\documentclass[10pt]{amsart}

\usepackage{color}
\usepackage{amssymb, amscd, amsmath, amsthm,  graphicx, latexsym}
\usepackage{xcolor}
\usepackage[makeroom]{cancel}
\usepackage{multicol}
\usepackage{hyperref}
\usepackage{comment}
\usepackage[T1]{fontenc}
\usepackage{tikz}
\usetikzlibrary{calc,matrix}

\newtheorem{theorem}{Theorem}[section]

\newtheorem{lemma}[theorem]{Lemma}
\newtheorem{corollary}[theorem]{Corollary}
 \newtheorem{manualtheoreminner}{Theorem}
\newenvironment{manualtheorem}[1]{%
  \IfBlankTF{#1}
    {\renewcommand{\themanualtheoreminner}{\unskip}}
    {\renewcommand\themanualtheoreminner{#1}}%
  \manualtheoreminner
}{\endmanualtheoreminner}

\newtheorem{definition}[theorem]{Definition}

\newtheorem{remark}[theorem]{Remark}

\newcommand{\x}{~}
\usepackage{enumitem}
\usepackage{blkarray, relsize}
\makeatletter
\newcommand{\setBAcolsep}[1]{\BA@colsep=#1}
\makeatother

\newcommand*\deltaD[1]{\Delta^D_{#1}}
\newcommand*\deltaC[1]{\Delta^C_{#1}}

\newcommand*\PD[1]{P'(#1)}
\newcommand*\PC[1]{P(#1)}
\newcommand*\pD[1]{p'_{#1}}
\newcommand*\pC[1]{p_{#1}}

\title{There and Back Again with Kaprekar 2-cycles}

\author{Rebekah Mayne}
\author{Quinn Shapiro}
\author{Cornelia A. Van Cott}

\begin{document}

\begin{abstract}
    A curious property of integers, first observed by  D.~R. Kaprekar in 1949, is as follows: write the digits of a 4-digit number in both descending and ascending order, and then find the positive difference between these integers. Iterating this process on any 4 digit number -- except for multiples of 1111 -- eventually produces the number 6174. The process ends here, since 6174 is a fixed point of the procedure. In general, if we start with any integer and iterate this process, it either ends at a fixed point (as with 4 digits) or enters a cycle of numbers, called a Kaprekar cycle. In 2011, Dolan classified all fixed points of this process. We study Kaprekar cycles of length 2. We find general properties of numbers in a Kaprekar 2-cycle, and we classify all 2-cycles such that the two integers have the same smallest digit. 
\end{abstract}
\maketitle

\section{Introduction}
In 1949, D.~R.~Kaprekar, an Indian mathematician who spent more than 40 years studying recreational number theory, observed a curious property of positive integers. Start with a positive integer $N$ with $n$ digits. Let $\overline{N}$ and $\underline{N}$ denote the numbers obtained by writing the digits of $N$ in descending order and ascending order, respectively. Compute $k(N) = \overline{N} - \underline{N}$. By padding $k(N)$ with zeros in front if necessary, $k(N)$ is another $n$ digit number, and so we can repeat the process again with $k(N)$. This is called the Kaprekar process.

Kaprekar observed that when this process is iterated with any four digit number not divisible by 1111, the process eventually yields the four digit number 6174~\cite{kaprekar}. Moreover, observe that $k(6174) = 7641 - 1467 = 6174$. Therefore the process terminates at 6174. For example, beginning with $9542$, we compute:
\begin{align*}
k(9542) & = 9542 - 2459 = 7083\\
k(7083) & = 8730 - 0378 = 8352\\
k(8352) & = 8532 - 2358 = 6174
\end{align*}
In the above example, we reached the value 6174 in three steps. In general, 4-digit numbers reach 6174 in at most seven steps~\cite{devlinzeng}. This phenomenon also occurs when the initial number has 3 digits. In this case, the process terminates at 495 in at most six steps~\cite{eldridge}. 

More generally, the process terminates in one of a collection of so-called {\bf Kaprekar cycles} of length 1 or more~\cite{prichett}. With two-digit numbers, the process ends in the 5-cycle $(90, 81, 63, 72, 54)$. In five-digit numbers, Kaprekar~\cite{kaprekar} found that the process terminates in one of three different cycles: $(53955, 59994)$, $(61974, 82962, 75933, 63954),$ or $(62964, 71973, 83952, 74943).$ With six-digit numbers, the process either ends in the 1-cycle $549945$, the 1-cycle $631764$, or the 7-cycle:
$$
(420876, 851742, 750843,  840852, 860832, 862632, 642654).
$$

In 2011, Stan Dolan classified all Kaprekar cycles of length 1 (that is, positive integers $C$ such that $k(C)=C$, also called {\bf Kaprekar constants})~\cite{dolan}. He found five infinite families of Kaprekar constants. We already mentioned four examples: 495, 6174, 549945, and 631764. The full list is in Theorem~\ref{dolan} below. A comment on notation is needed. The notation, say, $6_x$, denotes $x$ copies of the digit 6. 
\begin{theorem}\cite{dolan}\label{dolan}
    If $C$ is a Kaprekar constant, then $\overline{C}$ is one of the following:
    \begin{enumerate}
        \item $76_{x+1}43_x1$, for $x\geq 0$;
        \item $9_z5_z4_z$, for $z>0$;
        \item $9_z8_z7_z6_{z+y}5_z4_z3_{z+y}2_z1_z$, for $z>0$, and $y\geq 0$;
        \item $9_{z+a}8_{z+x}7_{z+x}6_{z+y}5_{z+x}4_{z+x}3_{z+y}2_{z+x}1_{z+x}0_a$, for $a, x>0$ and $y, z\geq 0$;
        \item $9_z8_{z-2y}7_{z+y}6_{z-y}5_z4_z3_{z-y}2_{z+y}1_{z-2y}$, for $z>2y>0$.
    \end{enumerate}
\end{theorem}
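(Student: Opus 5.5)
Writing $\overline{C} = d_1 d_2 \cdots d_n$ with $d_1 \ge d_2 \ge \cdots \ge d_n$, the plan is to compute the digits of $k(C) = \overline{C} - \underline{C}$ column by column and then impose the fixed-point condition: the multiset of digits of $k(C)$ must equal $\{d_1,\dots,d_n\}$. The subtraction is determined by the differences $e_i = d_i - d_{n+1-i}$ for $i \le \lfloor n/2\rfloor$, which satisfy $9 \ge e_1 \ge e_2 \ge \cdots \ge 0$.

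First I compute the digits of $k(C)$ from the $e_i$. The borrow pattern is rigid. In the right half of the subtraction, position $n+1-i$ subtracts $d_i$ from $d_{n+1-i}$. That column borrows exactly when $e_i > 0$, and it receives a borrow from the column to its right.

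Let $m$ be the number of $i$ with $e_i > 0$.
\begin{itemize}
\item For $i \le m$, the digits of $k(C)$ are $e_1$, then $e_i - 1$ for $2 \le i \le m$.
\item For $m < i \le n/2$ (where $e_i = 0$ in the left half), the digits are $9$.
\item The right half gives $10 - e_1$, then $9 - e_i$ for $2 \le i \le m$, and $9$ or $0$ for the remaining positions.
\item If $n$ is odd, the middle digit is $9$ or $4$, according to the borrow.
\end{itemize}
Getting this bookkeeping right gives an explicit multiset $M(e_1,\dots,e_m)$.

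The next step is the degenerate cases. If some $e_i = 0$ with $i \le n/2$, that position contributes both $9$s and $0$s. Combined with the fact that the digits of $C$ are sorted, this forces $d_1 = 9$ and the presence of $0$s, which is the source of family (4) with its $9_{z+a}\cdots 0_a$ shape. Otherwise every $e_i > 0$. In that case the multiset is
\[
\{e_1,\ 10-e_1\} \cup \{e_i - 1,\ 9 - e_i : 2 \le i \le n/2\},
\]
plus the middle digit $4$ when $n$ is odd. Note that these pairs sum to $10$ or $8$.

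The core of the argument is to match this multiset against $\{d_i\}$ while respecting the link $e_i = d_i - d_{n+1-i}$. I would introduce the multiplicities $c_j = \#\{i : e_i = j\}$ for $j = 1,\dots,9$. Both the digit counts of $C$ and the digit counts of $k(C)$ are then linear functions of the $c_j$, the first through the structure $d_i = d_{n+1-i} + e_i$ with sorted digits. Equating the counts of each digit $0$ through $9$ gives a linear system in the $c_j$ over the nonnegative integers.

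I expect this matching to be the main obstacle. The difficulty is recovering the actual $d_i$ from the multiset, since the $d_i$ are sorted and the $e_i$ must be exactly the pairwise differences. My approach is to determine the digits greedily from the extremes. The largest digit $d_1$ must be the maximum of $M$, namely $\max(e_1, 10 - e_1)$ or a $9$. The smallest digit $d_n$ must be the minimum of $M$. Then $e_1 = d_1 - d_n$ gives a consistency equation. This yields finitely many branches; in each branch one peels off the outer pair and recurses inward. The recursion is governed by how many times each value of $e$ repeats, which is why block lengths like $z$, $z \pm y$, $z - 2y$ appear.

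Solving the resulting linear constraints on the block lengths in each branch should produce exactly families (1)--(5):
\begin{itemize}
\item the odd-length case with middle digit $4$ gives (1) and (2);
\item the all-positive even case gives (3) and (5);
\item the case with zeros gives (4).
\end{itemize}
Small $n$ would be checked directly.
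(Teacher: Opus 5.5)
\emph{This paper does not prove the theorem.} It quotes it from Dolan~\cite{dolan}; its Theorem~\ref{smallest} and Lemma~\ref{matrix} are modeled on Dolan's Lemmas~3 and~5. So your plan has to stand on its own, and it breaks at the first step: the formula for $k(C)$ is wrong. Once $e_1>0$ the rightmost column borrows. Then every column of the right half, and the middle column, borrows too: a column with $e_i=0$ that receives a borrow writes $9$ and passes the borrow on. So ``a column borrows exactly when $e_i>0$'' is false. The borrow is absorbed only in the left half, at column $m$. Hence
\[
k(C)=e_1\,e_2\cdots e_{m-1}\,(e_m-1)\,9_{n-2m}\,(9-e_m)\cdots(9-e_2)\,(10-e_1),
\]
read as $(e_1-1)\,9_{n-2}\,(10-e_1)$ when $m=1$. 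This corrects your bookkeeping in four places:
\begin{enumerate}
\item only the $m$th digit is decremented;
\item the middle digit for odd $n$ is always $9$, never $4$;
\item positions with $e_i=0$ contribute two $9$'s, never $0$'s;
\item the complementary pairs have sums $10,9,\dots,9,8$, not $10$ followed by all $8$'s.
\end{enumerate}
Your multiset already rejects a genuine constant. For $631764$ (family (1), $x=1$) we have $\overline{C}=766431$ and $e=(6,3,2)$, and your rule gives $\{7,6,6,4,2,1\}$ instead of $\{7,6,6,4,3,1\}$. Your case split is also misassigned. Family (1) has even length $2x+4$ and $n=2m$. The case ``some $e_i=0$'' (that is, $n>2m$) is exactly where the leading $9_z$ of families (2), (3), (5) comes from, since their central block $5_z$ pairs with itself. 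Family (4) with $z=0$, e.g.\ $98754210$, has every $e_i>0$; its $0$'s come from $\{9,0\}$ pairs and from $e_m=1$.

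Even with the correct formula, the core step is asserted rather than proved. Your greedy step misidentifies the extremes: for $7641$ the largest digit is $7=9-e_2$, not $\max(e_1,10-e_1)=6$ and not $9$. More seriously, the $c_j$ determine the digit multiset of $k(C)$ but not the digits of $C$, since $\{9,1\}$ and $\{8,0\}$ have the same difference. So ``both digit counts are linear in the $c_j$'' is false.

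The real fixed-point condition is that the corresponding-pair differences of the sorted multiset of $k(C)$ reproduce $(e_i)$. Computing those requires knowing how the complementary pairs of $k(C)$ sit inside $\overline{C}$. When $C$ has no $9$, they are the corresponding pairs (after checking that the $10$-pair and the $8$-pair do not interleave, cf.\ Lemma~\ref{inpairs}). In that case your $c_j$-system is precisely Dolan's $P(i)$ bookkeeping, reused here in Lemma~\ref{equations} and Theorem~\ref{no2cycles}, and it yields family (1). But when $\overline{C}$ starts with $n-2m>0$ surplus $9$'s, those $9$'s pair with the smallest digits and shift every corresponding pair against the complementary pairs. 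The new differences are then no longer functions of the $c_j$. ``Peel off the outer pair and recurse'' gives neither an invariant for the recursion nor a reason the branching closes up into finitely many parametric families.

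Two ingredients are missing:
\begin{enumerate}
\item The smallest-digit lemma (cf.\ Theorem~\ref{smallest}). For a constant containing a $9$ it forces $e_1+e_m=10$, hence the form~\eqref{CD} of Lemma~\ref{pairs}, in which all complementary pairs other than the surplus $9$'s sum to $9$.
\item The $5\times5$ matrix counting corresponding pairs of each type $\{i-1,10-j\}$ (cf.\ Lemma~\ref{matrix}). It has prescribed row and column sums and the nesting condition $m_{ij}m_{pq}=0$ whenever $(p-i)(q-j)<0$.
\end{enumerate}
The nonnegative integer solutions of that linear system are what produce block lengths such as $z+y$, $z-y$, $z-2y$.
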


The purpose of our work is to study Kaprekar 2-cycles in base 10. That is, we study pairs of distinct integers $\{C,D\}$ such that $k(C) = D$ and $k(D) = C$. A 2-cycle with 5 digits was already mentioned above $\{53955, 59994\}$. Through a computer search, we discovered infinitely many Kaprekar 2-cycles, which divide into three distinct families. These pairs of integers can be readily checked to be Kaprekar 2-cycles.
\begin{theorem}\label{MainResult}
The following are three families of Kaprekar 2-cycles:
\begin{enumerate}
    \item $\{95553, ~99954\}$;
    \item $\{9_{2}6_{x+1}5_{2}4_{2}3_{x}2_{1}1_{1},  9_{1}8_{2}7_{2}6_{x}3_{x}2_{3}0_{1}\},$ for $x\geq 2$;
    \item 
  $\{9_z8_{z-v-u}7_{z+v}6_{z-v}5_z4_z3_{z-v}2_{z+v}1_{z-v-u},~\\
    \phantom{\{} 9_z8_{z-v-u}7_{z+u}6_{z-u}5_z4_z3_{z-u}2_{z+u}1_{z-v-u}\},$ 
   \\ where $u$, $v$, and $z$ are nonnegative integers, $z>u+v>0$, and $u\neq v.$
   
\end{enumerate}
\end{theorem}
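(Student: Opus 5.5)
Since $k(N)$ depends only on the multiset of digits of $N$, I would read each pair in the statement as a pair of multisets of digits, written in descending order, as in Theorem~\ref{dolan}. The integer that actually occurs in the cycle is a permutation of that string. Family~(1) shows why this reading is needed: the set $\{95553, 99954\}$ is really $\{95553, 59994\}$, since $k(95553) = 95553 - 35559 = 59994$, whose descending arrangement is $99954$, and $k(59994) = 99954 - 45999 = 53955$, whose descending arrangement is $95553$. So the claim to prove, for each pair $\{C,D\}$ of descending strings, is that $\overline{k(C)} = D$ and $\overline{k(D)} = C$. The numbers forming the cycle are then $k(C)$ and $k(D)$. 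I would check family~(1) by the hand computation above.

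For families (2) and (3) I would do the subtraction $\overline{N}-\underline{N}$ block by block, column by column. Write the descending string $a_1 \ge a_2 \ge \cdots \ge a_n$, so that column $i$ (counted from the left) computes $a_i - a_{n+1-i}$ minus a possible borrow. The first step is to list the cut points of the descending blocks and of the reversed (ascending) blocks. These are the partial sums of the block lengths; for family~(3) they are, e.g., $z,\ 2z-u-v,\ 3z-u,\ 4z-2v-u,\dots$ against $z-u-v,\ 2z-u,\ 3z-u-v,\dots$. Merging the two lists splits the number into segments on which both the top digit and the bottom digit are constant. On each segment every column produces the same output digit, and the segment contributes that digit with multiplicity equal to its length.

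The borrows are governed by one standard fact. A borrow into column $i$ occurs exactly when some column to its right has $a_j < a_{n+1-j}$ and every column between has equality. In the left half $a_i \ge a_{n+1-i}$, and in the right half the inequality reverses. Consequently only the segment boundaries can alter the pattern: the rightmost column, the middle, and the first strictly positive column from the right. I would determine these few exceptional columns and then tally the output digits block by block.

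For family~(3) the symmetry of the statement halves the work. Interchanging $u$ and $v$ swaps $C$ and $D$, so it suffices to prove $\overline{k(C_{u,v})} = C_{v,u}$ for all admissible $u,v,z$; the other direction is the same identity with $u$ and $v$ interchanged. The relative order of the cut points depends on the sign of $u-v$, and possibly on whether $u$ or $v$ is $0$, so I would split into cases such as $u>v$ and $u<v$. The hypothesis $z>u+v$ guarantees that every block has positive length and that the ordering of the cut points is the one expected. Family~(2) is handled the same way with the single parameter $x\ge 2$; here the lone $0$ in $D$ and the short blocks $2_1 1_1$ and $0_1$ are the places where borrows cross blocks.

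The main obstacle is the bookkeeping at segment boundaries. An off-by-one error in a borrow shifts a single digit between adjacent values (say a $6$ becoming a $5$), and that breaks the multiset equality. I would first verify the tallies numerically on small instances, for example $x=2,3$ and $(u,v,z)=(1,0,2),(0,1,2),(2,1,4)$. Then I would write the general tally as a table of segments with lengths and output digits, and check that the multiplicities sum to the claimed block lengths $z,\ z-u-v,\ z+u,\ z-u,\dots$.
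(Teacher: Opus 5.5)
Your proposal is correct and is essentially the paper's own argument: the paper only asserts that these pairs, read as you read them (as descending arrangements $\{\overline{C},\overline{D}\}$), ``can be readily checked,'' and your segment-by-segment subtraction with the borrow rule is that check. For family (3) it goes through uniformly, since the cut points always order as $z-u-v<z<2z-u-v\le 2z-u<3z-u-v\le 3z-u<4z-u-v$ using only $0<z-u-v<z$ and $0\le v<z$, so no split on the sign of $u-v$ is needed; note that your fourth top cut should be $4z-u-v$, not $4z-2v-u$, and that the actual cycle in family (1) is $\{53955,59994\}=\{k(99954),k(95553)\}$, not $\{95553,59994\}$ (your later formulation, with cycle $\{k(C),k(D)\}$ distinct because $C\neq D$, already gets this right).
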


\begin{remark} Notice that if $u=v$ in Family (3) above, then $\overline{C}=\overline{D}$, and we have the Kaprekar constants in Family (5) of Theorem~\ref{dolan}.
\end{remark}
We find general properties of Kaprekar 2-cycles as follows.
\begin{theorem}\label{9}
   Let $\{C, D\}$ be a Kaprekar 2-cycle.
   \begin{enumerate}
   \item Except for the 2-cycle $\{53955, 59994\}$, the digit sum of $C$ equals that of $D$.
   \item Both $C$ and $D$ contain the digit 9.
\end{enumerate}
\end{theorem}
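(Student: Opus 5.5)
We set up notation for the differences. Let $C$ have $n$ digits, and write the digits of $\overline{C}$ as $c_1\ge c_2\ge\cdots\ge c_n$. Then
\[
k(C)=\overline{C}-\underline{C}=\sum_{i=1}^{\lfloor n/2\rfloor}(c_i-c_{n+1-i})\bigl(10^{n-i}-10^{i-1}\bigr).
\]
So $k(C)$ is determined by the differences $\delta_i=c_i-c_{n+1-i}$, which satisfy $\delta_1\ge\delta_2\ge\cdots\ge 0$. We carry out the subtraction $\overline{C}-\underline{C}$ column by column with borrows. This gives the digits of $D=k(C)$ explicitly in terms of the $\delta_i$.

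Let $j$ be the last index with $\delta_j>0$. Reading from the right, position $n$ gives $10-\delta_1$. Positions $n-1,\dots$ give $9-\delta_i$ for $i\ge 2$, up to the middle where $\delta_i=0$ produces 9's (borrow propagation). On the left, position 1 gives $\delta_1$. Positions $2,\dots$ give $\delta_i-1$ up to index $j$, followed by the middle block of 9's. More precisely, the digit multiset of $D$ is
\[
\{\delta_1,\ \delta_2-1,\dots,\delta_j-1\}\ \cup\ \{9-\delta_2,\dots,9-\delta_j,\ 10-\delta_1\}\ \cup\ \{9,\dots,9\},
\]
where the number of 9's is $n-2j$. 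The boundary at the transition index needs care: the digit there is $\delta_j-1$ paired with $9-\delta_j$ on the other side. This is the standard formula used by Dolan.

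\textbf{Part (2).} The paired digits in symmetric positions sum as follows: $\delta_1+(10-\delta_1)=10$, and $(\delta_i-1)+(9-\delta_i)=8$ for $2\le i\le j$.

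If $n-2j>0$, then $D$ has a middle digit 9 and we are done.

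Otherwise $n=2j$, so all differences are positive. In that case we use the multiset above. The digit 9 appears iff some $\delta_i=9$ (giving $\delta_1=9$ or $9-\delta_i$ with $\delta_i=0$, which is excluded) or some $\delta_i=1$ with $i\ge2$, or $\delta_1=1$. Hence the condition is $\delta_1\in\{1,9\}$ or some $\delta_i=1$ for $i\ge 2$ (the latter gives $9-\delta_i=8$, not 9; so correct this). Since the cycle condition makes $C=k(D)$ as well, both numbers have this form.

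My plan here is to use the cycle. The largest digit of $D$ is $\max(\delta_1,10-\delta_1,9-\delta_2,\delta_2-1,\dots)$, and the smallest digit is the complementary quantity. The key step is to get $d_1-d_n=\delta_1(C\text{ side})$ of $D$ and show that it forces a 9 in $C=k(D)$. If $D$ has no middle 9's, its largest digit $M$ is $\max(\delta_1,10-\delta_1)\le 9$, which must then be balanced. I expect a short case check on $\delta_1$, using $\delta_1(D)=M-m$, to show that the largest difference of $D$ equals 9 or that $D$ has a zero difference giving a middle 9 in $C$. This finite case analysis on the first one or two differences is the main obstacle.

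\textbf{Part (1).} The digit sum of $D$ is
\[
10+8(j-1)+9(n-2j)=9n-10j+2.
\]
So $s(D)$ depends only on $n$ and on $j(C)$, the number of nonzero differences of $C$. Similarly, $s(C)=9n-10j(D)+2$.

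Therefore $s(C)=s(D)$ iff $j(C)=j(D)$. To compare these, observe that $j(D)$ is determined by how many symmetric pairs of sorted digits of $D$ coincide. This is computable from the multiset: values of $D$'s digits are $9$ (with multiplicity $n-2j$ plus extras), the pairs $\{\delta_1,10-\delta_1\}$, and $\{\delta_i-1,9-\delta_i\}$. Equal sorted symmetric digits only occur in the middle block of identical values.

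I would show that when $n-2j(C)\ge 1$ and the sorted digits of $D$ produce a middle constant run, its length matches unless the digit counts are very small. Then I would reduce to a finite bounded search: the exceptional situations force $n\le 5$, where direct enumeration leaves only $\{53955,59994\}$. The main difficulty in both parts is controlling where the 9-block sits after sorting; I would handle it by splitting into cases $\delta_1\ge5$ versus $\delta_1<5$.
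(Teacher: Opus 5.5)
\textbf{Setup and part (1).} Your digit formula for $k(C)$ is wrong. Borrows are generated only in the right half and in the central block, so the left half of $D$ reads $\delta_1,\delta_2,\dots,\delta_{j-1},\delta_j-1$: only the $j$th digit is decremented. Hence the symmetric pairs of $D$ sum to $10,9,\dots,9,8$, and the digit sum of $D$ is $9(n-j)$, not $9n-10j+2$. For example, $k(631764)=766431-134667=631764$ has second digit $\delta_2=3$ and digit sum $27$, where your formula predicts $2$ and $26$.

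The reduction ``equal digit sums iff $\ell_c=\ell_d$'' survives, but nothing after it is actually argued. The claims ``its length matches unless the digit counts are very small'' and ``the exceptional situations force $n\le 5$'' are essentially the theorem itself. The missing idea is a way to read $\ell_d$ off the digits of $D$. The paper gets this from Lemma~\ref{inpairs}: after discarding the $n-2\ell_c$ central 9's, the symmetric pairs of $D$ (sums $10,9,\dots,9,8$) reappear as the rainbow pairs of $\overline{D}$, or else all rainbow pairs sum to $9$. Proving this already needs the location of the smallest digit (Theorem~\ref{smallest}) and $\Delta_1\neq\Delta_\ell$ (Lemma~\ref{notallsame}).

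With that lemma, suppose $\ell_c>\ell_d$. Then the central run of $\overline{D}$ has length at least $n-2\ell_c+2$, so its digit forms a rainbow pair with itself, and that pair's sum must be $8$ or $10$. So the digit is $4$ or $5$, and a case analysis excludes both. The case $\ell_c=1$, which is where $\{53955,59994\}$ lives, is handled separately by counting 9's (Theorem~\ref{l=1}), not by a bounded search.

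\textbf{Part (2).} Your sketch cannot work as described, because it never uses $C\neq D$, and any proof must. The pair $\{6174,6174\}$ satisfies $k(C)=D$ and $k(D)=C$ yet contains no 9, and the same holds for every Kaprekar constant with $\overline{C}=76_{x+1}43_x1$. Concretely, for $D=6174$ your proposed dichotomy (the largest difference in $\overline{D}$ is $9$, or $\overline{D}$ has a zero difference) fails. So does your claim that, without a central block, the largest digit of $k(C)$ is $\max(\delta_1,10-\delta_1)$: here it is $7=9-\delta_2$. A local check on the first one or two differences is therefore hopeless.

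The paper proceeds in two stages. First, it uses $\ell_c=\ell_d$ from part (1) and Lemma~\ref{morethan2} to show that if one of $C,D$ contains 9, then so does the other (Theorem~\ref{9inboth}). Second, it excludes 9-free 2-cycles by a global count. With no 9 present, symmetric pairs coincide with corresponding pairs (Lemma~\ref{not10}). So the numbers $P(i)$, $P'(i)$ of corresponding pairs with difference $i$ satisfy the linear relations of Lemma~\ref{equations} and their mirror images. These force $\overline{C},\overline{D}\in\{86_x53_{x+1}2,\,76_{x+1}43_x1\}$. Only at this point does distinctness enter: the first maps to the second, which is a fixed point.
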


We prove Theorem ~\ref{9}(1) in Section~\ref{4} (see Theorem~\ref{sum}), while Theorem ~\ref{9}(2) is proved in Theorem~\ref{no2cycles}. In addition to these properties, we find the location of the smallest digit in a Kaprekar 2-cycle. This statement requires a bit of notation, so we delay stating it until Theorem~\ref{smallest}. Finally, in Section~\ref{9+samesmallest}, we prove that the Kaprekar 2-cycles in Family (3) from Theorem~\ref{MainResult} are the only 2-cycles for which $C$ and $D$ have the same smallest digit.

\begin{theorem}\label{familyA}
    If $\{C,D\}$ is a Kaprekar 2-cycle where $C$ and $D$ have the same smallest digit, then $\{\overline{C},\overline{D}\}$ is:
    $$ \{9_{z}8_{z-v-u}7_{z+v}6_{z-v}5_z4_z3_{z-v}2_{z+v}1_{z-v-u}, 9_{z}8_{z-v-u}7_{z+u}6_{z-u}5_z4_z3_{z-u}2_{z+u}1_{z-v-u} \} $$
    where $u$, $v$, and $z$ are nonnegative integers such that $z>u+v>0$ and $u\neq v.$
\end{theorem}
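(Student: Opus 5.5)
We follow the standard Dolan-type bookkeeping. Write $\overline{C}=9_{c_9}8_{c_8}\cdots 0_{c_0}$ and $\overline{D}=9_{d_9}\cdots 0_{d_0}$, both with $n$ digits. The subtraction $\overline{C}-\underline{C}$ pairs the $i$-th largest digit $a_i$ with the $i$-th smallest $a_{n+1-i}$. Reading from the middle outward, the digits of $D=k(C)$ come in complementary pairs: position $i$ gives $a_i-a_{n+1-i}$ (or minus one, if there is a borrow), and position $n+1-i$ gives $10-(a_i-a_{n+1-i})$ (or $9-\dots$). Hence the multiset of digits of $D$ is determined by the differences $\delta_i=a_i-a_{n+1-i}$ for $i\le n/2$, together with the single position where the borrow pattern changes. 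We then do the same for $C=k(D)$.

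\textbf{Step 1.} Use Theorem~\ref{9}. Both $C$ and $D$ contain a $9$, and (excluding $\{53955,59994\}$, whose smallest digits are $3$ and $4$, so it is irrelevant here) they have equal digit sums. The common smallest digit $m$ together with the largest digit $9$ gives the first difference $\delta_1=9-m$. This forces the outermost digit pair of $D$ to be $(9-m,\,m+1)$ up to the borrow correction. Comparing with $C$, which has the same extremes, we should get $m+1$ or $10-m$ as the smallest digit. Repeating the argument symmetrically should pin down $m$: I expect $m=1$, or else $m=0$ with extra $9$s, and the latter case would be ruled out since it leads to the family (4) of Theorem~\ref{dolan}, which is a fixed point.

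\textbf{Step 2.} Encode the differences as the $D$-counterpart of the $\Delta$-notation. Let $\deltaC{j}$ be the number of indices $i$ with $\delta_i=j$ in $C$, and similarly $\deltaD{j}$. Express each $d_j$ as a linear combination of the $\deltaC{j}$ (with a $\pm1$ correction at the borrow boundary), and each $c_j$ as a linear combination of the $\deltaD{j}$. The $\deltaC{j}$ are in turn determined by how the blocks of $\overline{C}$ overlap with the reversed blocks. Because the smallest digits agree and by Step 1, the nesting structure should be the same for $C$ and $D$: $9$s pair with $1$s, $8$s with $2$s, and so on. This turns the overlap counts into linear functions of the $c_j$, and the 2-cycle condition becomes a linear system in $c_1,\dots,c_9,d_1,\dots,d_9$.

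\textbf{Step 3.} Solve the system. The symmetry of complementary pairs should give $c_9=d_9=z$, $c_5=c_4=d_5=d_4=z$, $c_8=c_1$ and $d_8=d_1$, and the relations $c_7-c_2$, $c_6=c_3$, and so on. Introduce parameters $u,v$ measuring the deviations $d_7-z$ and $c_7-z$ to obtain the stated form. The conditions $z>u+v>0$ come from nonnegativity of the block lengths and from the requirement that $C$ and $D$ be distinct from constants. The condition $u\ne v$ comes from $C\neq D$, using the Remark after Theorem~\ref{MainResult}.

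\textbf{Main obstacle.} The hardest part will be Step 2, the case analysis on the ordering of block boundaries. One must decide where the middle of the number falls among the blocks and where the single borrow occurs, and different orderings give different linear systems. I would try to use the equal digit sum from Theorem~\ref{9} and the forced extremes from Step 1 to eliminate all but one ordering quickly. The remaining orderings should be killed by showing that they give a negative block length or force $\overline{C}=\overline{D}$.
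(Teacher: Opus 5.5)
Your overall strategy (Dolan-style bookkeeping of block lengths followed by a case analysis) is in the right spirit. The step that fails is the claim in Step 2 that, because the smallest digits agree, the corresponding pairs nest as ``$9$s with $1$s, $8$s with $2$s, and so on,'' which would make the overlap counts linear in the block lengths. The target family itself refutes this. Take $\overline{C}=9_z8_w7_{z+v}6_{z-v}5_z4_z3_{z-v}2_{z+v}1_w$ with $w=z-u-v$. Then $\ell=3z+w$, and the corresponding pairs are
\begin{itemize}
\item $\{9,1\}$ ($w$ times), $\{9,2\}$ ($u+v$), $\{8,2\}$ ($w$), $\{7,2\}$ ($v$),
\item $\{7,3\}$ ($z-v$), $\{7,4\}$ ($v$), $\{6,4\}$ ($z-v$).
\end{itemize}
There are $z$ nines among the first $\ell$ digits but only $w<z$ ones. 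So a linear system built on your pairing excludes exactly the solutions you are trying to find. The overlap counts are genuinely piecewise linear, so the ordering-of-boundaries analysis you postpone to the ``main obstacle'' paragraph is the whole proof, and you give no tool for carrying it out. Likewise, $c_5=c_4=z$ in Step 3 is not a consequence of symmetry, which only gives $c_5=c_4$. It says there are no $\{4,5\}$ corresponding pairs, and that has to be proved.

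Step 1 also misses the input that makes the problem tractable. By Theorem~\ref{smallest}, the $\ell$-th digit of $D$ must equal the common smallest digit $m$, so $m=\Delta^C_\ell-1$. Also $\Delta^C_1=9-m$, because $C$ contains a $9$. Hence $\Delta^C_1+\Delta^C_\ell=10$, likewise for $D$, and $\Delta^C_1=\Delta^D_1$, $\Delta^C_\ell=\Delta^D_\ell$. Lemma~\ref{inpairs}(2) then makes all rainbow pairs sum to $9$, which gives the normal forms $\overline{C}=9_{a+z}8_b7_c6_d5_e4_e3_d2_c1_b0_a$ and $\overline{D}=9_{a+z}8_f7_g6_h5_i4_i3_h2_g1_f0_a$.

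Your guess $m\in\{0,1\}$, with $m=0$ dismissed by appeal to Dolan's family (4), is unsupported. The paper never fixes $m$ in advance; other values are eliminated only inside the case analysis (e.g.\ $m=2$ via Theorem~\ref{smallest} again when $z=u+v$).

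What replaces your fixed pairing is the pair of lower-triangular $5\times5$ matrices $M_C,M_D$, whose $(i,j)$ entry counts corresponding pairs $\{i-1,10-j\}$. They satisfy four constraints:
\begin{itemize}
\item row sums give the counts of the small digits;
\item column sums give the counts of the large digits outside the central block, which is a block of $5$s once $e,i>0$ (Lemma~\ref{45});
\item the entries regroup as $b=b_1+b_2$ etc., tying digit counts of one number to pair differences of the other;
\item nesting forces $m_{ij}m_{pq}=0$ whenever $(p-i)(q-j)<0$.
\end{itemize}
The nesting constraint, together with the two identities of Lemma~\ref{formulas}, is what eliminates the wrong configurations in the case split on $e_3,i_3,g_2,c_2$. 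Without the normal form and a device of this kind, your outline has no way to cut the orderings down.
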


The complete classification of Kaprekar 2-cycles $\{C, D\}$ has since been proven by Dolan~\cite{dolan0}. He proved that every Kaprekar 2-cycle belongs to one of the three families in Theorem~\ref{MainResult}.

\subsubsection*{Acknowledgments} We thank Stan Dolan for his helpful correspondence and comments along the way.

\section{Notation for Kaprekar 2-cycles}\label{section2}
We denote the digits of integers in a Kaprekar 2-cycle $\{C,D\}$ as follows:
\[
\overline{C} = y_1~y_2~\cdots~y_n~
\text{ and } 
~\overline{D} = z_1~z_2~\cdots~z_n.
\]
For any integer $\overline{C}$, we denote by $\ell$ the maximal integer such that $y_\ell > y_{n+1-\ell}$. Thus all digits strictly between $y_\ell$ and $y_{n+1-\ell}$ (that is, the middle $n-2\ell$ digits of $\overline{C}$) are equal. 
The values of $\ell$ for the numbers in the Kaprekar 2-cycle $\{53955, 59994\}$ are 1 and 2, respectively. This 2-cycle is special. We prove in Corollary~\ref{ell} that the value of $\ell$ is identical for elements of a Kaprekar 2-cycle with more than 5 digits. Until then, we distinguish these values corresponding to $C$ and $D$ via $\ell_c$ and $\ell_d$, respectively. 

The difference $k(C) = \overline{C} - \underline{C}$ naturally pairs the $i^{th}$ and the $(n+1-i)^{th}$ digit of $\overline{C}$.  For $1\leq i \leq \ell_c$, we call these pairs of digits $y_i$ and $y_{n+1-i}$ {\bf corresponding pairs} of $\overline{C}$. Denote the difference of corresponding pairs for $C$ and $D$ via $\deltaC{i} = y_i - y_{n+1-i}$ and $\deltaD{i} = z_i - z_{n+1-i}$, respectively. Denote the center $n-2\ell_c$ digits of $\overline{C}$ by $x$. If $\{C,D\}$ is a Kaprekar 2-cycle, then the digits of $k(C) = D$ are as follows:
    \begin{equation}\label{D}
    \renewcommand{\arraystretch}{1.3}
    \setlength\arraycolsep{0.8pt}
    \begin{matrix}
        ~ & \overline{C} = & y_1 & y_2 & \cdots & y_{\ell_c} & x \cdots x & y_{n+1-\ell_c} & \cdots & y_{n-1} & y_n  \\
        - & \underline{C}= & y_n & y_{n-1} & \cdots &  y_{n+1-\ell_c} & x \cdots x &  y_{\ell_c} & \cdots  & y_2 & y_1 \\
        \hline
        ~ & D= & \deltaC{1} & \deltaC{2} & \cdots &(\deltaC{\ell_c}-1) & \underbrace{9 \cdots 9}_{n - 2\ell_c} &(9-\deltaC{\ell_c}) & \cdots & (9-\deltaC{2}) & (10 - \deltaC{1})&
    \end{matrix}
    \end{equation}
Similarly, the digits of $C$ are:
\begin{equation}\label{C}
C = \deltaD{1}  \deltaD{2}  \cdots (\deltaD{\ell_d}-1)  \underbrace{9 \cdots 9}_{n - 2\ell_d} (9-\deltaD{\ell_d})  \cdots  (9-\deltaD{2})  (10 - \deltaD{1}). \end{equation}

\section{A special case for Kaprekar 2-cycles}
The Kaprekar 2-cycle $\{53955, 59994\}$ has several unique properties. The following theorem gives one of these defining characteristics.

\begin{theorem}\label{l=1}
Let $\{C,D\}$ be a Kaprekar 2-cycle. If $\ell_c=1$, then $C=53955$ and $D= 59994$.
\end{theorem}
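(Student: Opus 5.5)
The plan is to use the explicit digit formulas \eqref{D} and \eqref{C} twice: once to compute $D$ from $C$, and once to compute $C$ back from $D$. The resulting digit multiset of $C$ must be compatible with $\ell_c=1$. Throughout, write $d=\deltaC{1}=y_1-y_n\geq 1$.

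\emph{Step 1: determine $D$.} Since $\ell_c=1$, formula \eqref{D} gives
$D=(d-1)\,9_{n-2}\,(10-d)$.
Its two non-forced digits satisfy $(d-1)+(10-d)=9$. Let $a\ge b$ denote these two digits, so $a+b=9$ and $b\le 4$. Then $\overline{D}=9_{n-2}\,a\,b$.
If $b=0$, then $a=9$ and $D=9_n$. This is a multiple of $11\cdots1$, so $k(D)=0$, which is not a 2-cycle. Hence $1\le b\le 4$ and $5\le a\le 8$.

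\emph{Step 2: determine $\ell_d$ and $C$.} Assume first $n\geq 4$. Then $z_1=9>b=z_n$ and $z_2=9>a=z_{n-1}$. All remaining middle digits equal $9$, so $\ell_d=2$, with $\deltaD{1}=9-b$ and $\deltaD{2}=9-a=b$. Formula \eqref{C} then gives
$C=(9-b)\,(b-1)\,9_{n-4}\,(9-b)\,(b+1)$.
So the digit multiset of $C$ is $\{9-b,\,9-b,\,b-1,\,b+1\}\cup\{9_{n-4}\}$.

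\emph{Step 3: impose $\ell_c=1$.} The condition $\ell_c=1$ means that after sorting, all digits other than the largest and smallest are equal to a common value $x$. Since $1\le b\le 4$, the four digits $9-b,9-b,b-1,b+1$ are all different from $9$.
If $n\ge 6$, then at least two nines are present. The largest is $9$, so some middle digit is also $9$, which forces $x=9$. But then at most one non-nine digit (the minimum) can occur, contradicting the four non-nines.
Hence $n=5$, and the multiset is $\{9,\,9-b,\,9-b,\,b-1,\,b+1\}$. The middle three sorted digits must be equal, so either $b+1=9-b$ or $b-1=9-b$.
The second gives $b=5>4$, which is impossible. The first gives $b=4$, so $a=5$ and the multiset is $\{9,5,5,5,3\}$. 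Then $d=9-3=6$, and indeed $\{d-1,10-d\}=\{5,4\}=\{a,b\}$ is consistent. We get $C=53955$ from the formula, and $D=(d-1)999(10-d)=59994$.

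\emph{Step 4: small cases $n\le 3$.} These are checked directly from Step 1.
For $n=2$, $D=(d-1)(10-d)$ and $C=k(D)$ lie in the known 5-cycle, so no 2-cycle exists.
For $n=3$, $\overline D=9ab$ gives $k(D)=99\cdot(9-b)$. The iteration is known to terminate at the fixed point $495$, and a short check of $b=1,\dots,4$ shows no 2-cycle.

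\emph{Main obstacle.} The delicate point is getting $\ell_d$ right in Step 2. One must confirm that $z_2>z_{n-1}$ genuinely holds, i.e.\ that $a<9$, and that no degenerate digit such as $b-1<0$ or the case $b=0$ sneaks in. Once $\ell_d=2$ is secured, the counting argument on nines in Step 3 is mechanical.
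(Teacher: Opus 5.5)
Your route is the paper's. You compute $D$ from Equation~\ref{D}, then compute $C$ back from $\overline{D}=9_{n-2}\,a\,b$. Your formula $C=(9-b)(b-1)\,9_{n-4}\,(9-b)(b+1)$ is exactly the paper's $(9-s)(8-r)\,9\cdots9\,r\,(s+1)$ with $r=a$, $s=b$. The count of nines that forces five digits is also the same. Parametrizing by $b$ and letting $\ell_c=1$ act on the digit multiset is a tidy replacement for the paper's detour through $y_1=9$ and its case split on $y_n$. However, two steps are not right as written.

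\textbf{The case $b=0$.} Here $d=1$ and $D=(d-1)\,9_{n-2}\,(10-d)=0\,9_{n-1}$, not $9_n$. Since $D$ contains a $0$, we get $\overline{D}=9_{n-1}0$ and $k(D)=8\,9_{n-2}\,1\neq 0$. The case is still impossible: $d=1$ forces every digit of $C$ into $\{y_n,y_n+1\}$, while $C=k(D)$ contains both $8$ and $1$. But the reason you give is false.

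\textbf{The case $n=4$.} This is the more substantive gap: $n=4$ is never treated. Step 2 assumes $n\ge 4$, Step 3 rules out $n\ge 6$, and you conclude ``hence $n=5$'', while Step 4 only covers $n\le 3$. Your multiset test does not kill this case. For $n=4$ and $b=4$ you get $C=5355$, $\overline{C}=5553$, which does have $\ell_c=1$. You must use compatibility with $d$: here $d=5-3=2$, so $D=1998$ and $\{d-1,10-d\}=\{1,8\}\neq\{5,4\}=\{a,b\}$. Indeed $k(1998)=8082\neq 5355$. So the check you present as a mere confirmation for $n=5$ is doing real work for $n=4$ and must be carried out there. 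Alternatively, do as the paper does: invoke the known behavior of numbers with at most four digits (convergence to $6174$, to $495$, or into the $2$-digit $5$-cycle), which rules out $2$-cycles with $n\le 4$ outright.
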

\begin{proof}
     Let $n$ denote the number of digits in $C$ and $D$. Observe that $n\geq 5$, since there are no Kaprekar 2-cycles with 4 or fewer digits. Since $\ell_c=1$, the digits of $\overline{C}$ can be expressed as
     \begin{equation}\label{C-second-eq}\overline{C} = y_1~\underbrace{x\cdots x}_{m}~y_n,\end{equation} 
     where $m=n-2$, $y_1\geq x \geq y_n$, and $y_1>y_n$.
  By Equation~\ref{D}, we have 
 $$D=\overline{C} - \underline{C} =(y_1-y_n-1) \underbrace{9\cdots9}_m  (10+y_n-y_1).$$
So then, $\overline{D} = \underbrace{9\cdots 9}_{m}rs$, where $\{r,s\}=\{ y_1-y_n-1, 10+y_n-y_1\}$ and $r>s$. Note that $r\neq s$, since that would imply  $2(y_1-y_n) = 11$. We can compute $\overline{D} - \underline{D}$ in terms of $r$ and $s$ for all but the first two digits as follows. 
\begin{equation}\label{subtract}
\renewcommand{\arraystretch}{1.2}
\setlength\arraycolsep{2pt}
\begin{matrix}
   ~ & \overline{D}= & 9 & \cancel{9}^{8} &
   \begin{matrix}
     \cancel{9} & \cdots & \cancel{9}
   \end{matrix}  
   & \cancel{r}^{r+9} & \cancel{s}^{s+10} \\
   - & \underline{D} = & s & r \phantom{^{8}} & 
   \begin{matrix}
         9 & \cdots & 9 
   \end{matrix} 
   & 9 \phantom{^{r+9}}& 9 \phantom{^{s+10}}\\
   \hline
   ~ & C= & ? & ? \phantom{^{8}} & 
   \underbrace{\begin{matrix}
       9  & \cdots & 9 
   \end{matrix}}_{m-2}
   & r \phantom{^{r+9}} & (s+1) \phantom{^{s}} 
\end{matrix}
\end{equation}
Since $m-2\geq1$, we conclude that 9 is a digit of $C$. Therefore, $y_1=9$, and the first and last digits of $D$ are $8-y_n$ and $1+y_n$, respectively. Moreover, observe that neither $8-y_n$ nor $1+y_n$ equals 9. For, if $y_n = 8$, then $C$ would either be $99\cdots 98$ or $988\cdots 8$, neither of which yields a Kaprekar 2-cycle. So, $y_n\leq 7$, and $D$ has exactly $m$ digits equal to 9. With this information, we complete the calculation in Equation~\ref{subtract} and find: 
\begin{equation}\label{C4}
C= (9-s) (8-r)  \underbrace{9 \cdots 9}_{m-2}r (s+1).
\end{equation}
We now consider two cases corresponding to the magnitude of $y_n$.

First,  suppose $4 \leq y_n \leq 7$.  In this case, it follows that $1+y_n > 8-y_n$, and so $\overline{D} = 9\cdots 9 (1+y_n)(8-y_n)$. Equation~\ref{C4} with $r = 1+y_n$ and $s = 8-y_n$ implies 
$$C = (1+y_n) (7-y_n) \underbrace{9 \cdots 9}_{m-2} (1+y_n) (9-y_n).
$$
We have a contradiction because none of the digits in this expression equals $y_n$.

Secondly, suppose $y_n \leq 3$. In this case, it follows that $8-y_n > 1+y_n$, and so $\overline{D} = 9\cdots 9 (8-y_n)(1+y_n)$. Equation~\ref{C4} with $r = 8-y_n$ and $s = 1+y_n$ yields 
\begin{equation}\label{C2}
    C = (8-y_n) (y_n) \underbrace{9 \cdots 9}_{m-2} (8-y_n) (2+y_n).
\end{equation}
On one hand, Equation~\ref{C-second-eq} implies that the total number of 9's in $C$ is either $1$ or $m+1$, depending on whether or not $x=9$. On the other hand, Equation~\ref{C2} implies that there must be exactly $m-2$ digits equal to 9. 
Putting the two perspectives together, it follows that $m=3$. Therefore,
$\overline{C} = 9~x~x~x~y_n$ and $C = (8-y_n)~ y_n~ 9~ (8-y_n)~ (2+y_n).$ Because $\overline{C}$ and $C$ have the same digits, it follows that $8-y_n=2+y_n$, and hence $y_n = 3$. Therefore $C = 53955$ and $D = 59994,$ as desired.
\end{proof}

Since $\{53955, 59994\}$ is the only 2-cycle with an integer such that either $\ell_c$ or $\ell_d$ is 1, we henceforth assume $\ell_c, \ell_d>1$, which implies $n>5$. In the following section, we find another property that makes the 2-cycle $\{53955, 59994\}$ unique. 

 \section{The sum of digits for integers in a Kaprekar 2-cycle}\label{4}
The central goal of this section is to prove that, except for the 2-cycle $\{53955, 59994\}$, the two numbers in Kaprekar 2-cycles have the same digit sum. In service of this goal, we begin by considering the location of the smallest digit in a Kaprekar 2-cycle. Our argument has the same structure as Lemma 3 in~\cite{dolan}.  

\begin{theorem}\label{smallest}
    Let $s$ be the smallest digit in the Kaprekar 2-cycle $\{C,D\}$. Then
    \begin{enumerate}
        \item $s$ appears the same number of times in $C$ and $D$, or
        \item $s$ appears exactly once in either $C$ or $D$ and not in the other.
    \end{enumerate}
    Moreover, if $s$ appears at least once as a digit in $D$, then $s$ must be the $\ell_c^{th}$ digit of $D$. The corresponding statement holds for $C$.
\end{theorem}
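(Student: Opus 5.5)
The plan is to read everything off the explicit digit description of $D=k(C)$ in Equation~\ref{D}, and its mirror image Equation~\ref{C}. We assume $\ell_c,\ell_d\ge 2$; the case $\ell_c=1$ or $\ell_d=1$ is the single cycle $\{53955,59994\}$ of Theorem~\ref{l=1}, which we check by hand.

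The first step is monotonicity. Since $y_1\ge y_2\ge\cdots$ and $y_n\le y_{n-1}\le\cdots$, the differences satisfy $\deltaC{1}\ge\deltaC{2}\ge\cdots\ge\deltaC{\ell_c}\ge 1$. Hence the digits of $D$ fall into three groups:
\begin{itemize}
\item the left block $\deltaC{1},\dots,\deltaC{\ell_c-1},\deltaC{\ell_c}-1$, whose unique strict minimum is $\deltaC{\ell_c}-1$ (it sits in position $\ell_c$);
\item the middle block of $9$'s;
\item the right block $9-\deltaC{\ell_c},\dots,9-\deltaC{2},10-\deltaC{1}$.
\end{itemize}

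Now let $s$ be the smallest digit in the whole cycle, so every digit of $C$ and of $D$ is at least $s$. Suppose $s$ occurs in $D$. We rule out every position other than $\ell_c$.

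\emph{The last digit.} We cannot have $10-\deltaC{1}=s$. Indeed $\deltaC{1}=y_1-y_n\le 9-s$, because $y_n\ge s$, so $10-\deltaC{1}\ge s+1$.

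\emph{A left-block digit other than position $\ell_c$.} Each $\deltaC{i}$ with $i<\ell_c$ is strictly larger than $\deltaC{\ell_c}-1\ge s$, so none of them equals $s$.

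\emph{A middle $9$.} This would force $s=9$, impossible since $C$ has $y_1>y_n$.

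\emph{A right-block digit $9-\deltaC{i}$ with $2\le i\le \ell_c$.} This is the main obstacle. Here $\deltaC{i}=9-s$, which forces $y_i=9$ and $y_{n+1-i}=s$. By monotonicity this gives $y_1=\cdots=y_i=9$, $y_n=\cdots=y_{n+1-i}=s$, and $\deltaC{1}=\cdots=\deltaC{i}=9-s$. Thus $C$ contains at least $i\ge 2$ copies of $s$ and the leading digits of $D$ are all $9-s$. I would then feed this structure into Equation~\ref{C}: $C$ itself has the same shape with respect to the $\deltaD{j}$, and its copies of $s$ must sit somewhere in that expression. Running the same position analysis for $C$, a copy of $s$ in $C$ either lies at position $\ell_d$ (only one copy) or forces $\deltaD{j}=9-s$, i.e.\ $z_1=9$ and $z_n=s$. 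I would play this off against the fact that $D$'s last digit is $10-\deltaC{1}=s+1$ and $D$'s first digit is $9-s$. The aim is to contradict either the minimality of $s$ or the equality of the digit multisets of $\overline{D}$ and $D$. If no direct contradiction appears, the fallback is to count the digits $9$ and $s$ in both numbers, in the spirit of the end of the proof of Theorem~\ref{l=1}.

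Once it is established that $s$ can appear in $D$ only at position $\ell_c$, and symmetrically in $C$ only at position $\ell_d$, each of $C$ and $D$ contains $s$ at most once. The dichotomy in (1)/(2) then follows immediately. Either $s$ appears in both, once each, so the counts are equal, which is case (1). Or $s$ appears in exactly one of them, necessarily exactly once, which is case (2).
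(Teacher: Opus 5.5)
\textbf{There is a genuine gap: the case you call the main obstacle cannot be ruled out, because it actually occurs.} That case is $s=9-\deltaC{i}$ in the right block of $D$.

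For a concrete instance, take Family (3) of Theorem~\ref{MainResult} with $z=3$, $u=1$, $v=0$. Then $\overline{C}=9_38_27_36_35_34_33_32_31_2$, with $n=25$ and $\ell_c=11$. Here $\deltaC{1}=\deltaC{2}=8$ and $\deltaC{11}=2$, and Equation~\ref{D} gives $D=8876644422\,1\,999\,7775553321\,2$. The smallest digit $s=1$ occurs in $D$ both at position $\ell_c=11$ and at position $24$, as $9-\deltaC{2}$. The same happens in $C=k(D)$, so $s$ occurs twice in each number. Note that the first and last digits of $D$ are $8=9-s$ and $2=s+1$, exactly as you derived, with no contradiction.

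So no contradiction with the minimality of $s$ or with the digit multisets is available. Your intermediate claim that $s$ can sit in $D$ only at position $\ell_c$, and hence at most once, is false. Your closing derivation of (1)/(2) rests on that claim. Also, the ``Moreover'' clause only asserts that position $\ell_c$ is \emph{among} the occurrences of $s$ in $D$, not that it is the only one.

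\textbf{What is missing is a count in place of the hoped-for contradiction.} You already have the ingredients; your counting ``fallback'' points in the right direction but is aimed at the wrong goal.

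\begin{enumerate}
\item Let $i$ be the largest index in $[2,\ell_c]$ with $9-\deltaC{i}=s$. By monotonicity, the right block of $D$ contains exactly $i-1$ copies of $s$.
\item Your deduction $y_1=\cdots=y_i=9$ and $y_{n+1-i}=\cdots=y_n=s$ shows that $C$ has at least $i$ copies of $s$.
\item At most one of these copies is at position $\ell_d$. So at least $i-1\ge 1$ copies lie in the right block of $C$.
\item Run the same argument with $C$ and $D$ swapped. This shows that $D$ has at least $i$ copies of $s$.
\item The right block of $D$ holds exactly $i-1$ copies, and all other positions except $\ell_c$ are excluded. Hence the $\ell_c^{\mathrm{th}}$ digit of $D$ is $s$, and $D$ has exactly $i$ copies.
\item Suppose $C$ had more than $i$ copies. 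Then its right block would hold $j-1>i-1$ copies for the maximal such index $j$. This forces $z_{n+1-j}=\cdots=z_n=s$, giving $D$ at least $j>i$ copies, a contradiction.
\end{enumerate}

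This yields case (1), with equal counts that may exceed one. By the same argument, $s$ in the right block of $C$ forces $s$ into the right block of $D$. So only when $s$ avoids the right block of $D$ does your ``at most once'' conclusion hold. In that situation your final dichotomy argument is correct.
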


\begin{proof}
    Without loss of generality, suppose that the digit $s$ occurs at least once in $D$. The number $D$ has the form given by Equation~\ref{regionA}. We aim to show that $s$ is in one or both of the two locations indicated below:
    \begin{equation}\label{regionA}
    D =\deltaC{1}  \cdots  \deltaC{\ell_c-1}  \underbrace{(\deltaC{\ell_c}-1)}_{\text{$\ell_c^{th}$ digit}}  9 \cdots 9 \underbrace{(9-\deltaC{\ell_c})  (9 - \deltaC{\ell_c-1})  \cdots  (9-\deltaC{2})}_{\text{Region A}}  (10 - \deltaC{1})
\end{equation}
    Notice that $10-\deltaC{1} = 10 - (y_1-y_n) \geq 10 - (9-s) = 1 + s>s$. So, the last digit of $D$ cannot equal $s$. Also notice that $\deltaC{1} \geq \deltaC{2} \geq \cdots \geq \deltaC{\ell_c-1} > \deltaC{\ell_c} -1$, thus the digit $s$ must be the $\ell_c^{th}$ digit or in Region A, as indicated above. 
    
    Suppose $s$ is in Region A of $D$. We will show that in this case, $s$ appears the same number of times in $C$ and $D$. Let $i$ be the greatest integer where $2\leq i \leq \ell_c$ such that $9-\deltaC{i}=s$. So in that case, there are exactly $i-1$ digits in Region A of $D$ which equal $s$. Then: 
    \begin{align*}
     9-(y_i-y_{n+1-i}) &= s\\
    y_{n+1-i} - s &= y_i - 9
    \end{align*} 
    On one hand, we know $y_{n+1-i} - s \geq 0$. At the same time, $y_i - 9 \leq 0$. Therefore, $y_{n+1-i} = s$ and $y_i=9$.    
    Since $y_{j} \geq y_i$ for $j<i$, we know $y_{j} = 9$ for all $1\leq j \leq i$. Similarly, $y_{n+1-j} = s$ for all $1\leq j \leq i$. So,
    at least $i$ digits in $C$ equal $s$ (namely, the digits $y_{n+1-i},y_{n+2-i},\ldots,y_n$) and at least $i$ digits equal 9 (the digits $y_1, \ldots, y_i$).

   As with $D$, there are only 2 possible locations for $s$ in $C$: (1)  the $\ell_d^{th}$ digit or (2) Region A of $C$. The $\ell_d^{th}$ digit of $C$ can account for at most one of these digits. So $C$ has at least $i-1$ digits equal to $s$ in Region A. Note that $i-1\neq 0$ since $i\geq2$.
    
    Now we turn the argument around (switching the roles of $C$ and $D$) and conclude that $s$ occurs as a digit in $D$ at least $i$ times. However, we already know that $s$ occurs exactly $i-1$ times in Region A in $D$, so it must be that the $\ell_c^{th}$ digit of $D$ is $s$, and $D$ has exactly $i$ digits equal to $s$.  We said that $C$ had at least $i$ digits equal to $s$. If $C$ had more than $i$ digits equal to $s$, there would be more than $i-1$ digits in Region A of $C$, but that would force $D$ to have more than $i-1$ digits in its Region A. Thus, $C$ has exactly $i$ digits equal to $s$. We have shown that $s$ appears the same number of times in $C$ and $D$.
    
    Now suppose $s$ does {\em not} occur in Region A of $D$. Then, $s$ occurs exactly once in $D$ as the $\ell_c^{th}$ digit. Observe that $s$ cannot appear in Region A of $C$ since that would imply that $s$ also occurs in Region A of $D$. Therefore, there are two possibilities. Either $s$ occurs only as  the $\ell_d^{th}$ digit of $C$ or $s$ does not occur in $C$ at all. The former option results in case (1) of the theorem, while the latter option results in case (2) of the theorem.  In either case, observe from our discussion that whenever the digit $s$ occurs in $D$, it must be the $\ell_c^{th}$ digit, which proves the final statement of the theorem.
\end{proof}

\begin{lemma}\label{morethan2}
If $C$ is in a Kaprekar 2-cycle, then $C$ has three or more distinct digits.
\end{lemma}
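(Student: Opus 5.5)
We argue by contradiction: suppose $C$ has at most two distinct digits. It cannot have exactly one, since then $k(C)=0$, $D=0$ and $k(D)=0\neq C$. So $\overline{C}=a_p b_q$ with $a>b$, $p,q\geq 1$, $p+q=n$. By Theorem~\ref{l=1}, the only 2-cycle with $\ell_c=1$ is $\{53955,59994\}$, and $53955$ has three distinct digits. Hence we may assume $\ell_c,\ell_d\geq 2$ and $n>5$.

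Next we write down $D$ via Equation~\ref{D}. For $\overline{C}=a_pb_q$, the corresponding differences $\deltaC{i}$ for $i\leq \ell_c=\min(p,q)$ all equal $a-b=:\delta$. The middle digits are all $a$ (if $p>q$) or all $b$ (if $q>p$). Therefore
\[
D=\delta_{\ell_c-1}\,(\delta-1)\,9_{n-2\ell_c}\,(9-\delta)_{\ell_c-1}\,(10-\delta).
\]
Since $n>5$ and $\ell_c\ge 2$, this $D$ contains the digits $\delta$, $\delta-1$, $9-\delta$ and $10-\delta$, and also $9$ whenever $n>2\ell_c$.

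The plan is then to compute $k(D)$ and show it cannot have only the two digits $a,b$ with the required multiplicities. First, when $p\neq q$, $D$ contains a 9. The four values $\delta-1<\delta$ and $9-\delta<10-\delta$ can coincide only when $\delta-1=9-\delta$ (so $\delta=5$) or $\delta=10-\delta$ (also $\delta=5$), or when $\delta=9-\delta$, which is impossible. So for $\delta\neq 5$ the number $D$ has four distinct digits, and for $\delta=5$ it has the digits $4,5,9$. A cheaper route is to apply Theorem~\ref{smallest} to $D$ and $C$. The smallest digit $s$ of the cycle is $b$ or lies among $\{\delta-1,9-\delta\}$. Its occurrences in $C$ must sit in position $\ell_d$ or in Region A of $C$, and both $C$ and $D$ have the explicit forms given in Equations~\ref{D} and~\ref{C}. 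Thus $b$ occurs in $C$ either in at least $q\geq 2$ positions, forced into Region A, or we get a mismatch with Theorem~\ref{smallest}. Counting occurrences then pins down the possibilities.

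I expect this case analysis to be the main obstacle: there are the two cases $p<q$ and $p>q$, plus the special value $\delta=5$. My first attempt would be the direct route. Sort $D$ for each small case ($\delta=5$, and then $\delta\le 4$ versus $\delta\ge 6$ to order $\delta,\delta-1$ against $9-\delta,10-\delta$). Then compute $\overline{D}-\underline{D}$ explicitly, as in the proof of Theorem~\ref{l=1}. Finally observe that the resulting $C$ contains some digit like $9$ or $8$ in its middle block together with at least two other values, contradicting $C$ having only digits $a,b$. One caveat is that if $a=9$ a 9 in $C$ is allowed. In that subcase I would use the last digit of $C$, namely $10-\deltaD{1}$, together with the first digit $\deltaD{1}$, to force $a+b=10$ and $\deltaD{1}=a$. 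Combining this with the digit multiset of $D$ should yield a contradiction quickly.
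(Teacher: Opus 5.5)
\textbf{There is a genuine gap: the proposal stops where the proof has to happen.} Your reduction is sound as far as it goes. Excluding a single repeated digit is fine, Theorem~\ref{l=1} does give $\ell_c,\ell_d\ge 2$, and the formula
\[
D=\delta_{\ell_c-1}(\delta-1)9_{n-2\ell_c}(9-\delta)_{\ell_c-1}(10-\delta)
\]
is correct. But you never compute $k(D)$ or eliminate a single case. The route through Theorem~\ref{smallest} breaks off at ``counting occurrences then pins down the possibilities,'' and the direct route ends with ``should yield a contradiction quickly.''

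The mechanism you sketch for the direct route also does not work in general. When $p=q$, the two middle digits of $\overline{D}$ are always distinct:
\begin{itemize}
\item $\delta-1,\,10-\delta$ when $\delta\ge 6$;
\item $9-\delta,\,\delta$ when $\delta\le 4$;
\item $5,\,4$ when $\delta=5$.
\end{itemize}
So $k(D)$ has no central block of $9$'s at all, and each of these cases would need its own contradiction. As written, this is a plan rather than a proof.

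\textbf{The missing idea.} It is the one you reserve for the subcase $a=9$, and it is essentially the paper's whole proof: read the digits of $C$ off Equation~\ref{C} instead of computing forward from $\overline{C}$.
\begin{itemize}
\item Since $\ell_d\ge 2$, $C$ contains $\Delta^D_1$, $\Delta^D_{\ell_d}-1$, $9-\Delta^D_{\ell_d}$ and $10-\Delta^D_1$.
\item Because $\Delta^D_1\ge \Delta^D_{\ell_d}>\Delta^D_{\ell_d}-1$, necessarily $a=\Delta^D_1$ and $b=\Delta^D_{\ell_d}-1$.
\item Then $10-a\in\{a,b\}$ forces $a=5$ or $a+b=10$, and $8-b\in\{a,b\}$ forces $b=4$ or $a+b=8$.
\item Together these leave only $(a,b)\in\{(6,4),(5,4),(5,3)\}$.
\end{itemize}
The paper then writes out the few resulting candidates for $C$ and checks them by hand.

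With your formula for $D$ you can finish even faster. Now $\delta=a-b\in\{1,2\}$, and $D$ contains both $10-\delta$ and $\delta-1$, so
\[
a=\Delta^D_1\ \ge\ (10-\delta)-(\delta-1)=11-2\delta\ \ge\ 7,
\]
which contradicts $a\le 6$.
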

\begin{proof}
         Suppose $C$ has exactly two distinct digits. Equation~\ref{C} indicates that $C$ includes the digits: $\deltaD{1}$ and $\deltaD{\ell_d}-1$. Notice that $\deltaD{1}>\deltaD{\ell_d}-1$ because $\deltaD{1} \geq \deltaD{2} \geq \cdots \geq \deltaD{\ell_d-1}\geq \deltaD{\ell_d} > \deltaD{\ell_d} -1$. Thus, all digits of $C$ either equal $\deltaD{1}$ or $\deltaD{\ell_d}-1$. In addition, $\deltaD{1} = \deltaD{i}$ for all $1<i<\ell_d$ because each is a digit in $C$.

    Now consider the digit $9-\deltaD{\ell_d}$ in $C$ (Equation~\ref{C}). Suppose $9-\deltaD{\ell_d}=\deltaD{\ell_d}-1$. Then $\deltaD{\ell_d}=5$. Another digit in $C$ is $10-\deltaD{1}$. Thus, $10-\deltaD{1}= 4$ or $\deltaD{1}$, which implies that $\deltaD{1}=6$ or $5$, respectively. In light of Equation~\ref{C}, $C=\underbrace{6\cdots 64}_{\ell_d}\underbrace{44\cdots 4}_{\ell_d}$ or $\underbrace{5\cdots 54}_{\ell_d}\underbrace{44\cdots 45}_{\ell_d}$, respectively. In either case, $\overline{C}-\underline{C}$ gives a number $D$ such that $\overline{D}-\underline{D} \neq C$, a contradiction. Therefore, $9-\deltaD{\ell_d}\neq\deltaD{\ell_d}-1$.

    The only remaining possibility is $9-\deltaD{\ell_d}=\deltaD{1}$. In this case, the digit $10-\deltaD{1}$ cannot equal $\deltaD{\ell_d}-1$, as it leads to an immediate contradiction. Thus $10-\deltaD{1}=\deltaD{1}$, and we conclude $\deltaD{1}=5$ and $\deltaD{\ell_d}=4$. In that case, $C=\underbrace{5\cdots 53}_{\ell_d}\underbrace{54\cdots 45}_{\ell_d}$. The only way for this number to have two digits is if $\ell_d=2$ and $C = 5355$. But, $C$ does not produce a 2-cycle. Therefore, $C$ must have three or more distinct digits.
\end{proof}

\begin{lemma}\label{notallsame}
    Let $\{C,D\}$ be a Kaprekar 2-cycle. Then, $\deltaC{1}\neq \deltaC{\ell_c}$. 
\end{lemma}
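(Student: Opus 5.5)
Suppose for contradiction that $\deltaC{1}=\deltaC{\ell_c}=:\delta$. Since the sequence $\deltaC{1}\ge\deltaC{2}\ge\cdots\ge\deltaC{\ell_c}$ is non-increasing, all corresponding-pair differences equal $\delta$. Equation~\ref{D} then shows that $D$ has a very rigid form:
$$D=\underbrace{\delta\cdots\delta}_{\ell_c-1}\,(\delta-1)\,\underbrace{9\cdots9}_{n-2\ell_c}\,\underbrace{(9-\delta)\cdots(9-\delta)}_{\ell_c-1}\,(10-\delta).$$
So the digits of $D$ lie in the set $\{\delta,\delta-1,9,9-\delta,10-\delta\}$, with prescribed multiplicities. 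The plan is to feed this into the requirement that $k(D)=C$ and show the result is inconsistent with $\deltaC{i}=\delta$ for all $i\le\ell_c$.

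First, I would dispose of the degenerate case. By Theorem~\ref{l=1} we may assume $\ell_c,\ell_d\ge 2$, and the $\{53955,59994\}$ cycle (which has $\ell_c=1$) does not arise. The remaining cases will be split according to whether $n=2\ell_c$, i.e.\ whether the block of 9's is absent, and according to the value of $\delta$ relative to $5$. The second split decides the ordering of $\delta$, $\delta-1$, $9-\delta$ and $10-\delta$ inside $\overline{D}$.

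In each case I would write $\overline{D}$ explicitly as a sorted string of at most five distinct digit values with known multiplicities. Then I would compute $C=\overline{D}-\underline{D}$ in the same way as in the proof of Theorem~\ref{l=1}. That is, I would determine $\deltaD{i}$ for each $i$ as a difference of two digits from this small set, and read off $C$ from Equation~\ref{C}.

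The final step is to compare $C$ with $\overline{C}$. The hypothesis says that $\overline{C}$ has $y_i-y_{n+1-i}=\delta$ for every $i\le\ell_c$, with equal middle digits. Hence the multiset of digits of $C$ must be expressible as $\ell_c$ pairs $(a,a-\delta)$ together with $n-2\ell_c$ copies of a single digit. Equivalently, one can use Theorem~\ref{smallest}: the smallest digit of $D$ is $\min(\delta-1,9-\delta)$ (recall $10-\delta>9-\delta$). If $9-\delta<\delta-1$, the smallest digit lies in Region A, and the argument in that theorem forces $y_1=\cdots=y_{\ell_c}=9$ and $y_{n+1-i}=9-\delta$ for all $i\le\ell_c$. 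This pins down $\overline{C}$ almost completely, and I would check directly that $k(k(C))\ne C$. If instead $\delta-1\le 9-\delta$, i.e.\ $\delta\le5$, the smallest digit $\delta-1$ appears exactly once in $D$. Then the explicit $C$ obtained from the subtraction should be checked to have a digit whose count, or whose pairing with $\delta$, fails. Lemma~\ref{morethan2} can be used to kill the sub-cases where $C$ collapses to two distinct digits.

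The main obstacle is the bookkeeping in the subtraction $\overline{D}-\underline{D}$ when several of the values $\delta$, $9-\delta$, $10-\delta$ and $9$ coincide. Examples are $\delta=5$, where $\delta=10-\delta$, and $\delta$ near $9$ or $1$. These coincidences change the multiplicities and the boundary positions in $\overline{D}$. I would handle them by treating the coincident values $\delta\in\{1,5,9\}$ (and small $\ell_c$) as separate small cases, verifying each by direct computation.
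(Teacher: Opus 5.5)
Your proposal is a plan rather than a proof. Every decisive step is deferred (``I would check directly'', ``should be checked''), and in one case the check you describe cannot succeed. Take $\delta=5$, which falls in your branch $\delta-1\le 9-\delta$, with $\overline{C}=9_{\ell}5_{\ell}4_{\ell}$, so $\ell_c=\ell$ and $n=3\ell$. Then $D=5_{\ell-1}\,4\,9_{\ell}\,4_{\ell-1}\,5$, hence $\overline{D}=\overline{C}$, and your subtraction $\overline{D}-\underline{D}$ returns $D$ itself. The resulting ``$C$'' has exactly the shape you demand: all $\ell$ corresponding pairs are $\{9,4\}$ with difference $\delta$, and the middle block is constant. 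No digit count or pairing fails, because this is a genuine fixed point of $k$ (Family (2) of Theorem~\ref{dolan}). The only contradiction available is that a 2-cycle consists of two \emph{distinct} numbers, and your proposal never uses that hypothesis. Relatedly, your claim that $\delta-1$ occurs exactly once in $D$ is false at $\delta=5$, where $4$ occurs $\ell_c$ times. Also, the coincident values of $\delta$ are not small cases to verify by direct computation. Since $\ell_c$ and $m=n-2\ell_c$ are unbounded, each is an infinite family that needs a parametric argument.

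The idea that makes the paper's proof short, and that you stop just short of, is structural. The equality $\deltaC{1}=\deltaC{\ell_c}$, together with $y_1\ge y_{\ell_c}$ and $y_n\le y_{n+1-\ell_c}$, forces $\overline{C}=u_{\ell_c}v_mw_{\ell_c}$, with $u>v>w$ and $m\ge 1$ by Lemma~\ref{morethan2}. You only record that $C$ splits into pairs $(a,a-\delta)$ plus a constant block. Since $w$ then occurs $\ell_c\ge 2$ times in $C$, Theorem~\ref{smallest} settles every case quickly.

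If $\delta>5$, note that $9-\delta=9-u+w\ge w$. Strict inequality is impossible, since $w$ would then be the smallest digit of the cycle yet absent from $D$. So the smallest digit $9-\delta$ lies in Region A of $D$ but is not its $\ell_c$-th digit $\delta-1$, an immediate contradiction. You apply Theorem~\ref{smallest} to the smallest digit of $D$ without this first step, and your subsequent check of $k(k(C))$ is unnecessary.

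If $\delta=5$, the same reasoning gives $w=4$ and $u=9$. The first digit of $C$ is $\deltaD{1}=5$, so $v=5$. Then $m=\ell_c$, since otherwise $k(D)$ contains a $3$ or a $0$. This is exactly the configuration above, where $C=D$.

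If $\delta<5$, the smallest digit $\delta-1$ lies only in $D$, and $C$ begins with $10-\delta$ and ends with $\delta$. This forces $w=\delta$, $u=2\delta$ and $v=10-\delta$, hence $\delta=4$ and $\overline{C}=8_{\ell_c}6_m4_{\ell_c}$. That case is killed by $\deltaD{2}\in\{5,2\}$, which would put a $5$, $2$, or $1$ into $C$.
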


\begin{proof}
Suppose that $\deltaC{1}=\deltaC{\ell_c}$. Then $\deltaC{1}=\deltaC{2}=\cdots = \deltaC{\ell_c}$, and $\overline{C}$ is of the form 
    $\overline{C} = \underbrace{u\cdots u}_{\ell_c}\underbrace{v\cdots v}_{n-2\ell_c} \underbrace{w\cdots w}_{\ell_c},$
    where $u> v> w$ and $n-2\ell_c>0$ by Lemma~\ref{morethan2}. Observe that this implies $D$ contains the digit 9. 
   We consider two cases according to whether $C$ contains the digit 9. 
   
   First, suppose $C$ contains the digit 9. Then $u=9$ and 
    $
    \overline{C} = \underbrace{9\cdots 9}_{\ell_c}\underbrace{v\cdots v}_{n-2\ell_c} \underbrace{w\cdots w}_{\ell_c}.
    $
    Subtracting $\overline{C}-\underline{C}$, we find
    $
    D = \underbrace{(9-w) \cdots (9-w)}_{\ell_c-1}\;  (8-w) \; \underbrace{9 \cdots 9}_{n-2\ell_c}\; \underbrace{w \cdots w}_{\ell_c-1} \; (w+1).
    $
By Theorem~\ref{smallest}, there are two possibilities for the smallest digit in the Kaprekar 2-cycle $\{C,D\}$. The first possibility is that the smallest digit is $w$, which forces $w=8-w$. Hence $w=4$. The second possibility is that the smallest digit is $8-w$ and $8-w<w$, in which case $w>4$. 
    
    In the first case where $w=4$,
    $
    D = \underbrace{5 \cdots 5}_{\ell_c-1}\;  4 \; \underbrace{9 \cdots 9}_{n-2\ell_c}\; \underbrace{4 \cdots 4}_{\ell_c-1} \; 5.
    $
    The first digit of  $\overline{D}-\underline{D}=C$ is 5, so then $v = 5$, and
  $
    \overline{C} = \underbrace{9 \cdots 9}_{\ell_c} \; \underbrace{5\cdots 5}_{n-2\ell_c}\; \underbrace{4 \cdots 4}_{\ell_c}.
    $
    We claim $n-2\ell_c = \ell_c$. For, if $n-2\ell_c > \ell_c$ (respectively, $n-2\ell_c < \ell_c$), then $\overline{D} - \underline{D} = C$ would contain the digit 3 (respectively, 0). 
    Therefore $n-2\ell_c = \ell_c$ and $C=D$, which means we have found a Kaprekar constant (Family 2 in Theorem~\ref{dolan}), rather than a Kaprekar 2-cycle. Thus $w=4$ is not possible.

    The second possibility is that the smallest digit is $8-w$ and  $8-w<w$, which implies $w>4$. Since $D$ contains the digit 9, $\deltaD{1} = z_1 - z_n = 9-(8-w) = w+1$, and hence the last digit of $C$ is $10-\deltaD{1} = 9-w$. Because $w>4$, the digit $9-w$ is less than 5 and is smaller than $w$, but $w$ was the smallest digit in $C$, a contradiction. 
    
    Thus we have shown that $C$ cannot contain the digit 9.
Recall that
$
    \overline{C} = \underbrace{u\cdots u}_{\ell_c}\underbrace{v\cdots v}_{n-2\ell_c} \underbrace{w\cdots w}_{\ell_c},
    $
    where now $9>u> v> w$. Letting $a=u-w$, we have 
    \[
    D=  \underbrace{a \cdots a}_{\ell_c-1}\;  (a-1) \; \underbrace{9 \cdots 9}_{n-2\ell_c}\; \underbrace{(9-a)  \cdots  (9-a)}_{\ell_c-1} \; (10-a) 
    \]
If the smallest digit in the Kaprekar cycle is in both $C$ and $D$, then $w=a-1$ by Theorem~\ref{smallest}. Because $w$ occurs more than once (for, otherwise, $\ell_c=1$), this implies $a-1=9-a$, so $a=5$. This forces $w=4$ and $u=9$, a contradiction. 

If the smallest digit occurs only once in one of $C$ or $D$, it must be in $D$, (again, otherwise $\ell_c=1$). By Theorem~\ref{smallest}, $a-1$ is the smallest digit, $a-1<w$, and $a-1<9-a$, which implies $a<5$.

Observe that $ \overline{D}$ begins with 9 and ends with $a-1$, which means $C =  \overline{D}-\underline{D}$ begins with $10-a$ and ends with $a$. 
Because $a$ is a digit in $C$ and because $a-1$ is the smallest digit in the cycle and is only found in $D$, it follows that $a=w$. Furthermore, since $a=u-w$, it follows that $u=2w$, and the integer $C$ has only three distinct digits: $u = 2w$, $v$, $w$. Now $C$ begins with the digit $10-a=10- w$, so then $10-w$ is one of the three digits of $C$: $2w$, $v$, or $w$. 

We consider each possibility. If $10-w=w$, then $w=5$, forcing $u=10$, a contradiction. If $10-w=2w$, then $3w=10$, but $w$ is an integer. This leaves $10-w=v$ as the only option. Since $u=2w> v$, this implies $2w> 10-w$, so $w>\frac{10}{3}$. Since $2w$ also has to be a single digit, $w=4$, $u=8$, and $v=6$. Therefore: 
\[ 
\overline{C}= \; \underbrace{8 \cdots 8}_{\ell_c} \; \underbrace{6 \cdots 6}_{n-2\ell_c} \; \underbrace{4 \cdots 4}_{\ell_c} \;\; 
\text{and} \;\; D = \underbrace{4 \cdots 4}_{\ell_c-1} \; 3 \; \underbrace{9 \cdots 9}_{n-2\ell_c} \; \underbrace{5 \cdots 5}_{\ell_c-1} \; 6
\]
The second digit of $\overline{D}$ is either 9 or 6, and the second-to-last digit of $\overline{D}$ is 4. Their difference $\deltaD{2} = z_2-z_{n-1}$ is then either 5 or 2. Thus $C$ ought to have a digit equal to 5, 2, or 1. None of these digits are in $C$, a contradiction. Therefore, it must be the case that $\deltaC{1}\neq \deltaC{\ell_c}$. 
\end{proof}

In Section~\ref{section2}, we discussed that pairing $i^{th}$ and $(n+1-i)^{th}$ digit of $\overline{C}$ for $1\leq i\leq \ell_c$ gives the so-called corresponding pairs of $\overline{C}$. We now introduce a second and third way to pair digits.

The second digit pairing comes from again considering the number $\overline{C}$. Observe that the first $n-2\ell_d$ digits of $\overline{C}$ are 9's. Disregard those first $n-2\ell_d$ digits. Pair the remaining digits of $\overline{C}$, starting with the outside pair and working inward. The $(n-2\ell_d+i)^{th}$ and $(n+1-i)^{th}$ digits of $\overline{C}$ form a {\bf rainbow pair} in $\overline{C}$, where $1\leq i \leq \ell_d$. Note that if $\overline{C}$ does not contain the digit 9, then the rainbow and corresponding pairs coincide.

The third pairing arises from considering the integer $C$, rather than $\overline{C}$. Let $r_i$ denote the $i^{th}$ digit of $C$ for $1\leq i\leq \ell_d$. We call the pairs of digits $r_i$ and $r_{n+1-i}$ a {\bf symmetric pair} of $C$. Considering Equation~\ref{C}, exactly one symmetric pair sums to 10, one sums to 8, and the rest sum to 9. 
\begin{equation}\label{symmetricpairs}
r_i + r_{n+1-i}=
\begin{cases}
10, & \text{ if } i = 1\\
9, & \text{ if } 1 < i < \ell_d\\
8, & \text{ if } i = \ell_d
\end{cases}
\end{equation}

We illustrate these three types of pairs with an example. Consider the Kaprekar 2-cycle with $n=13$ digits: $C=9665429654331$, $D=8733209876622$. Observe that $\ell_d =6$, so there are 6 pairs in each case. 
\begin{itemize}
\item The corresponding pairs of $\overline{C}$: $\{9,1\}$, $\{9,2\}$, $\{6,3\}$, $\{6,3\}$, $\{6,4\}$, $\{5,4\}$.
\item The symmetric pairs of $C$: $\{9,1\}$, $\{6,3\}$, $\{6,3\}$, $\{5,4\}$, $\{5,4\},$ $\{6,2\}$.
\item The rainbow pairs of $\overline{C}$ are the same as the symmetric pairs.
\end{itemize}
These three sets of pairs are illustrated below. From left to right, we see the corresponding pairs of $\overline{C}$, the symmetric pairs of $C$, and the rainbow pairs of $\overline{C}$. Unpaired digits are in bold.
$$
     \begin{tikzpicture}[>=stealth,baseline,anchor=base,inner sep=0pt]
      \matrix (foil) [matrix of math nodes,nodes={minimum height=0.5em}] {
        \overline{C} =~ & 9 & 9 & 6& 6 & 6 &5 & \bold{5} &4 &4 &3 &3 &2 &1 \\
      };
      \path ($(foil-1-2.north)+(0,0.5ex)$)   edge[red,bend left=50]    ($(foil-1-14.north)+(0,0.5ex)$)
                ($(foil-1-3.north)+(0,0.5ex)$)   edge[blue,bend left=50]  ($(foil-1-13.north)+(0,0.5ex)$)
                ($(foil-1-4.north)+(0,0.5ex)$) edge[red,bend left=50]      ($(foil-1-12.north)+(0,0.5ex)$)
                ($(foil-1-5.north)+(0,0.5ex)$) edge[blue,bend left=45] ($(foil-1-11.north)+(0,0.5ex)$)
                ($(foil-1-6.north)+(0,0.5ex)$) edge[red,bend left=45] ($(foil-1-10.north)+(0,0.5ex)$)
                ($(foil-1-7.north)+(0,0.5ex)$) edge[blue,bend left=45] ($(foil-1-9.north)+(0,0.5ex)$);
    \end{tikzpicture}
\hspace{1cm}
   \begin{tikzpicture}[>=stealth,baseline,anchor=base,inner sep=0pt]
      \matrix (foil) [matrix of math nodes,nodes={minimum height=0.5em}] {
        C =~ & 9 & 6 & 6 & 5  & 4 & 2 & \bold{9} & 6 & 5 & 4 & 3 & 3 & 1 \\
      };
      \path ($(foil-1-2.north)+(0,0.5ex)$)   edge[red,bend left=50]    ($(foil-1-14.north)+(0,0.5ex)$)
                ($(foil-1-3.north)+(0,0.5ex)$)   edge[blue,bend left=50]  ($(foil-1-13.north)+(0,0.5ex)$)
                ($(foil-1-4.north)+(0,0.5ex)$) edge[red,bend left=50]      ($(foil-1-12.north)+(0,0.5ex)$)
                ($(foil-1-5.north)+(0,0.5ex)$) edge[blue,bend left=45] ($(foil-1-11.north)+(0,0.5ex)$)
                ($(foil-1-6.north)+(0,0.5ex)$) edge[red,bend left=45] ($(foil-1-10.north)+(0,0.5ex)$)
                ($(foil-1-7.north)+(0,0.5ex)$) edge[blue,bend left=45] ($(foil-1-9.north)+(0,0.5ex)$);
    \end{tikzpicture}
\hspace{1cm}
    \begin{tikzpicture}[>=stealth,baseline,anchor=base,inner sep=0pt]
      \matrix (foil) [matrix of math nodes,nodes={minimum height=0.5em}] {
        \overline{C} =~ & \bold{9} & 9 & 6& 6 & 6 &5 & 5 &4 &4 &3 &3 &2 &1 \\
      };
      \path ($(foil-1-3.north)+(0,0.5ex)$)   edge[red,bend left=50]    ($(foil-1-14.north)+(0,0.5ex)$)
                ($(foil-1-4.north)+(0,0.5ex)$)   edge[blue,bend left=50]  ($(foil-1-13.north)+(0,0.5ex)$)
                ($(foil-1-5.north)+(0,0.5ex)$) edge[red,bend left=50]      ($(foil-1-12.north)+(0,0.5ex)$)
                ($(foil-1-6.north)+(0,0.5ex)$) edge[blue,bend left=45] ($(foil-1-11.north)+(0,0.5ex)$)
                ($(foil-1-7.north)+(0,0.5ex)$) edge[red,bend left=45] ($(foil-1-10.north)+(0,0.5ex)$)
                ($(foil-1-8.north)+(0,0.5ex)$) edge[blue,bend left=45] ($(foil-1-9.north)+(0,0.5ex)$);
    \end{tikzpicture}
$$
In this particular example, the symmetric and rainbow pairs coincide. We  pin down exactly when this happens in the following lemma. 
\begin{lemma}\label{inpairs}
Let $\{C,D\}$ be a Kaprekar 2-cycle. Let $\{p, q\}$ and $\{r,s\}$ be the two symmetric pairs of $C$ which sum to 10 and 8, respectively. Without loss of generality, let $p\geq q$ and $r\geq s$.  
\begin{enumerate}
\item The symmetric pairs of $C$ and rainbow pairs of $\overline{C}$ coincide if and only if $p\geq r\geq s\geq q~$ or $~r\geq p\geq q\geq s$.
\item The following are equivalent: (a) the symmetric pairs of $C$ and rainbow pairs of $\overline{C}$ do not coincide, (b) $p>r>q>s$, and (c) $\deltaD{1} +\deltaD{\ell_d} = 10$. 

In this case, all rainbow pairs sum to 9. Moreover, the symmetric pairs $\{p,q\}$ and $\{r,s\}$ of $C$ become rainbow pairs of $\overline{C}$ as follows: $\{p,s\}$ and $\{q,r\}$. All other symmetric pairs are identical to the rainbow pairs. 

\end{enumerate}
\end{lemma}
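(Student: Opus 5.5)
The plan is to make the rainbow pairing concrete by describing the multiset of digits of $C$, and then to reduce everything to a comparison of intervals. Write $\delta_i=\deltaD{i}$ and $\ell=\ell_d$. By Equation~\ref{C}, the digits of $C$ consist of $n-2\ell$ middle $9$'s together with the $\ell$ symmetric pairs
\[
\{p,q\}=\{\delta_1,10-\delta_1\},\qquad \{\delta_i,9-\delta_i\}\ (1<i<\ell),\qquad \{r,s\}=\{\delta_\ell-1,\,9-\delta_\ell\}.
\]
Deleting the first $n-2\ell$ nines of $\overline{C}$ leaves exactly the multiset of these $2\ell$ paired digits, even if some paired digits are themselves $9$. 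The rainbow pairing therefore matches the $i$-th largest remaining digit with the $i$-th smallest. So the rainbow pairs coincide with the symmetric pairs exactly when the closed intervals $[b,a]$, one for each symmetric pair $a\ge b$, form a chain under inclusion. Ties produce the same multiset of pairs and cause no trouble.

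The first step is to check every interval comparison except one. A $9$-pair interval is $[9-a,a]$ and the $10$-pair interval is $[10-p,p]$. If $a\ge p$ then $9-a<10-p$, so $[10-p,p]\subseteq[9-a,a]$. If $a<p$ then $a\le p-1$, so $9-a\ge 10-p$ and the inclusion goes the other way. The same argument, with midpoint $4$ instead of $5$, shows the $8$-pair interval $[8-r,r]$ is comparable with every $9$-pair interval. Two $9$-pair intervals share the midpoint $4.5$ and are always nested. Hence the pairs coincide if and only if $[q,p]$ and $[s,r]$ are nested, that is, $p\ge r\ge s\ge q$ or $r\ge p\ge q\ge s$. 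This proves (1).

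For (2), (a)$\Leftrightarrow$(b), I will show that non-nestedness forces $p>r>q>s$. The option $p<r$ and $q<s$ is impossible because $p+q=10>8=r+s$, so non-nestedness gives $p>r$ and $q>s$. If also $r\le q$, then $s<r<q<p$. Together with $p+q=10$ and $r+s=8$ this gives $8-s<10-p$, hence $p<s+2$, which contradicts $p\ge s+3$. The boundary cases $r=q$ reduce to the nested inequalities: $q=4$ gives $(p,r,s)=(6,4,4)$ and $q=5$ gives $(p,r,s)=(5,5,3)$. So $r>q$, and conversely $p>r>q>s$ is visibly not nested. In this case the chain of all the other intervals is undisturbed, and sorting pairs $p$ with $s$ and $r$ with $q$. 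This gives the described rainbow pairs $\{p,s\}$ and $\{q,r\}$, with all other rainbow pairs equal to the symmetric ones.

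The main obstacle is the link with (c) and the claim that all rainbow pairs sum to $9$; since $p+q+r+s=18$, this amounts to showing $p+s=9$. My approach is to identify $p,q,r,s$ with the $\delta$'s case by case, using $\delta_1\ge\delta_\ell$. If $p=\delta_1$ and $r=\delta_\ell-1$, then (b) forces $\delta_1=\delta_\ell$, which is excluded by Lemma~\ref{notallsame} applied to $D$. If $p=\delta_1$ and $r=9-\delta_\ell$, then (b) becomes $\delta_1>\delta_\ell+1$ and $\delta_1+\delta_\ell\le 10$. In this case $p+s=\delta_1+\delta_\ell-1$, so (c) is equivalent to $p+s=9$. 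Going backwards, (c) gives $p=\delta_1\ge5$, $q=10-\delta_1=\delta_\ell$, $r=9-\delta_\ell$, $s=\delta_\ell-1$, and the ordering (b) is then immediate, giving (c)$\Rightarrow$(b). The case $p=10-\delta_1$ is handled the same way.

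The remaining gap is to exclude $\delta_1+\delta_\ell<10$ under (b). For this I would use the 2-cycle structure. Since the rainbow pairs come from $\overline{C}$, I would compare them with the corresponding pairs of $\overline{C}$, which produce $D$ via Equation~\ref{D}, and with Theorem~\ref{smallest}, which places the smallest digit $s$ at position $\ell_d$ of $C$. The first thing to try is to show that the smallest digit of $C$ must be paired with $9$ or with its mirror in a way that forces $\delta_1+\delta_\ell=10$. If that fails, I would fall back on a digit-count comparison of the value $q$ between $C$ and $D$.
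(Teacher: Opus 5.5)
Your interval-chain argument for (1) is correct, and it even avoids Lemma~\ref{notallsame}, which the paper uses there. However, (a)$\Rightarrow$(b) contains a false step. From $r\le q$ you conclude $s<r<q<p$, but you only know $r\ge s$ and $p\ge q$, so you get only $s\le r<q\le p$. Since $q\le 5\le p$ and $s\le 4\le r$, the case $r<q$ is exactly $p=q=5$, $r=s=4$. This configuration is not nested, since its rainbow pairs are $\{5,4\},\{5,4\}$ rather than $\{5,5\},\{4,4\}$, yet it violates (b). So (a)$\Rightarrow$(b) is not a purely combinatorial fact, and you must rule this case out using the 2-cycle. Here $\{p,q\}=\{5,5\}$ and $\{r,s\}=\{4,4\}$ force $\delta_1=\delta_\ell=5$, contradicting Lemma~\ref{notallsame}; this is exactly where the paper invokes that lemma. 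Your (c)$\Rightarrow$(b) needs the same lemma: the strict middle inequality requires $\delta_1\ge 6$, not just $\delta_1\ge 5$.

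Second, you leave open the step you call the main obstacle. As written, the proposal therefore does not establish (b)$\Rightarrow$(c), the claim that all rainbow pairs sum to $9$, or the re-pairing into $\{p,s\},\{q,r\}$, which you asserted earlier without justification. The detour through Theorem~\ref{smallest} and digit counts is unnecessary; the missing observation is one line. Under (b), $p-r\ge 1$ and $q-s\ge 1$, while $(p-r)+(q-s)=(p+q)-(r+s)=2$. Hence $p=r+1$ and $q=s+1$, so $p+s=q+r=9$. Then every pair is a $9$-pair with midpoint $4.5$, so the pairs form a chain and are exactly the rainbow pairs. For (c), in your case $p=\delta_1$, $r=9-\delta_\ell$, you dropped the inequality $p>r$, i.e.\ $\delta_1+\delta_\ell\ge 10$. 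Together with $q>s$, i.e.\ $\delta_1+\delta_\ell\le 10$, this gives $\delta_1+\delta_\ell=10$ immediately. Equivalently, as the paper does, $p-q=r-s$ gives $|10-2\delta_1|=|10-2\delta_\ell|$, and $\delta_1\neq\delta_\ell$ leaves $\delta_1+\delta_\ell=10$.
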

\begin{remark}
The statements above are given in terms of $C$ but also hold for $D$.
\end{remark}

\begin{proof}
Suppose the values in the two pairs $\{p, q\}$ and $\{r,s\}$ are nested (meaning, $p\geq r\geq s\geq q~$ or $~r\geq p\geq q\geq s$). Let us consider how the digits in the symmetric pairs of $C$ reposition in the formation of $\overline{C}$. Recall that the symmetric pairs of $C$ other than $\{p,q\}$ and $\{r,s\}$ sum to 9 (Equation~\ref{symmetricpairs}). Suppose that $C$ has $t_0$ symmetric pairs of the form $\{0, 9\}$, $t_1$ pairs of the form $\{1, 8\}$, $t_2$ pairs of the form $\{2, 7\}$, $t_3$ pairs of the form $\{3,6\}$, and $t_4$ pairs of the form $\{4,5\}$. To construct $\overline{C}$, we begin by ordering the digits of these symmetric pairs that sum to nine from largest to smallest and append the $n-2\ell_d$ 9's to the beginning. We have:
    \[
   \underbrace{9\cdots9}_{n-2\ell_d} \underbrace{9\cdots9}_{t_0} \underbrace{8\cdots8}_{t_1} \underbrace{7\cdots7}_{t_2} \underbrace{6\cdots6}_{t_3} \underbrace{5\cdots5}_{t_4} \underbrace{4\cdots4}_{t_4} \underbrace{3\cdots3}_{t_3} \underbrace{2\cdots2}_{t_2} \underbrace{1\cdots1}_{t_1} \underbrace{0\cdots0}_{t_0}.
    \]
    To complete the construction of $\overline{C}$, it remains to insert $p, q, r$, and $s$ into the above string. 
    
Since the pairs $\{p, q\}$ and $\{r,s\}$ sum to 10 and 8, respectively, and their magnitudes are nested, the values $p, q, r$, and $s$ fit into the above string and remain rainbow pairs in $\overline{C}$. For example, if $\{p,q\}=\{\textcolor{red}{8,2}\}$ and $\{r,s\}=\{\textcolor{blue}{5,3}\}$, then
    \[
    \overline{C} = \underbrace{9\cdots9}_{n-2\ell_d} \underbrace{9\cdots9}_{t_0} \underbrace{8\cdots8}_{t_1}\textcolor{red}{\bf 8} \underbrace{7\cdots7}_{t_2} \underbrace{6\cdots6}_{t_3}{\textcolor{blue}{\bf 5}} \underbrace{5\cdots5}_{t_4} \underbrace{4\cdots4}_{t_4}{\textcolor{blue}{\bf 3}} \underbrace{3\cdots3}_{t_3} \underbrace{2\cdots2}_{t_2}\textcolor{red}{\bf 2} \underbrace{1\cdots1}_{t_1} \underbrace{0\cdots0}_{t_0}.
    \]
 Observe that all the symmetric pairs from $C$ remain paired as rainbow pairs in $\overline{C}$ above, including the pairs $\{p, q\}$ and $\{r,s\}$. Therefore, the symmetric pairs of $C$ coincide with the rainbow pairs of $\overline{C}$. 
 
Conversely, suppose that the digits $\{p, q\}$ and $\{r,s\}$ are {\em not} nested (meaning, neither of the following hold: $p\geq r\geq s\geq q~$ or $~r\geq p\geq q\geq s$). Observe that it is impossible for $p\geq q > r\geq s$. For, if this were the case, it would follow that $p=q=5$ and $r=s=4$ because $p+q=10$ and $r+s=8$. This, in turn, would imply $\deltaD{1} = \deltaD{\ell_d} = 5$ (since $\{p,q\} = \{\deltaD{1}, 10-\deltaD{1}\}$ and $\{r,s\} = \{\deltaD{\ell_d}-1, 9-\deltaD{\ell_d}\}$), which contradicts Lemma~\ref{notallsame}. Furthermore, note that since $p+q>r+s$, it is also impossible for $r>p>s>q$. Therefore, the only remaining possibility is that the digits of the symmetric pairs $\{p, q\}$ and $\{r,s\}$ are interleaved with each other as follows: $p>r>q>s$. We will prove that if $p>r>q>s$, then the symmetric pairs of $C$ and the rainbow pairs of $\overline{C}$ do not coincide. This will conclude the proof of (1) via a contrapositive argument and will also prove (b) implies (a) in part (2).

Suppose $p>r>q>s$. Because $p+q=10$ and $r+s=8$, it follows that
     $$2 = (p+q) - (r+s) = (p-r) +(q-s).$$ 
     Therefore, $p - r=1$ and $q -s=1$, which implies that $p+s=9$ and $q+r = 9$.    When inserted into the string of digits for $\overline{C}$, the fact that $p=r+1$ and $q=s+1$ and the fact that the pairs $\{p,s\}$ and  $\{q,r\}$ sum to 9 causes $\{p,s\}$ and $\{q,r\}$ to form rainbow pairs, while the remaining rainbow pairs of $\overline{C}$ coincide with the symmetric pairs in $C$. To illustrate, consider that if $\{p,q\}=\{\textcolor{red}{8,2}\}$ and $\{r,s\}=\{\textcolor{blue}{7,1}\}$, then:
    \[
    \overline{C} = \underbrace{9\cdots9}_{n-2\ell_d} \underbrace{9\cdots9}_{t_0} \underbrace{8\cdots8}_{t_1}\textcolor{red}{\bf 8}{\textcolor{blue}{\bf 7}} \underbrace{7\cdots7}_{t_2} \underbrace{6\cdots6}_{t_3} \underbrace{5\cdots5}_{t_4} \underbrace{4\cdots4}_{t_4} \underbrace{3\cdots3}_{t_3} \underbrace{2\cdots2}_{t_2} \textcolor{red}{\bf 2}{\textcolor{blue}{\bf 1}} \underbrace{1\cdots1}_{t_1} \underbrace{0\cdots0}_{t_0}.
    \]
    Note that the symmetric pairs of $C$ coincide with the rainbow pairs of $\overline{C}$ except for the symmetric pairs $\{8,2\}$ and $\{7,1\}$, which switch to rainbow pairs $\{8,1\}$ and $\{7,2\}$ above.  Thus we have shown that the symmetric pairs of $C$ and the rainbow pairs of $\overline{C}$ do not coincide. This concludes the proof of (1) and also proves (b) implies (a) in part (2). 
  
     Next we show (a) implies (c). Suppose that the symmetric pairs of $C$ and the rainbow pairs of $\overline{C}$ do not coincide. By our discussion above, $p+s=9$, and $q+r = 9$. Subtracting the two equations, we find $p-q = r-s$.
     Recall that $\{p,q\} = \{\deltaD{1}, 10-\deltaD{1}\}$ and $\{r,s\} = \{\deltaD{\ell_d}-1, 9-\deltaD{\ell_d}\}$. Thus, the equation $p-q = r-s$ can be restated in one of two ways. Either $10-2\deltaD{1} = -(10-2\deltaD{\ell_d})$ or else  $10-2\deltaD{1} = 10-2\deltaD{\ell_d}$. The latter option simplifies to $\deltaD{1} = \deltaD{\ell_d}$, which contradicts Lemma \ref{notallsame}. The former option simplifies to $\deltaD{1} + \deltaD{\ell_d} = 10$, as desired. Thus (a) implies (c). 
     
     We now show that (c) implies (b). Suppose that $\deltaD{1} + \deltaD{\ell_d} = 10$. Using this, the digits in the first and $\ell_d^{th}$ symmetric pairs of $C$ are: $\{p,q\}=\{\deltaD{1}, 10-\deltaD{1}\}$ and $\{r,s\}=\{9-\deltaD{1}, \deltaD{1}-1\}$. Observe that the digits in these pairs have the following relative magnitudes:
$$\deltaD{1} > \deltaD{1}-1 > 10-\deltaD{1}> 9-\deltaD{1},$$ 
where the middle inequality holds because $\deltaD{1}> \deltaD{\ell_d}$ and $\deltaD{1} + \deltaD{\ell_d} = 10$ imply that $\deltaD{1}>5$. Thus, $p>r>q>s$, and we have proved that (c) implies (b). 
     \end{proof}

\begin{theorem}\label{sum}
    Except for $\{53955, 59994\}$, the two numbers in a Kaprekar 2-cycle have the same digit sum.
\end{theorem}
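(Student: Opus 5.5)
The plan is to compute both digit sums directly from Equations~\ref{D} and~\ref{C}. This reduces the theorem to showing $\ell_c=\ell_d$, which I would then attack with the rainbow-pair structure of Lemma~\ref{inpairs}.

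\textbf{Step 1 (reduction to $\ell_c=\ell_d$).} Summing the digits of $D$ in Equation~\ref{D}, the $\deltaC{i}$ cancel in pairs:
\[
\sum_{i=1}^{\ell_c}\deltaC{i}-1+9(n-2\ell_c)+\sum_{i=2}^{\ell_c}(9-\deltaC{i})+(10-\deltaC{1}) = 9(n-2\ell_c)+9\ell_c = 9(n-\ell_c).
\]
Symmetrically, Equation~\ref{C} gives digit sum $9(n-\ell_d)$ for $C$. By Theorem~\ref{l=1}, the exceptional cycle $\{53955,59994\}$ is the only one with $\ell_c=1$ or $\ell_d=1$, and it indeed has $\ell_c\neq\ell_d$. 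So for every other cycle we may assume $\ell_c,\ell_d\ge 2$, and the theorem is equivalent to $\ell_c=\ell_d$.

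\textbf{Step 2 (reading $\ell_c$ off $\overline{C}$ via rainbow pairs).} Apply Lemma~\ref{inpairs} to $C$. In every case, $\overline{C}$ consists of $n-2\ell_d$ leading 9's followed by $\ell_d$ rainbow pairs occupying positions $n-2\ell_d+i$ and $n+1-i$. These pairs are nested. They all sum to 9 in case (2). In case (1), one pair sums to 10, one sums to 8, and the rest sum to 9. By definition, $\ell_c$ is determined by the length $n-2\ell_c$ of the central block of equal digits of $\overline{C}$. A rainbow pair summing to 9 has two distinct digits, so within the rainbow region equal adjacent digits can only come from the structure near its centre: the innermost pair, or a pair $\{5,5\}$ summing to 10, or a pair $\{4,4\}$ summing to 8. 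I would do a case split on whether $C$ contains the digit 9 beyond the forced block.
\begin{itemize}
\item If $\overline{C}$ has no extra 9's, the rainbow and corresponding pairs are the same shape. I would then show that the corresponding pairs $i\le\ell_d$ are all strict, giving $\ell_c\ge\ell_d$.
\item In the other case, the shift by $n-2\ell_d$ positions changes which digits are corresponding. I would then use the nesting of the rainbow pairs to count exactly how many corresponding pairs are strict.
\end{itemize}

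\textbf{Step 3 (closing by symmetry).} Step 2 should give $\ell_c\ge\ell_d$ in general, perhaps with equality forced outright. The same argument with the roles of $C$ and $D$ exchanged (the Remark after Lemma~\ref{inpairs}) gives $\ell_d\ge\ell_c$, hence equality. If the inequality in Step 2 cannot be obtained cleanly, my fallback is a counting argument. I would count the 9's in $D$: Equation~\ref{D} shows there are $n-2\ell_c$ of them in the middle, plus possibly $\deltaC{1}=9$ and a few forced ones. I would then compare this count with the $n-2\ell_d$ leading 9's of $\overline{D}$ and the extra 9's coming from rainbow pairs $\{9,0\}$. Theorem~\ref{smallest} controls where the smallest digit, and hence any 0, can sit.

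The main obstacle is Step 2. Translating the rainbow-pair description, which is anchored after the block of $n-2\ell_d$ nines, into the corresponding-pair description, which is anchored symmetrically, requires care. In particular I need to rule out configurations where the leading 9's spill into the central equal block or where a $\{5,5\}$ or $\{4,4\}$ pair lengthens it. I expect Lemma~\ref{notallsame} and Theorem~\ref{smallest} to be needed to exclude these.
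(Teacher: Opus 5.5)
Your Step 1 is correct and matches how the paper begins, and closing by symmetry in Step 3 is equivalent to the paper's ``without loss of generality $\ell_c>\ell_d$.'' The gap is Step 2. It carries the entire content of the theorem, but you only describe what you would like it to show. The reduction can be made sharp without any case split on extra 9's. If $\ell_c<\ell_d$, the central block of equal digits of $\overline{C}$ has length $n-2\ell_c\ge n-2\ell_d+2$. So it contains positions $n-\ell_d$ and $n-\ell_d+1$, which form the innermost rainbow pair. That pair is therefore $\{x,x\}$ with $2x\in\{8,10\}$, and since only one rainbow pair sums to $2x$, in fact $\ell_d=\ell_c+1$. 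Incidentally, your claim that with no extra 9's the rainbow and corresponding pairs ``are the same shape'' is false. They are offset by $n-2\ell_d$ positions and coincide only when $\overline{C}$ contains no 9 at all. So the theorem is equivalent to ruling out $x=4$ and $x=5$ in this configuration. That is exactly the part you leave as an expectation that Lemma~\ref{notallsame} and Theorem~\ref{smallest} will handle.

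Nothing in the proposal shows how those results would do this, and in the paper this is where nearly all the work lies. The paper works in the orientation $\ell_c>\ell_d$, with the self-pair in $\overline{D}$ and $u=n-2\ell_c$.
\begin{itemize}
\item For $x=4$, it shows $u=0$ and uses Lemma~\ref{inpairs} to move the $\{4,4\}$ rainbow pair to the innermost symmetric pair of $D$. It then deduces a corresponding pair $\{b+5,b\}$ in $\overline{C}$ with $b=4$, and uses the rainbow sums to force $\overline{C}=9\cdots 9\,4\,4\,1\,0\cdots 0$. Explicit subtraction then shows $k(k(C))\neq C$.
\item For $x=5$, the $\{5,5\}$ pair is the outermost symmetric pair of $D$, which forces the smallest digit of $C$ to be 4. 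The rainbow partner $y_{u+3}$ of that 4 must be 5 or 6, since 4 is excluded by Lemma~\ref{morethan2}. Each possibility is eliminated by direct computation, with further subcases $w=0$, $w=1$, $w\ge 2$.
\end{itemize}
Your fallback of counting 9's does not replace this. It only relates the number of 9's to the number of 0's, which Theorem~\ref{smallest} controls only partially, and you give no mechanism that produces a contradiction. Also, the leading 9-block of $\overline{D}$ has length $n-2\ell_c$, not $n-2\ell_d$.
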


\begin{proof}
Suppose $\{C,D\}$ is a Kaprekar 2-cycle other than $\{53955, 59994\}$. Theorem~\ref{l=1} implies $\ell_d, \ell_c \geq 2$. By Equations~\ref{D} and~\ref{C}, the digit sums of $C$ and $D$ are $9(n-\ell_d)$ and $9(n-\ell_c)$, respectively. Suppose that $C$ and $D$ do not have the same digit sum. Without loss of generality, this implies $\ell_c > \ell_d$.

In light of Equations~\ref{D} and~\ref{C}, there is a center block of 9's in $C$ and $D$ with $n-2\ell_d$ and $n-2\ell_c$ digits, respectively. Let $u=n-2\ell_c$ denote the size of the block in $D$. The number of digits outside these center blocks of 9's in $C$ and $D$ is even ($2\ell_d$ and $2\ell_c$, respectively), so the size of the center block of $C$ has the same parity as that of $D$. Thus, $n-2\ell_d\geq u+2$. Thus, $\overline{D}$ has the form:
\[
\overline{D} = \underbrace{9 \cdots 9}_u ~~\underbrace{\cdots}_{v}~~ \underbrace{x_D \cdots x_D}_{\geq u+2} ~~\underbrace{\cdots}_{u+v}
\]
for some $v\geq 0$ and some digit $x_D$.
Notice that when we form the rainbow pairs of $\overline{D}$, the value $x_D$ pairs with itself. By Lemma~\ref{inpairs}, the digits in a rainbow pair sum up to 8, 9, or 10, and at most one pair sums to 8 or 10. Thus, $x_D+x_D$ must be 8 or 10 (hence $x_D$ is either 4 or 5), and there is exactly one $\{x_D, x_D\}$ rainbow pair. So $\overline{D}$ and $\overline{C}$ have the forms  
\[
\overline{D} = \underbrace{9 \cdots 9}_u ~~\underbrace{\cdots}_{v}~~ \underbrace{x_D \cdots x_D}_{u+2} ~~\underbrace{\cdots}_{u+v} \text{ and} \;\;\; \overline{C} = \underbrace{9 \cdots 9}_{u+2} ~~\underbrace{\cdots}_{w}~~ \underbrace{x_C \cdots x_C}_{u} ~~\underbrace{\cdots}_{u+w+2}
\]
for some digit $x_C$ and where $v\geq0$ and $w\geq0$. The integers $C$ and $D$ have the same number of digits, so $v=w+1$. 

Suppose $x_D=4$. Then $u=0,$ since we can have at most one $\{4,4\}$ rainbow pair, and 4 cannot form a rainbow pair with any digit smaller than 4. Then, $D$ must also have a symmetric $\{4,4\}$ pair by Lemma~\ref{inpairs}. And it will be the innermost symmetric pair (Equation~\ref{D}). Since $\overline{C}-\underline{C}$ has a pair 4's in the center, it follows that:
$
\overline{C} = 99 ~~\underbrace{\cdots}_{w-1}~(b+5) ~(b)\underbrace{\cdots}_{w+1}
$
where $b\leq 4$. On the other hand, if we form the rainbow pairs in $\overline{C}$, notice that $b$ pairs with a digit less than or equal to $b$. Again, since the digits in rainbow pairs must sum to 8, 9, or 10 and since $b\geq 4$, this forces $b=4$ and $b+5=9$. The digit $b=4$ must form a rainbow pair with another 4, and therefore by Lemma~\ref{inpairs}, there is a rainbow pair that sums to 10, which by necessity must be $\{9,1\}$, since the only digit higher than $b=4$ in $\overline{C}$ is 9. All the remaining 9's in rainbow pairs must be paired with 0, since the sum of the digits must be 9. To summarize:
$\overline{C} = 99 ~~\underbrace{9\cdots 9}_{w-1}9 ~ 4~ 4 ~ 1 \underbrace{0\cdots 0}_{w-1}.
$
Calculating $\overline{C}-\underline{C}$, we have $D=\underbrace{9\cdots 9}_{w-1} 854441\underbrace{0\cdots 0}_{w-2} 1$, but then $\overline{D}-\underline{D}$ contains the digit 8, whereas $C$ does not, a contradiction.

We are left with the possibility that $x_D=5$ which implies $D$ has a rainbow pair involving two 5's. By Lemma~\ref{inpairs}, $D$ also has a $\{5,5\}$ symmetric pair and the pair must be the outermost digits of $D$ (Equation~\ref{D}). Since $\overline{C}$ starts with a 9, $\underline{C}$ must start with a 4 in order for the first digit of $D$ to be 5. Thus the smallest digit in $C$ is a 4. We have:
$
\overline{C} = \underbrace{9\cdots 9}_{u+2} ~y_{u+3}~\underbrace{\cdots}_{w-1}~\underbrace{x_C\cdots x_C}_{u}\underbrace{\cdots}_{u+w+1}  4
$.
Notice that $y_{u+3}$ forms a rainbow pair with 4, so $y_{u+3}$ must be a 4, 5, or 6. 
We know $y_{u+3}\neq 4$, since $y_{u+3}=4$ would force all the digits after $y_{u+3}$ to be 4, but $C$ must have more than two distinct digits (Lemma~\ref{morethan2}).  

Suppose $y_{u+3}=5$. In this case, it is impossible to have both a rainbow pair summing to 8 and another to 10, so all rainbow pairs must sum to 9.
\[
\overline{C} = \underbrace{9\cdots 9}_{u+2}~ \underbrace{5\cdots 5}_{u+w+1} ~ \underbrace{4\cdots 4}_{u+w+1}
\]
Then, calculating $\overline{C}-\underline{C}$ we have three outcomes, depending on the value of $w$.
$$
\overline{C}-\underline{C} = 
\begin{cases}
    \underbrace{5\cdots 5}_{u+1}~3~\underbrace{9\cdots 9}_{u}~5~\underbrace{4\cdots 4}_{u}~5 & \text{ if } w = 0\\
    \underbrace{5\cdots 5}_{u+1}~4~\underbrace{9\cdots 9}_{u+2}\underbrace{4\cdots 4}_{u+1}~5 & \text{ if } w = 1\\
    \underbrace{5\cdots 5}_{u+2}~\underbrace{1\cdots 1}_{w-2}~0~\underbrace{9\cdots 9}_{u+2}\underbrace{8\cdots 8}_{w-1}\underbrace{4\cdots 4}_{u+1}~5 & \text{ if } w \geq 2\\
\end{cases}
$$
If $w=0$, then the largest digit of $D$ is 9, and $\deltaD{1}=9-3=6$, which is not a digit in $C$, a contradiction. 
When $w=1$, then $C=D$, and we have a Kaprekar constant, rather than a 2-cycle. 
Finally, if $w\geq2$, then $10-\deltaD{1}=10-9=1$, but there is no digit equal to 1 in $C$. Thus, $y_{u+3}\neq 5$.

The final possibility is $y_{u+3}=6$. Since this digit forms a rainbow pair with sum 10, $\overline{C}$ has a pair that sums to 8, as well. This rainbow pair must be formed by digits between 6 and 4, so it consists of two 4's. Thus, 
$
\overline{C}= \underbrace{9\cdots 9}_{u+2} 6 \underbrace{5\cdots 5}_{u+w-1} ~ 4 ~ 4 ~ \underbrace{4\cdots 4}_{u+w-1} ~4 
$
By Lemma~\ref{inpairs}, since we have a rainbow pair with two 4's in $\overline{C}$, we must have a symmetric pair in $C$ with two 4's, as well. As we argued before, this implies $\overline{D}$ has a corresponding pair $b$ and $b+5$. Again, as argued to before, $b=4$, so then 
\[
\overline{D}= \underbrace{9\cdots 9}_{u+w+1}~ \underbrace{5\cdots 5}_{u+2} ~4 \underbrace{\cdots}_{u+w}  
\]
Because $\overline{C}$ has a $\{6,4\}$ rainbow pair, $C$ also has a symmetric pair $\{6,4\}$. Thus, $C$ either begins with 6 and ends with 4 or vice versa. Note $\overline{D}$ begins with a 9, so the smallest digit in $D$ is 3 or 5, in order for the first digit of $C$ to be 4 or 6. Since $D$ contains the digit 4, we conclude that the smallest digit of $D$ is 3. This implies $\overline{D}$ has a $\{9,3\}$ rainbow pair and hence a $\{9,3\}$ symmetric pair. But that is impossible because the sum of the pair is 12 (Equation~\ref{symmetricpairs}).  Thus, $y_{u+3}\neq 6$, and so there is no possible value for $y_{u+3}$ which implies $x_D\neq 5$. 

Thus, we conclude that $\ell_c = \ell_d$, and a Kaprekar 2-cycle $\{C,D\}$ with different digit sums does not exist other than $\{53955, 59994\}$.
\end{proof}

The following corollary follows directly from the proof of Theorem~\ref{sum}. In light of Corollary~\ref{ell}, we write $\ell$ in lieu of $\ell_c$ and $\ell_d$ moving forward. 
 \begin{corollary}\label{ell}
     Except for $\{53955, 59994\}$, every Kaprekar 2-cycle $\{C,D\}$ satisfies $\ell_c = \ell_d$. 
 \end{corollary}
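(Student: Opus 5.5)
The corollary follows from the digit-sum computation at the start of the proof of Theorem~\ref{sum}, together with Theorem~\ref{l=1}.

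Let $\{C,D\}$ be a Kaprekar 2-cycle other than $\{53955, 59994\}$. By Theorem~\ref{l=1}, neither $\ell_c$ nor $\ell_d$ equals $1$ (applying the theorem with the roles of $C$ and $D$ swapped if needed). Hence $\ell_c,\ell_d\geq 2$, and the digit descriptions in Equations~\ref{D} and~\ref{C} are valid.

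Next I read off the digit sums from those equations. In $D$, the symmetric pair of positions $(i,n+1-i)$ with $1<i<\ell_c$ contributes $\deltaC{i}+(9-\deltaC{i})=9$. The pair $i=1$ contributes $10$, the pair $i=\ell_c$ contributes $8$, and each of the $n-2\ell_c$ central nines contributes $9$. Altogether the digit sum of $D$ is
\[
10+8+9(\ell_c-2)+9(n-2\ell_c)=9(n-\ell_c).
\]
The same computation applied to Equation~\ref{C} gives digit sum $9(n-\ell_d)$ for $C$.

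Theorem~\ref{sum} says these two sums agree, so $9(n-\ell_c)=9(n-\ell_d)$, and therefore $\ell_c=\ell_d$.

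The only point needing care is that Equations~\ref{D} and~\ref{C} require $\ell_c,\ell_d\geq 2$. If $\ell=1$, the $\ell^{th}$ pair and the first pair coincide, and the single outer pair contributes $(\deltaC{1}-1)+(10-\deltaC{1})=9$ instead of $10+8$. This degenerate case is exactly what Theorem~\ref{l=1} excludes. With that case removed, the argument is immediate, and no step is a real obstacle.
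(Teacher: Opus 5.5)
Your proposal is correct and follows the paper's route: the paper also invokes Theorem~\ref{l=1} to get $\ell_c,\ell_d\ge 2$, reads off the digit sums $9(n-\ell_d)$ and $9(n-\ell_c)$ of $C$ and $D$ from Equations~\ref{C} and~\ref{D}, and notes that equal digit sums (Theorem~\ref{sum}) amount exactly to $\ell_c=\ell_d$. Your caveat about $\ell=1$ is harmless but not needed for the digit-sum formula itself, since in that case the sum is $9+9(n-2)=9(n-1)$, which still equals $9(n-\ell)$.
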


\section{Kaprekar 2-cycles and the digit 9}~\label{no9+samesmallest}

In this section, we prove that both numbers in a Kaprekar 2-cycle must contain the digit 9. Note that this is not the case for Kaprekar constants. Numbers of the form $76_{x+1}43_x1$ for $x\geq 0$ form an infinite family of Kaprekar constants which do not contain the digit 9 (see Family (1) in Theorem~\ref{dolan}). 
We begin by proving a weaker result as follows.

\begin{theorem}\label{9inboth}
    Let $\{C,D\}$ be a Kaprekar 2-cycle. If one integer (say, $C$) contains the digit 9, then so does $D$.
\end{theorem}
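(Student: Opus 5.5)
The plan is to read the 9's off the explicit digit formulas in Equations~\ref{D} and~\ref{C}, using Corollary~\ref{ell} to synchronize the center blocks. First, the exceptional cycle $\{53955, 59994\}$ is checked by hand: both numbers contain a 9. So assume $\{C,D\}$ is any other Kaprekar 2-cycle. Then Theorem~\ref{l=1} gives $\ell_c,\ell_d\ge 2$, and Corollary~\ref{ell} gives $\ell_c=\ell_d=\ell$.

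The proof splits according to whether $n>2\ell$ or $n=2\ell$. If $n>2\ell$, Equation~\ref{D} shows that $D=\overline{C}-\underline{C}$ has a central block of $n-2\ell_c=n-2\ell>0$ nines. Hence $D$ contains 9 and we are done, without even using the hypothesis on $C$.

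So the remaining case is $n=2\ell$. Here neither Equation~\ref{C} nor~\ref{D} has a central block of 9's, and every digit of $C$ is one of
$$\deltaD{1},\ldots,\deltaD{\ell-1},\ \deltaD{\ell}-1,\ 9-\deltaD{\ell},\ldots,9-\deltaD{2},\ 10-\deltaD{1}.$$
Since $C$ contains a 9, I determine which of these can equal 9.
\begin{itemize}
\item The digit $\deltaD{\ell}-1$ is at most $8$.
\item Each $9-\deltaD{i}$ with $2\le i\le \ell$ is at most $8$, because $\deltaD{i}\ge \deltaD{\ell}\ge 1$ by the definition of $\ell$.
\item The digit $10-\deltaD{1}$ equals $9$ only if $\deltaD{1}=1$. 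Then the chain $\deltaD{1}\ge\cdots\ge\deltaD{\ell}\ge 1$ forces $\deltaD{1}=\deltaD{\ell}$, contradicting Lemma~\ref{notallsame} (applied to $D$, which the lemma covers by symmetry of the cycle).
\end{itemize}
Therefore $\deltaD{i}=z_i-z_{n+1-i}=9$ for some $1\le i\le \ell-1$. This forces $z_i=9$, so $D$ contains the digit 9.

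The only step needing care is the last case, $n=2\ell$, specifically ruling out that the 9 in $C$ comes from the last digit $10-\deltaD{1}$. That is exactly where Lemma~\ref{notallsame} is needed. The rest consists of routine bounds on the digit expressions.
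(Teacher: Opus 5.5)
Your proof is correct and is essentially the paper's argument. Once $n=2\ell$, you run through the digit expressions of Equation~\ref{C} and show that the only possible source of a 9 in $C$ is some $\deltaD{i}=9$, which forces $z_i=9$. The differences are minor: you argue directly rather than by contradiction, you treat the exceptional cycle and the case $n>2\ell$ explicitly, and you rule out $10-\deltaD{1}=9$ via Lemma~\ref{notallsame}, whereas the paper observes that $z_1=z_n+1$ would leave $D$ with only two distinct digits, contradicting Lemma~\ref{morethan2}.
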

\begin{proof}
Suppose that $C$ contains the digit 9 and that $D$ does not. Because $D$ does not contain the digit 9, $n=2\ell$ (considering Equation~\ref{D}). 
Because $C$ contains the digit 9 and $\overline{D} - \underline{D} = C$, it must be the case that one of the following is true (considering Equation~\ref{C}):
\begin{enumerate}
    \item $\deltaD{i}=z_i - z_{n+1-i} = 9$ for some $1\leq i<\ell$
    \item $\deltaD{\ell}-1=z_\ell - z_{\ell+1}-1 = 9$
    \item $9-\deltaD{i}=9 - (z_i - z_{n+1-i}) = 9$ for some $1<i\leq \ell$
    \item $10-\deltaD{1}=10 - (z_1 - z_{n}) = 9$
\end{enumerate}
Option (1) implies $z_i = 9$ and $z_{n+1-i} = 0$, which is not the case since $D$ has no 9's. Option (2) is not possible because each $z_i$ represents a single digit. Option (3) is not possible since $z_i > z_{n+1-i}$. In the final option,  $z_1= z_n+ 1$ implies that $D$ contains only two digits: $z_1$ and $z_1 - 1$, which contradicts Lemma~\ref{morethan2}. 
\end{proof}

With the above result, it remains to show that Kaprekar 2-cycles where neither $C$ nor $D$ contains the digit 9 do not exist. 
Similarly to~\cite{dolan}, we make the following additional definitions.

\begin{definition}\label{PQ}
    Let $\PC{i}$ denote the number of corresponding pairs in $\overline{C}$ with difference $i$. That is,
    \[\PC{i} = |\{\deltaC{j} : \deltaC{j} = i\}|\]
    In addition, define
\begin{equation*}
   \pC{1}(i)= \begin{cases}
        1 & \text{if } \deltaC{1}=i \\
        0 & \text{otherwise} \\
    \end{cases} \hspace{1cm}
      \pC{\ell}(i)= \begin{cases}
        1 & \text{if } \deltaC{\ell}=i \\
        0 & \text{otherwise} 
    \end{cases} 
\end{equation*}
and denote the sum of these values by $\pC{}(i)= \pC{1}(i) + \pC{\ell}(i)$.

Similarly, define 
$\PD{i} = |\{\deltaD{j} : \deltaD{j} = i\}|$. Define 
$\pD{1}(i)$ to be 1 or 0 according to whether or not $\deltaD{1}=i$, and similarly for $\pD{\ell}(i)$. Finally, define $\pD{}(i)= \pD{1}(i) + \pD{\ell}(i)$.
\end{definition}

\begin{lemma}\label{details}
   Let $\{C, D\}$ be a Kaprekar 2-cycle.   
    \begin{enumerate}
        \item There exist exactly two values of $i$ such that $\pC{}(i) = 1$. For all other values of $i$, $\pC{}(i) = 0$. Similarly for $\pD{}(i)$.
        \item If neither $C$ nor $D$ contains the digit 9, then $\pC{}(5) = 0$ and $\pD{}(5)=0$.
    \end{enumerate} 
    \end{lemma}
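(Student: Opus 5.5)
Part (1) follows almost directly from the definitions together with Lemma~\ref{notallsame}. By definition $\pC{1}(i)=1$ for exactly one value of $i$, namely $i=\deltaC{1}$, and $\pC{\ell}(i)=1$ for exactly one value, namely $i=\deltaC{\ell}$. Hence $\pC{}(i)$ is nonzero only for $i\in\{\deltaC{1},\deltaC{\ell}\}$. Lemma~\ref{notallsame} gives $\deltaC{1}\neq\deltaC{\ell}$, so these two values are distinct and each receives exactly $1$. Every other $i$ has $\pC{}(i)=0$. The same argument, applied to $D$ with $\deltaD{1}\neq\deltaD{\ell}$ (Lemma~\ref{notallsame} with the roles of $C$ and $D$ swapped), gives the statement for $\pD{}$. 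Here I use Corollary~\ref{ell} to write $\ell$ for both, which is harmless since the exceptional cycle $\{53955,59994\}$ contains a 9 and is in any case only relevant to part (2).

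For part (2), assume neither $C$ nor $D$ contains 9. By Equation~\ref{D}, $D$ has $n-2\ell$ central 9's, so $n=2\ell$, and likewise for $C$. Hence by Equation~\ref{C}, $C$ consists exactly of the digits
\[
\deltaD{1},\ \deltaD{2},\dots,\deltaD{\ell-1},\ \deltaD{\ell}-1,\ 9-\deltaD{\ell},\dots,9-\deltaD{2},\ 10-\deltaD{1}.
\]
I would rule out $\deltaD{1}=5$ and $\deltaD{\ell}=5$ separately, using that the largest digit $z_1$ of $\overline{D}$ is at most $8$, and similarly $y_1\le 8$.

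\emph{Case $\deltaD{\ell}=5$.} Then $C$ contains the digit $\deltaD{\ell}-1=4$ and the digit $9-\deltaD{\ell}=4$. Since $\deltaD{1}>\deltaD{\ell}$ (Lemma~\ref{notallsame} and monotonicity), we get $\deltaD{1}\ge 6$. So $C$ contains $\deltaD{1}\ge 6$ and $10-\deltaD{1}\le 4$. Now I would use Theorem~\ref{smallest} to locate the smallest digit. In $C$ it must be the $\ell$-th digit, which is $\deltaD{\ell}-1=4$, or else lie in Region A, which forces a 9 in $D$ (as shown in that proof, $9-\deltaD{i}=s$ forces $z_i=9$). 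Since $D$ has no 9, the smallest digit of $C$ is $4$. But then $10-\deltaD{1}\ge 4$, so $\deltaD{1}\le 6$, giving $\deltaD{1}=6$. Hence $10-\deltaD{1}=4$ is the last digit of $C$, which must exceed $s$ by the argument in Theorem~\ref{smallest} (last digit $\ge 1+s$). This contradicts $s=4$.

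\emph{Case $\deltaD{1}=5$.} Then $C$ contains $\deltaD{1}=5$ and $10-\deltaD{1}=5$, and all $\deltaD{i}\le 5$. Again the smallest digit of $C$ sits at position $\ell$, namely $\deltaD{\ell}-1$. The largest digit of $C$ is then $\max(5,\,9-\deltaD{\ell})$. I would compute $\deltaC{1}=y_1-y_n$, which equals $\max(5,9-\deltaD{\ell})-(\deltaD{\ell}-1)$, and compare it with the digits of $D$. In particular the last digit of $D$, $10-\deltaC{1}$, and the first digit $\deltaC{1}$ must be consistent with $D$'s digit multiset, which by symmetry has $\deltaC{1}=5$ (or is ruled out by the same Case~1 argument applied to $D$). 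Setting $\deltaC{1}=5$ gives $\deltaD{\ell}=4$ when $9-\deltaD{\ell}\ge 5$, or forces $\deltaD{\ell}=1$ with $5-0=5$. I would then check these few configurations directly, expecting a contradiction via Lemma~\ref{morethan2} or digit counts.

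The main obstacle is this second case. It may require a small finite enumeration, using that all differences lie in $\{1,\dots,5\}$ and the symmetric-pair sums from Equation~\ref{symmetricpairs}, to show that no consistent pair $\overline{C},\overline{D}$ exists.
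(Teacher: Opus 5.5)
Part (1) is correct and is the paper's argument. One correction to your aside: the exceptional cycle is exactly where part (1) \emph{fails}. For $C=53955$ we have $\overline{C}=95553$ and $\ell_c=1$, so $\pC{}(6)=2$. That cycle is excluded by the paper's standing assumption $\ell_c,\ell_d>1$, on which Lemma~\ref{notallsame} also depends; Corollary~\ref{ell} plays no role in part (1).

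Part (2) has a genuine gap.
\begin{itemize}
\item \textbf{Case $\deltaD{\ell}=5$.} The step ``the smallest digit of $C$ is $4$'' misapplies Theorem~\ref{smallest}. That theorem locates the smallest digit $s$ of the \emph{cycle}. The Region~A step you quote ($9-\deltaD{i}=s$ forces $z_i=9$) needs $z_{n+1-i}\ge s$, i.e.\ that $s$ is at most every digit of $D$. So your argument only works when $s$ occurs in $C$; then $s=4$ and your last-digit contradiction is fine. If instead $s$ occurs once in $D$ and not in $C$ (alternative (2) of Theorem~\ref{smallest}), nothing you wrote determines the minimum of $C$. In particular you have not excluded $\deltaD{1}\ge 7$, where the last digit $10-\deltaD{1}\le 3$ of $C$ is below $4$.
\item \textbf{Case $\deltaD{1}=5$.} This case is not carried out. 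The claim ``by symmetry $\deltaC{1}=5$'' has no basis, since nothing links $\deltaC{1}$ to $\deltaD{1}$ in this setting. The proposed enumeration is also not evidently finite, because $n$, and hence the multiplicity of each difference, is unbounded.
\end{itemize}

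The missing idea is to use Lemma~\ref{inpairs}, which makes any smallest-digit analysis unnecessary.
\begin{itemize}
\item If $\deltaD{1}=5$, then $C$ has the symmetric pair $\{5,5\}$ (the pair summing to 10). If $\deltaD{\ell}=5$, then $C$ has the symmetric pair $\{4,4\}$ (the pair summing to 8).
\item The interleaved configuration of Lemma~\ref{inpairs}(2) requires both of these pairs to consist of two distinct digits. So it cannot occur here, and the symmetric pairs of $C$ coincide with the rainbow pairs of $\overline{C}$. In particular $\overline{C}$ has a rainbow pair of two equal digits.
\item Since neither number contains a $9$, we have $n=2\ell$. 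Then the rainbow pairs of $\overline{C}$ are exactly its corresponding pairs $\{y_i,y_{n+1-i}\}$, $i\le\ell$.
\item Every corresponding pair satisfies $y_i\ge y_\ell>y_{n+1-\ell}\ge y_{n+1-i}$, so none consists of two equal digits. This is a contradiction in both cases at once.
\end{itemize}
Swapping the roles of $C$ and $D$ gives $\pC{}(5)=0$.
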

\begin{proof}
  Each of the functions $\pC{1}(i)$ and $\pC{\ell}(i),$ are equal to 1 for one value of $i$ and are 0 elsewhere. Thus, $\pC{}(i) = \pC{1}(i) + \pC{\ell}(i)$ can be nonzero for 1 or 2 values of $i$. If $\pC{}(i)$ is nonzero for exactly one value of $i$, then $\deltaC{1} = \deltaC{\ell}=i.$ This contradicts Lemma~\ref{notallsame}. So $\pC{}(i)$ is nonzero for exactly two values of $i$ and equals 1 at those values. This proves (1).

 Now consider statement (2). Suppose neither $C$ nor $D$ contains the digit 9, and suppose for contradiction that $\pC{}(5) = 1$. Then either $\deltaC{1}=5$ or $\deltaC{\ell}=5$. Suppose $\deltaC{1}=5$. Since $D$ begins with digit $\deltaC{1}$ and ends with digit $10-\deltaC{1}$, it follows that $D$ has a symmetric pair $\{5,5\}$. 
     By Lemma~\ref{inpairs}, since $p=q=5$, the symmetric pairs of $D$ and the rainbow pairs of $\overline{D}$ coincide. Therefore, there is a rainbow pair of 5's in $\overline{D}$. Because there are no 9's in $D$, rainbow pairs coincide with corresponding pairs of $\overline{D}$. However, there cannot be corresponding pairs of a digit with itself, so we have a contradiction. If $\deltaC{\ell} = 5,$ we reach a similar contradiction with a pair of 4's. Thus $\pC{}(5) = 0$.   A parallel argument establishes $\pD{}(5) = 0$.
\end{proof}

\begin{lemma}\label{not10}
Let $\{C, D\}$ be a Kaprekar 2-cycle in which neither $C$ nor $D$ contains the digit 9. Then the symmetric pairs of $C$ and corresponding pairs of $\overline{C}$ coincide. Similarly for $D$.
\end{lemma}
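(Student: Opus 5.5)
The plan is to reduce the statement to ruling out one exceptional configuration from Lemma~\ref{inpairs}, and then to kill that configuration with an explicit digit computation.

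\textbf{Reduction.} Since $C$ contains no 9, Equation~\ref{C} forces the central block of 9's in $C$ to be empty, so $n=2\ell$. The same argument applied to $D$ via Equation~\ref{D} gives the same conclusion. With no 9's in $\overline{C}$, the rainbow pairs of $\overline{C}$ are exactly its corresponding pairs, as noted when rainbow pairs were defined. So it suffices to show that the symmetric pairs of $C$ coincide with the rainbow pairs of $\overline{C}$. By Lemma~\ref{inpairs}(2), the only alternative is $\deltaD{1}+\deltaD{\ell}=10$. In that case every corresponding pair of $\overline{C}$ sums to 9, and $\deltaD{1}>5$. I will assume this exceptional case and derive a contradiction; the argument for $D$ is identical with the roles swapped.

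\textbf{Explicit form of $\overline{C}$ and $D$.} In the exceptional case, $y_{n+1-i}=9-y_i$ for all $i\le \ell$. Since $C$ has no 9, it has no 0, and $y_\ell>y_{\ell+1}=9-y_\ell$ forces $y_\ell\ge 5$. Hence $\deltaC{i}=2y_i-9$ is odd and positive for each $i$. Plugging into Equation~\ref{D} gives all digits of $D$ explicitly:
\[
2y_i-9 \ (i<\ell),\quad 2y_\ell-10,\quad 18-2y_\ell,\quad 18-2y_i \ (1<i<\ell),\quad 19-2y_1 .
\]
Next I identify $p,q,r,s$ for $C$. We have $p=\deltaD{1}$, $q=10-\deltaD{1}$, $r=\deltaD{1}-1$ and $s=9-\deltaD{1}$, with $p>r>q>s$. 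Moreover $\{p,s\}$ is the outermost rainbow pair, so $y_1=p=\deltaD{1}$ and $y_n=s$. This says that the largest digit of $C$ equals $z_1-z_n$. Lemma~\ref{details}(2) also rules out $5$ as a value of $\deltaC{1},\deltaC{\ell},\deltaD{1},\deltaD{\ell}$, so $y_1\ge 6$, $y_1\neq 7$, and $y_\ell\neq 7$.

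\textbf{Contradiction.} Two constraints should pin the situation down, and I would use them in this order.
\begin{enumerate}
\item Theorem~\ref{smallest}. The smallest digit of $C$ is $s=9-y_1$, and it sits in the $\ell^{th}$ position of $C$, which is consistent. The smallest digit of $D$ is then $\min(2y_\ell-10,\,18-2y_1,\dots)$, and it must sit in the $\ell^{th}$ position of $D$, i.e.\ equal $2y_\ell-10$, or else occur in Region A with matching multiplicities. Comparing this with $s$ gives equations linking $y_1$ and $y_\ell$.
\item The requirement $y_1=z_1-z_n$, where $z_1$ and $z_n$ are the largest and smallest of the explicit digits of $D$ listed above.
\end{enumerate}
Together these leave only a few values, $y_1\in\{6,8\}$ and $y_\ell\in\{5,6,8\}$. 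For each one I would compute $k(D)$ and check that it does not reproduce $C$. A quick tool here is parity: every pair of $C$ has one odd and one even digit, whereas the digits of $k(D)$ coming from the pairs of $\overline{D}$ must match this pattern.

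I expect the main obstacle to be this final case analysis. The difficulty is keeping track of which digit of $D$ is largest and which is smallest as $y_1$ and $y_\ell$ vary. I would first try to force $y_\ell=y_1$, which contradicts Lemma~\ref{notallsame}, using the equality $\deltaD{\ell}=10-\deltaD{1}$ together with the description of $D$'s own corresponding pairs.
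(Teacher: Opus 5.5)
Your reduction and setup are correct and take a genuinely different route from the paper. However, the proposal stops exactly where the content of the lemma lies: the contradiction is never derived, only promised (``I would compute $k(D)$ and check''). The plan you sketch for that step is also inaccurate as stated:
\begin{itemize}
\item Theorem~\ref{smallest} concerns the smallest digit of the whole cycle, and whenever that digit occurs in $D$ it is the $\ell$th digit; there is no ``or else''.
\item $18-2y_1$ is not a digit of $D$; the last digit is $19-2y_1$.
\item Your list $y_1\in\{6,8\}$, $y_\ell\in\{5,6,8\}$ came from Lemma~\ref{details}(2) and $5\le y_\ell\le 8$, not from the two constraints you name.
\item Each pair $(y_1,y_\ell)$ is an infinite family, so ``compute $k(D)$'' has to be done with symbolic multiplicities.
\end{itemize}

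The gap closes quickly with what you already have, without Theorem~\ref{smallest} or parity. Lemma~\ref{notallsame} gives $y_\ell<y_1$, which leaves $(y_1,y_\ell)\in\{(6,5),(8,5),(8,6)\}$. Your list of the digits of $D$ then lets you write $\overline{D}$ down and test the two identities $\deltaD{1}=y_1$ and $\deltaD{\ell}=10-y_1$.
\begin{itemize}
\item If $y_\ell=5$, then $D$ contains $2y_\ell-10=0$ and $18-2y_\ell=8$, so $\deltaD{1}=8$. This rules out $(6,5)$.
\item For $(8,6)$, $\overline{C}=8_\alpha7_\beta6_\gamma3_\gamma2_\beta1_\alpha$ gives $\overline{D}=7_\alpha6_\gamma5_\beta4_\beta3_\gamma2_\alpha$, so $\deltaD{1}=5\neq 8$.
\item For $(8,5)$, $\overline{C}=8_\alpha7_\beta6_\gamma5_\delta4_\delta3_\gamma2_\beta1_\alpha$ gives $\overline{D}=8_\delta7_\alpha6_\gamma5_\beta4_\beta3_{\gamma+1}2_{\alpha-1}1_{\delta-1}0$. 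Its innermost corresponding pair has difference $1$, $3$ or $4$, never the required $10-8=2$.
\end{itemize}
With this inserted, your argument is a complete proof.

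For comparison, the paper splits on which of $C$, $D$ contains the smallest digit of the cycle. If $C$ does, Theorem~\ref{smallest} gives $y_n=\deltaD{\ell}-1\le z_n$, so $\deltaD{1}+\deltaD{\ell}\le 9$ and the exceptional case of Lemma~\ref{inpairs} never arises. If only $D$ does, the paper gets $\deltaD{1}=8-2z_n$, hence $z_n\in\{0,1\}$. It then kills each value by finding an innermost $\{5,4\}$ pair that forces a forbidden $0$.

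Your version replaces Theorem~\ref{smallest} with Lemma~\ref{details}(2) and Lemma~\ref{notallsame}, and gains complete explicitness. Once every corresponding pair of $\overline{C}$ sums to $9$, $D$ is determined by $y_1,\dots,y_\ell$, and the contradiction is a direct read-off.
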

\begin{proof}
We consider two cases corresponding to whether $C$ contains the smallest digit in the 2-cycle. First, suppose that $C$ contains the smallest digit. 
By Theorem~\ref{smallest}, $y_n=\deltaD{\ell}-1.$ Since $y_n\leq z_n$, we observe:
$$\deltaD{1}+\deltaD{\ell} = z_1-z_n+y_n+1 \leq 8+(y_n-z_n)+1 = 9 +(y_n-z_n)\leq 9.$$
Since $\deltaD{1}+\deltaD{\ell} \neq 10$, Lemma~\ref{inpairs} implies that the symmetric pairs of $C$ and rainbow pairs of $\overline{C}$ coincide. Since $C$ does not contain the digit 9, the rainbow pairs and corresponding pairs of $\overline{C}$ coincide, and the result follows.

Now suppose that $C$ does {\em not} contain the smallest digit in the Kaprekar 2-cycle. Thus $D$ contains the smallest digit, and it occurs in $D$ exactly once as the $\ell^{th}$ digit of $D$: $z_n = \deltaC{\ell}-1$ (Theorem~\ref{smallest}). As we have argued above, because $D$ contains the smallest digit, the corresponding pairs of $\overline{D}$ and symmetric pairs of $D$ coincide.  The symmetric pair $\{\deltaC{\ell}-1, 9-\deltaC{\ell}\}$ of $D$ contains the smallest digit in the 2-cycle and the smallest digit appears only once, so this symmetric pair is the outermost corresponding pair of $\overline{D}$. That is, $z_1=9-\deltaC{\ell}$ and $z_n = \deltaC{\ell}-1.$ We conclude $z_1 = 8-z_n$ and $\deltaD{1} = z_1-z_n= 8-2z_n$.

Suppose for contradiction that the rainbow pairs of $\overline{C}$ and symmetric pairs of $C$ do not coincide. 
By Lemma~\ref{inpairs}, the symmetric pairs $\{\deltaD{1}, 10-\deltaD{1}\}$ and $\{\deltaD{\ell}-1, 9-\deltaD{\ell}\}$ 
of $C$ switch to form rainbow (and, hence, corresponding) pairs in $\overline{C}$ as follows: $\{\deltaD{1}, \deltaD{\ell}-1\}$ and $\{10-\deltaD{1}, 9-\deltaD{\ell}\}$, for the only other possible pairing implies $\deltaD{1}=\deltaD{\ell}$, contradicting Lemma~\ref{notallsame}. Also by Lemma~\ref{inpairs}, $\deltaD{1}+\deltaD{\ell}=10$, and so since $\deltaD{1}= 8-2z_n$, we have $\deltaD{\ell}=2+2z_n.$ We write corresponding pairs in $\overline{C}$ in terms of $z_n$: $\{8-2z_n, 1+2z_n\}$ and $\{2+2z_n, 7-2z_n\}$.

Since  $\deltaD{1}>\deltaD{\ell}$, observe that $z_n = 0$ or 1.  Suppose $z_n=1$. Then $\overline{C}$ has a corresponding pair equal to $\{4,5\}$. It follows that the innermost corresponding pair (that is, the $\ell^{th}$ pair) of $\overline{C}$ must be $y_\ell = 5$ and $y_{\ell+1}= 4$. Then, $D$ contains digit $\deltaC{\ell}-1=y_\ell - y_{\ell+1}-1=0$, contradicting the fact that the smallest digit in the 2-cycle is $z_n=1$. Therefore, $z_n\neq 1$.

Suppose $z_n=0$. Then $\overline{C}$ has a corresponding pairs $\{1,8\}$ and $\{2,7\}$, and hence $\deltaC{1}=7$ and $\deltaC{j}=7-2=5$ for some $1<j\leq\ell$. Since $D$ contains the digit $z_1=8-z_n=8$, one of the expressions involving $\deltaC{i}$ in Equation~\ref{D} must equal 8. Thus, there must exist an $i$ such that $\deltaC{i}=1$ (for then, $9-\deltaC{i}=8$). The existence of a corresponding pair in $C$ with difference 1 guarantees that the pair $\{2, 7\}$ is {\em not} the innermost corresponding pair of $\overline{C}$, and $D=\overline{C}-\underline{C}$ contains a symmetric pair $\{\deltaC{j}, 9-\deltaC{j}\}=\{5, 4\}$. Recall that symmetric pairs of $D$ are also corresponding pairs of $\overline{D}$. It follows that the innermost corresponding pair of $\overline{D}$ must be $z_\ell = 5$ and $z_{\ell+1}= 4$. Then, $C$ contains digit $\deltaD{\ell}-1=z_\ell - z_{\ell+1}-1=0$, contradicting the fact that $C$ does not contain the digit 0. So, $z_n\neq 0$. 

Therefore, the rainbow pairs of $\overline{C}$ and symmetric pairs of $C$ coincide. Again, since $C$ does not contain the digit 9, the rainbow pairs and corresponding pairs of $\overline{C}$ coincide, and the result follows.
\end{proof}

\begin{lemma}\label{equations}
    Let $\{C, D\}$ be a Kaprekar 2-cycle where neither integer contains the digit 9. The functions $\PD{i}$, $\PC{i}$,
    and $\pC{}(i)$ are connected to each other as follows.
    
{\small\begin{tabular}{lllll}
        ($a$) & $\PD{1} = \PC{4} + \PC{5} - \pC{}(4)$            &  & ($e$) & $\PD{2} = \pC{}(4) + \pC{}(6)$ \\
        ($b$) & $\PD{3} = \PC{3} + \PC{6} - \pC{}(3) - \pC{}(6)$ &  & ($f$) & $\PD{4} = \pC{}(3) + \pC{}(7)$ \\
        ($c$) & $\PD{5} = \PC{2} + \PC{7} - \pC{}(2) - \pC{}(7)$ &  & ($g$) & $\PD{6} = \pC{}(2) + \pC{}(8)$ \\
        ($d$) & $\PD{7} = \PC{1} + \PC{8} - \pC{}(1) - \pC{}(8)$ &  & ($h$) & $\PD{8} = \pC{}(1)$
    \end{tabular}}
\end{lemma}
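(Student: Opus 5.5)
The plan is to read off the corresponding pairs of $\overline{D}$ directly from the digit pattern of $D$ in Equation~\ref{D}, and then count them by their differences. Since neither $C$ nor $D$ contains the digit 9, the central block of 9's in Equation~\ref{D} must be empty, so $n=2\ell$. The digits of $D$ therefore fall into $\ell$ symmetric pairs:
\[
\{\deltaC{1},\,10-\deltaC{1}\},\qquad \{\deltaC{i},\,9-\deltaC{i}\}\ \ (1<i<\ell),\qquad \{\deltaC{\ell}-1,\,9-\deltaC{\ell}\}.
\]
By Lemma~\ref{not10}, applied to $D$, these symmetric pairs of $D$ are exactly the corresponding pairs of $\overline{D}$. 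Hence the multiset $\{\deltaD{j}\}$ is the multiset of absolute differences within these pairs. These differences are:
\begin{itemize}
\item $|2\deltaC{1}-10|$ for the first pair;
\item $|2\deltaC{i}-9|$ for each middle index $1<i<\ell$;
\item $|2\deltaC{\ell}-10|$ for the last pair.
\end{itemize}

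Next I would count, for each value $j$, how many of these differences equal $j$. Odd differences arise only from the middle indices. The equation $|2\deltaC{i}-9|=j$ gives $\deltaC{i}\in\{(9-j)/2,(9+j)/2\}$. The number of middle indices with $\deltaC{i}=m$ is $\PC{m}$ minus the contribution of the two end indices, namely $\PC{m}-\pC{}(m)$. Summing over the two admissible values of $m$ gives:
\begin{itemize}
\item for $j=1$: $m\in\{4,5\}$, giving (a);
\item for $j=3$: $m\in\{3,6\}$, giving (b);
\item for $j=5$: $m\in\{2,7\}$, giving (c);
\item for $j=7$: $m\in\{1,8\}$, giving (d).
\end{itemize}
In (a) only $-\pC{}(4)$ appears, because Lemma~\ref{details}(2) gives $\pC{}(5)=0$. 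Even differences arise only from the two end pairs. The equation $|2\deltaC{k}-10|=j$ with $k\in\{1,\ell\}$ gives $\deltaC{k}=5\pm j/2$, so $\PD{j}=\pC{}(5-j/2)+\pC{}(5+j/2)$. This yields (e), (f) and (g). For (h), the term $\pC{}(9)$ vanishes, because $\deltaC{k}=9$ would force the digit 9 in $C$.

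Finally, I would check that no difference value has been missed. The value 0 would require an end index with $\deltaC{k}=5$, which is excluded by Lemma~\ref{details}(2). The value 9 would require $\deltaC{i}\in\{0,9\}$, which is impossible because $\deltaC{i}\ge 1$ and $C$ has no 9. These exclusions are where Lemma~\ref{details} is genuinely needed.

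The main obstacle, which is essentially the only non-bookkeeping point, is justifying that the symmetric pairs of $D$ really are the corresponding pairs of $\overline{D}$. Lemma~\ref{not10} supplies exactly this. The remaining care is to handle the end indices correctly: their pairs are offset from the "sum to 9" pattern, which is why the correction terms $\pC{}$ are subtracted in the odd cases and are the sole contributors in the even cases.
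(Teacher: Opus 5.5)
Your proposal is correct and follows the paper's proof. Both arguments use Lemma~\ref{not10} to replace the corresponding pairs of $\overline{D}$ by the symmetric pairs of $D$. The end pairs then give the even differences $|10-2\deltaC{1}|$ and $|10-2\deltaC{\ell}|$, and the middle pairs give the odd differences $|9-2\deltaC{i}|$. Counting these yields the identities, with $\pC{}(5)$ dropped in (a) via Lemma~\ref{details}(2) and $\pC{}(9)$ dropped in (h) because $C$ has no 9.

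Your extra check that the difference values $0$ and $9$ never occur is harmless but not needed for the eight stated identities.
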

\begin{remark}\label{remark}
    Parallel equations to (a) through (h) hold, with the roles of $\PD{i}$,  $\PC{i}$ and $\pC{}(i)$ interchanged with $\PC{i}$, $\PD{i}$, and $\pC{}(i)$, respectively, with a parallel argument.
\end{remark}
\begin{proof}
Lemma~\ref{not10} implies that we may compute $\PD{i}$ considering the symmetric pairs of $D$ rather than the corresponding pairs of $\overline{D}$. The differences of the symmetric pairs in $D$ are the following values: 
$$|10-2\deltaC{1}|, |9-2\deltaC{2}|, \ldots, |9-2\deltaC{\ell-1}|, |10-2\deltaC{\ell}|.$$  
Observe that the first and last differences in the above list are even integers while the remaining are odd. Therefore, to calculate, say, $\PD{5}$, we need to find all $\deltaC{i}$ where $1<i<\ell$ such that $|9-2\deltaC{i}| = 5$. Notice that $|9-2\deltaC{i}| = 5$ if and only if $\deltaC{i} = 2$ or 7. Therefore $\PD{5} = \PC{2}-\pC{}(2)+\PC{7}-\pC{}(7)$, as desired. Equations (b) and (d) similarly follow. For Equation (a), we similarly conclude $\PD{1} = \PC{4}-\pC{}(4)+\PC{5}-\pC{}(5)$, but  Lemma~\ref{details} implies $\pC{}(5)=0$.

Now we consider the value of $\PD{i}$ on even integers. Say, $\PD{2}$. We must check whether $|10-2\deltaC{1}| = 2$ or $|10-2\deltaC{\ell}| = 2$. The first equation (respectively, second equation) holds exactly when $\deltaC{1}$ (respectively, $\deltaC{\ell}$) equals 4 or 6. Therefore, $\PD{2} = \pC{}(4) + \pC{}(6).$ Equations (f), (g), and (h) follow in a similar way. In (h), it should be noted that since $C$ does not contain the digit 9, $\pC{}(9)=0$.
\end{proof}

\begin{lemma}\label{Peven}
    Let $\{C, D\}$ be a Kaprekar 2-cycle where neither integer contains the digit 9. The function $\PD{i}$ satisfies the following: $\PD{2i} \in \{0,1\}$, and $\PD{2i} = 1$ for exactly two values of $i$. 
\end{lemma}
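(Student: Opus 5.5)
Lemma~\ref{equations}, parts (e) through (h), express every even-index value of $\PD{\cdot}$ through $\pC{}$ alone:
\[
\PD{2} = \pC{}(4)+\pC{}(6),\quad \PD{4} = \pC{}(3)+\pC{}(7),\quad \PD{6} = \pC{}(2)+\pC{}(8),\quad \PD{8} = \pC{}(1).
\]
These four expressions use $\pC{}(i)$ for $i\in\{1,2,3,4,6,7,8\}$, and each such $i$ appears in exactly one of them. By Lemma~\ref{details}(1), $\pC{}(i)=1$ for exactly two values of $i$ and is $0$ elsewhere. Those two values are $\deltaC{1}$ and $\deltaC{\ell}$, which are distinct by Lemma~\ref{notallsame}.

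First I pin down where these two values can lie. Neither can be $5$, by Lemma~\ref{details}(2). Neither can be $9$, since $C$ has no digit 9. Neither can be $0$, since $\deltaC{j}\geq 1$ for $j\leq \ell$ by the definition of $\ell$. So both values lie in $\{1,2,3,4,6,7,8\}$. Summing the four displayed equations then gives $\PD{2}+\PD{4}+\PD{6}+\PD{8}=2$.

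Next I show each $\PD{2i}$ is at most $1$. A term $\PD{2i}$ could equal $2$ only if both nonzero values of $\pC{}$ fell in the same group. The groups are $\{4,6\}$, $\{3,7\}$, $\{2,8\}$ and $\{1\}$, so this means $\{\deltaC{1},\deltaC{\ell}\}=\{k,10-k\}$ for some $k\in\{2,3,4\}$, that is, $\deltaC{1}+\deltaC{\ell}=10$. Lemma~\ref{inpairs}(2), applied with the roles of $C$ and $D$ swapped as the Remark after it permits, says that $\deltaC{1}+\deltaC{\ell}=10$ holds exactly when the symmetric pairs of $D$ differ from the rainbow pairs of $\overline{D}$. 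But $D$ contains no 9, so its rainbow pairs coincide with its corresponding pairs. Lemma~\ref{not10} says the corresponding pairs of $\overline{D}$ coincide with the symmetric pairs of $D$. This contradicts $\deltaC{1}+\deltaC{\ell}=10$. Hence each $\PD{2i}\in\{0,1\}$, and since the four values sum to $2$, exactly two of them equal $1$.

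The argument is straightforward bookkeeping, and I expect the main obstacle to be the case $\deltaC{1}+\deltaC{\ell}=10$. The key is to invoke Lemma~\ref{not10} together with the equivalence (a)$\Leftrightarrow$(c) of Lemma~\ref{inpairs}, stated for $D$ rather than for $C$, and to be careful about which number each lemma is applied to.
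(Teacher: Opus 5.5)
Your proof is correct and follows the paper's argument essentially step for step. You sum (e)--(h) to get $\PD{2}+\PD{4}+\PD{6}+\PD{8}=2$, then rule out a value of $2$ because it would force $\deltaC{1}+\deltaC{\ell}=10$, which contradicts Lemma~\ref{inpairs}(2) (applied to $D$) together with Lemma~\ref{not10}. Your explicit check that $\deltaC{1},\deltaC{\ell}\notin\{0,5,9\}$ fills in a step the paper leaves implicit when it cites Lemma~\ref{details}.
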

\begin{proof}
    By Lemma~\ref{details} and Equations (e), (f), (g), and (h) of Lemma~\ref{equations}, we observe
      \[
      \PD{2} +\PD{4} + \PD{6} + \PD{8} =\pC{}(1) + \pC{}(2)+\pC{}(3)+\pC{}(4)+\pC{}(6)+\pC{}(7)+\pC{}(8)=2
      \]
  Thus $\PD{2i}\in\{0,1,2\}$ for all $i$. Suppose that $\PD{2i} = 2$ for some $i$. This would imply that $\pC{}(4)=\pC{}(6)= 1$ or $\pC{}(3) =\pC{}(7)= 1$ or $\pC{}(2) =\pC{}(8)= 1$. This, in turn, would imply that $\deltaC{1}+\deltaC{\ell} = 10$. By Lemma~\ref{inpairs}(2), the symmetric pairs of $D$ and rainbow pairs of $\overline{D}$ do not coincide. This contradicts Lemma~\ref{not10}. Therefore, $\PD{2i}=1$ for exactly two values of $i$ and otherwise $\PD{2i}=0$.
\end{proof}

\begin{theorem}\label{no2cycles}
    In a Kaprekar 2-cycle $\{C, D\}$, both $C$ and $D$ contain the digit 9. 
\end{theorem}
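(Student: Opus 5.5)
By Theorem~\ref{9inboth}, either both integers of a 2-cycle contain the digit 9 or neither does. So the plan is to assume that neither $C$ nor $D$ contains a 9 and derive a contradiction from the counting relations of Lemmas~\ref{details}, \ref{equations} and~\ref{Peven}, together with their parallels from Remark~\ref{remark}.

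\textbf{Setup.} Under this assumption $\ell_c=\ell_d=\ell$ (Corollary~\ref{ell}; the exceptional cycle contains 9), and $n=2\ell$.
\begin{itemize}
\item Every corresponding pair difference lies in $\{1,\dots,8\}$: a difference of $9$ would need the digits $9$ and $0$, and a difference of $0$ is excluded by the definition of $\ell$.
\item Hence $\sum_{i=1}^{8}\PC{i}=\sum_{i=1}^{8}\PD{i}=\ell$.
\item By Lemma~\ref{details}, $\pC{}$ is supported on exactly two values, namely $\deltaC{1}$ (the largest difference) and $\deltaC{\ell}$ (the smallest). Neither of these equals $5$ (Lemma~\ref{details}(2)) or $9$.
\item Since $\deltaC{1}\ge\deltaC{j}\ge\deltaC{\ell}$ for all $j$, we get $\PC{i}=0$ whenever $i>\deltaC{1}$ or $i<\deltaC{\ell}$, and $\PC{i}\ge \pC{}(i)$. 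The same holds for $D$.
\end{itemize}

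\textbf{Converting to a finite check.} By Lemma~\ref{not10} and the argument in Lemma~\ref{Peven}, $\deltaC{1}+\deltaC{\ell}\neq 10$, and the same holds for $D$. The equations (e)--(h) then say that $\{\deltaC{1},\deltaC{\ell}\}$ determines exactly which two even indices $2i$ have $\PD{2i}=1$. For example, $\deltaC{1}=8$ and $\deltaC{\ell}=2$ would give $\PD{6}=2$, which is forbidden; so those two values cannot occur together.

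The parallel equations determine the even values of $\PC{}$ from $\{\deltaD{1},\deltaD{\ell}\}$. I would then impose the following consistency conditions:
\begin{enumerate}
\item If $\deltaC{1}$ or $\deltaC{\ell}$ is even, then $\PC{}$ must be positive there, so that even index must be one of the two where $\PC{}$ is $1$.
\item If $\deltaC{1}$ is even, every even index above it has $\PC{}=0$, and every even index below $\deltaC{\ell}$ has $\PC{}=0$.
\item The largest index $i$ with $\PD{i}\neq 0$ must equal $\deltaD{1}$, and the smallest must equal $\deltaD{\ell}$.
\end{enumerate}
For condition (3), the odd values $\PD{1},\PD{3},\PD{5},\PD{7}$ come from (a)--(d), each a sum of two $\PC{}$ values minus $p$-terms. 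This fixes which odd $\PD{}$ values are nonzero, and so pins down $\deltaD{1}$ and $\deltaD{\ell}$ in terms of the support of $\PC{}$. The symmetric statements hold with $C$ and $D$ swapped.

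\textbf{Case analysis.} I would organize the cases by the unordered pair $\{\deltaC{1},\deltaC{\ell}\}\subset\{1,2,3,4,6,7,8\}$ with sum $\neq 10$, which is a finite list. In each case:
\begin{enumerate}
\item Compute the two nonzero even $\PD{}$ values from (e)--(h).
\item From condition (3) applied to these, deduce the possibilities for $\deltaD{1}$ and $\deltaD{\ell}$.
\item Feed those back through the parallel (e)--(h) to get the even $\PC{}$ values, and check compatibility with conditions (1) and (2).
\end{enumerate}
Surviving cases should then be killed by the odd equations. An odd $\PD{}$ value is forced positive whenever either of the two $\PC{}$ entries feeding it is positive beyond its $p$-correction. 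This should produce a nonzero $\PD{i}$ with $i$ outside $[\deltaD{\ell},\deltaD{1}]$, or produce $\PD{}$ nonzero at an index whose parity disagrees with what Lemma~\ref{Peven} allows.

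\textbf{Main obstacle.} The hardest part is making this finite search clean rather than brute force. The system is a nonnegative integer linear system with the size $\ell$ free, so I would first try to show that the boundary constraints alone, namely support in $[\deltaC{\ell},\deltaC{1}]$ and in $[\deltaD{\ell},\deltaD{1}]$ with the endpoints hit, already force a contradiction. I would iterate $C\to D\to C$ a couple of times to shrink the allowable interval of differences until it collapses. If some case survives that reduction, I would fall back on comparing the parity sums directly: the odd $\PD{}$ total is $\ell-2$ and the even total is $2$, and the same holds for $\PC{}$.
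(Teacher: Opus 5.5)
\textbf{There is a genuine gap at the end of the argument.} Your reduction itself is sound, and it is organized differently from the paper's. The paper uses Theorem~\ref{smallest}: it assumes $D$ holds the smallest digit, which gives $\Delta^D_1=10-2\Delta^C_\ell$ and $\Delta^C_\ell\le\Delta^D_\ell$, and then runs a chain of deductions. You instead use the support intervals $[\Delta_\ell,\Delta_1]$ on both sides. As far as I can check, carrying out your finite search does eliminate every pair of profiles except one:
\[
\Delta^C_1=\Delta^D_1=6,\quad \Delta^C_\ell=\Delta^D_\ell=2,\quad P(2)=P(6)=P'(2)=P'(6)=1,\quad P(3)=P'(3)=x,
\]
with all other values equal to $0$. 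This profile cannot be killed by any tool you list. It satisfies (a)--(h) and their primed versions, Lemma~\ref{Peven}, your conditions (1)--(3), and the parity totals (odd total $x=\ell-2$, even total $2$). It is realized by $C=D$ with $\overline{C}=76_{x+1}43_x1$ (e.g.\ $6174$), which are Kaprekar constants containing no digit $9$. None of your constraints encodes $C\neq D$, so no argument built only from them can end in a contradiction. Your expectation that surviving cases fall to the odd equations or to the parity comparison fails exactly here.

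\textbf{The missing step is to leave the counting system and reconstruct actual digits.} By Lemma~\ref{not10}, the corresponding pairs of $\overline{C}$ and $\overline{D}$ are the symmetric pairs. So every pair sums to $9$, except exactly one pair summing to $10$ and one summing to $8$. This pins down the pairs:
\begin{itemize}
\item each difference-$3$ pair must be $\{6,3\}$;
\item the difference-$6$ pair is $\{8,2\}$ or $\{7,1\}$;
\item the difference-$2$ pair is $\{6,4\}$ or $\{5,3\}$.
\end{itemize}
Hence each of $\overline{C},\overline{D}$ is $86_x53_{x+1}2$ or $76_{x+1}43_x1$. Now compute directly: $k$ sends $86_x53_{x+1}2$ to $63_x176_x4$, a rearrangement of $76_{x+1}43_x1$, and that number is a fixed point. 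So neither candidate lies in a genuine $2$-cycle. This is how the paper finishes, and your proposal needs this use of $C\neq D$ (via explicit digits) to close the argument.
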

\begin{proof}
    By Theorem~\ref{9inboth}, it suffices to show that a Kaprekar 2-cycle $\{C,D\}$ such that neither $C$ nor $D$ contains the digit 9 does not exist. Suppose such a Kaprekar 2-cycle exists. By Lemma~\ref{not10}, the symmetric pairs of $C$ and the corresponding pairs of $\overline{C}$ coincide. Similarly for $D$.  Equations (a) through (h) from Lemma~\ref{equations} apply, as well as parallel formulas as follows (see Remark~\ref{remark}):
 
 {\small\begin{tabular}{lllll}
    ($a'$) & $\PC{1} = \PD{4} + \PD{5} - \pD{}(4)$ & & ($e'$) & $\PC{2} = \pD{}(4) + \pD{}(6)$ \\
    ($b'$) & $\PC{3} = \PD{3} + \PD{6} - \pD{}(3) - \pD{}(6)$ && ($f'$) & $\PC{4} = \pD{}(3) + \pD{}(7)$ \\
    ($c'$) & $\PC{5} = \PD{2} + \PD{7} - \pD{}(2) - \pD{}(7)$ && ($g'$) & $\PC{6} = \pD{}(2) + \pD{}(8)$ \\
    ($d'$) & $\PC{7} = \PD{1} + \PD{8} - \pD{}(1) - \pD{}(8)$ && ($h'$) & $\PC{8} = \pD{}(1)$
\end{tabular}}

    Without loss of generality, suppose that $D$ has the smallest digit in the 2-cycle (this digit may or may not be a digit in $C$, as well). Then by Theorem~\ref{smallest}, the smallest digit (which is $z_n$) must be in the $\ell^{th}$ position of $D$ and so $z_n = \deltaC{\ell}-1$. Since $\{z_1, z_n\}$ form a corresponding pair in $\overline{D}$, the two digits must also form a symmetric pair in $D$. Therefore, $z_1 = 9-\deltaC{\ell}$ and $\deltaD{1} = z_1-z_n = 10-2\deltaC{\ell}$. Because $\deltaC{\ell}-1$ is the smallest digit in the 2-cycle and $\deltaD{\ell}-1$ is a digit in $C$, we conclude $\deltaC{\ell}\leq \deltaD{\ell}$. 

    First, we claim that $\deltaC{\ell}\geq 2$.  Suppose for contradiction that $\deltaC{\ell}=1$. We split into two cases: (1) $\deltaD{\ell}=\deltaC{\ell}= 1$ and (2) $\deltaD{\ell}>\deltaC{\ell}=1$. Notice that in either case, $\deltaD{1}=10-2\deltaC{\ell}=8$, which implies $\pC{}(1)=\pD{}(8)=\PD{8}=1$.

    {\bf Case 1:} $\deltaD{\ell}=\deltaC{\ell}= 1$. Then $\deltaC{1}= 8,$ for similar reasons as above. So then $\pD{}(1)=\pC{}(8)=1$.  Because $\pC{}(i)$ and $\pD{}(i)$ are  nonzero for exactly two values of $i$ (Lemma~\ref{details}), we conclude that the functions are zero for $i=2, 3, 4, 5, 6, 7.$
Combining Equations (b), (g), (b$'$), and (g$'$) from Lemma~\ref{equations}, we have
\begin{equation}\label{pqEquation1}
    \pD{}(3) + \pD{}(6) + \pC{}(3) + \pC{}(6) = \pC{}(2) + \pC{}(8)+ \pD{}(2)+\pD{}(8)
\end{equation}
Plugging the known values of $\pC{}(i)$ and $\pD{}(i)$ into Equation~\ref{pqEquation1}, the equation becomes $0 = 2$, a contradiction. 

     {\bf Case 2:} $ \deltaD{\ell}>\deltaC{\ell}=1$.  Then $\PD{1}=\pD{}(1)=0$.
Equations (g$'$) and (d$'$) yield $\PC{6} =\pD{}(2) + \pD{}(8)>0$ and 
$\PC{7} = \PD{1} + \PD{8} - \pD{}(1) - \pD{}(8) = 0.$ Therefore $\deltaC{1}=6$ and $\pC{}(6)=1$. Since $\pC{}(i)$ is nonzero for only two values of $i$ and $\pC{}(1)=\pC{}(6)=1$, we know that all other values of $\pC{}(i)$ are 0. Because $\PD{2}= \pC{}(4)+\pC{}(6)=1$ and $\PD{1}=0,$ we conclude $\deltaD{\ell}=2$. It follows that $\deltaD{1} + \deltaD{\ell}=10$, a contradiction (Lemma~\ref{inpairs} and Lemma~\ref{not10}). 

Therefore, we conclude  $\deltaC{\ell}\geq 2$, as claimed. Since $\deltaD{\ell}\geq\deltaC{\ell}$, we also know $\deltaD{\ell}\geq2$, as well. Thus $\PC{1} =\PD{1} = 0$, which, in turn, implies $\pC{}(1)$, $\pD{}(1)$, $\PC{8}$, $\pC{}(8)$, $\PD{8}$, and $\pD{}(8)$ are all zero.

     Plugging these values into Equation (a) in Lemma~\ref{equations}, we find $\PC{5} = \pC{}(4)-\PC{4}\leq 0$, which implies that $\PC{5} = 0$. From Equation (d), we have $\PD{7} = 0$, and hence $\pD{}(7) = 0$. Similarly, from Equations (a$'$) and (d$'$) we find $\PD{5}$, $\PC{7}$, and $\pC{}(7)$ are all zero.

Because  $\PD{8} =\PD{7}= 0$ and $\PD{6}\in\{0,1\}$, it follows that $\PD{6} = \pD{}(6).$ Similarly, we conclude 
that $\PC{6} = \pC{}(6).$ Plugging these expressions into Equations (b) and (b$'$), we find
\[
 \PD{3} =\PC{3} - \pC{}(3) 
 \;\;\text{ and }\;\;    
 \PC{3} =\PD{3} - \pD{}(3).
\]
Combining the above two equations together, we conclude that $\pD{}(3) = -\pC{}(3)$, which forces $\pD{}(3)=\pC{}(3)=0$.

The fact that $\pD{}(3)$ and $\pC{}(3)$ both equal zero implies (via Equations (f) and (f$'$)) that $\PD{4}$ and $\PC{4}$ are both 0 and hence so are $\pD{}(4)$ and $\pC{}(4)$. 
Lemma~\ref{Peven} guarantees that $\pD{}(i)$ and $\pC{}(i)$ are nonzero for exactly two values of $i$. By process of elimination, we conclude that $\pD{}(2)$, $\pD{}(6)$, $\pC{}(2)$, and $\pC{}(6)$ all equal 1.

 Moreover, from Equations (e), (g), (e$'$), and (g$'$) we conclude that $\PD{2}$, $\PD{6}$, $\PC{2}$, and $\PC{6}$ all equal 1, so there is exactly one corresponding pair with difference 6 in $\overline{C}$ and $\overline{D}$, and exactly one corresponding pair in both $\overline{C}$ and $\overline{D}$ with difference 2. Equation (b) simplifies to $\PD{3} = \PC{3}$, so $\overline{C}$ and $\overline{D}$ have the same number (call it $x$) of corresponding pairs with difference 3. 

 Recall that the corresponding pairs of $\overline{C}$ and symmetric pairs of $C$ coincide, and the same for the pairs of $\overline{D}$ and $D$ (Lemma~\ref{not10}). Thus, the corresponding pairs of digits in $C$ and $D$ also have the property that they sum to 8, 9, or 10 (with exactly one pair summing to 8 and exactly one pair summing to 10). The only pair of digits that has difference 3 with a sum of 8, 9, or 10 is $\{6,3\}$. The pairs of digits with difference 6 that meet the criteria are $\{2,8\}$ and $\{1,7\}$. Finally, the pairs with difference 2 that meet the criteria are $\{6,4\}$ and $\{5,3\}$. Putting this together, there are two possibilities for $C$ and $D$, as follows where we take $x\geq 0$:
 \[
 86_x53_{x+1}2 
 \;\;\; \text{ or }\;\;\; 
 76_{x+1}43_x1
 \]
Applying the Kaprekar process to the number on the left yields the number on the right (after ordering the digits from largest to smallest). The number on the right is a Kaprekar constant (see Family (1) in Theorem~\ref{dolan}). Therefore, neither of these numbers leads to a Kaprekar 2-cycle. 
    \end{proof}

\section{Kaprekar 2-cycles with the same smallest digit in each number}\label{9+samesmallest}

In this section, we classify Kaprekar 2-cycles which have the same smallest digit in each number, proving Theorem~\ref{familyA}. Our overall strategy is to define two matrices associated with such a 2-cycle. We find constraints on the entries of these matrices. These, in turn, constrain the Kaprekar 2-cycle.

\begin{lemma}\label{pairs}
  Suppose that $\{C,D\}$ is a Kaprekar 2-cycle such that both $C$ and $D$ contain the same smallest digit. Then $\deltaD{1} + \deltaD{\ell} = 10$, $\deltaC{1}=\deltaD{1}$, and $\deltaC{\ell}=\deltaD{\ell}$. Moreover, if we let $z=n-2\ell$, then $\overline{C}$ and $\overline{D}$ have the form:
\begin{equation}\label{CD}
    \overline{C} = 9_{a+z}8_b7_c6_d5_e4_e3_d2_c1_b0_a ~~~\text{ and } ~~~
\overline{D} = 9_{a+z}8_f7_g6_h5_i4_i3_h2_g1_f0_a,
\end{equation}
for some nonnegative integers $a, b, c, d, e, f, g, h,$ and $i$.
\end{lemma}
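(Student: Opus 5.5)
Let $s$ be the common smallest digit. I plan to pin down $s$ first, then use the pairing lemmas to get $\deltaD{1}+\deltaD{\ell}=10$, and finally read off the shape of $\overline{C}$ and $\overline{D}$ from the rainbow pairs.

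\textbf{Step 1: the outer corresponding pairs.} By Theorem~\ref{no2cycles}, both numbers contain 9. Hence $y_1=z_1=9$ and $y_n=z_n=s$, so
$$\deltaC{1}=\deltaD{1}=9-s.$$
This already gives $\deltaC{1}=\deltaD{1}$.

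\textbf{Step 2: locating $s$ and $\deltaD{\ell}$.} Since $s$ appears in $D$, Theorem~\ref{smallest} says the $\ell$-th digit of $D$ is $s$, so $\deltaC{\ell}-1=s$. Symmetrically, $\deltaD{\ell}-1=s$. Thus
$$\deltaC{\ell}=\deltaD{\ell}=s+1.$$

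\textbf{Step 3: showing $s=0$.} The last digit of $C$ is $10-\deltaD{1}=1+s$. I want to show the symmetric pairs of $C$ do not coincide with the rainbow pairs, which forces $\deltaD{1}+\deltaD{\ell}=10$; this sum equals $10$ exactly when $s=0$. The symmetric pairs of $C$ summing to $10$ and to $8$ are $\{9-s,\,1+s\}$ and $\{s,\,8-s\}$. The smallest digit $s$ of $C$ lies in the $8$-pair. Suppose the pairs are nested, so by Lemma~\ref{inpairs}(1) the rainbow pairs equal the symmetric pairs. In $\overline{C}$ the outermost rainbow pair is $\{y_{n-2\ell+1},\,y_n\}=\{y_{n-2\ell+1},\,s\}$. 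Since $s$ is the smallest digit, this pair must be $\{s,8-s\}$ (or, if $s$ is repeated, still a pair containing $s$). I then need to compare this with the corresponding pair $\{9,s\}$ and with the symmetric-pair sums in Equation~\ref{symmetricpairs}. I expect a contradiction unless $s=0$, for instance by comparing the multiplicities of $s$ and of $9$ against the counts in Theorem~\ref{smallest}. This is the delicate step. The cleanest route I see is to count 9's: $\overline{C}$ has exactly $n-2\ell$ leading 9's beyond the rainbow part, plus any 9's that pair with $0$. If $s>0$, no rainbow pair can contain a $9$, since its partner would have to be $\le 1$ and $\ge s$, which forces $s=1$ and the sum to be $10$. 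So the only 9's would be the $n-2\ell$ unpaired ones, and the corresponding pair $\{y_1,y_n\}=\{9,s\}$ would then require $y_{n-2\ell+1}$ to relate to $s$ in a way I check against the pair sums. I expect the sub-case $s=1$ with pair $\{9,1\}$ to need the most care. If the non-nested case occurs instead, Lemma~\ref{inpairs}(2) gives the sum $10$ directly.

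\textbf{Step 4: the form of $\overline{C}$ and $\overline{D}$.} Once $\deltaD{1}+\deltaD{\ell}=10$, Lemma~\ref{inpairs}(2) says every rainbow pair of $\overline{C}$ sums to $9$. The rainbow portion of $\overline{C}$ is therefore symmetric under $d\mapsto 9-d$: there are equally many $8$'s and $1$'s, $7$'s and $2$'s, and so on. Together with the $z$ leading 9's this gives $\overline{C}=9_{a+z}8_b7_c6_d5_e4_e3_d2_c1_b0_a$, and the same argument applied to $D$ gives the stated form of $\overline{D}$.
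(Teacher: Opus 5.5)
\textbf{The gap is in Step 3.} It aims at a false statement, and the aim rests on an arithmetic slip.

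From Steps 1 and 2 you already have $\deltaD{1}=9-s$ and $\deltaD{\ell}=s+1$. Hence $\deltaD{1}+\deltaD{\ell}=(9-s)+(s+1)=10$ for every $s$, not only for $s=0$. Moreover, $s=0$ is not true in general. Take Family (3) of Theorem~\ref{MainResult} with $z=2$, $u=1$, $v=0$: then $\overline{C}=9987766554433221$ and $\overline{D}=9987776554432221$. These form a Kaprekar 2-cycle, since $k$ sends one to $8765431997654322$ or $8764421997755322$, which sort back to the other. The common smallest digit is $1$.

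So the contradiction you expect ``unless $s=0$'' does not exist. The chain ``pairs fail to coincide $\Rightarrow$ sum is $10$ $\Leftrightarrow$ $s=0$'' cannot be completed. As written, your derivation of $\deltaD{1}+\deltaD{\ell}=10$ has a hole, even though the correct argument is already in your Steps 1--2. The computation $\deltaD{1}+\deltaD{\ell}=(z_1-z_n)+(z_n+1)=z_1+1=10$ is exactly the paper's proof. Together with Step 2 it also gives $\deltaC{\ell}=\deltaD{\ell}$. Delete Step 3 and go straight to Step 4.

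Step 4 is then essentially the paper's argument via Lemma~\ref{inpairs}(2), but you skipped two justifications.

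First, $z=n-2\ell$ must be the same for $C$ and $D$. This is Corollary~\ref{ell}; the exceptional cycle $\{53955,59994\}$ is excluded because its two numbers have different smallest digits.

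Second, ``the same argument applied to $D$'' only produces a form with its own number of $0$'s. The lemma asserts that the number $a$ of $0$'s, and hence the number $a+z$ of $9$'s, is common to $C$ and $D$. This follows from Theorem~\ref{smallest}. If $s=0$, then $0$ occurs in both numbers, so alternative (1) applies and the counts agree. If $s>0$, then $a=0$ for both.
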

\begin{proof}
Theorem~\ref{smallest} implies that $\deltaD{\ell}-1$ is the smallest digit of $C$. Since  $C$ and $D$ have the same the smallest digit,  $\deltaD{\ell}-1 = z_n$. Since $D$ contains the digit 9 (Theorem~\ref{no2cycles}),
$$  \deltaD{1} + \deltaD{\ell}-1  = (z_1 - z_n) + z_n = z_1 = 9,
$$  
Therefore $\deltaD{1} + \deltaD{\ell}= 10$. Using a parallel argument, $\deltaC{1} + \deltaC{\ell} = 10$, as well.

Because $C$ and $D$ have the same largest digit and same smallest digit, $\deltaC{1}=\deltaD{1}$. Moreover, since $\deltaC{1}+\deltaC{\ell}=\deltaD{1}+\deltaD{\ell} = 10$, we conclude $\deltaC{\ell}=\deltaD{\ell}$.

Because $\deltaD{1}+\deltaD{\ell} = 10$, Lemma~\ref{inpairs}(2) implies that $2\ell$ digits of $C$ belong in pairs, all of which sum to 9. The remaining $z=n-2\ell$ digits of $C$ are 9's. Therefore $\overline{C}$ can be expressed as in Equation~\ref{CD}. The digit 0 occurs the same number of times (say, $a$ times) in both $C$ and $D$, as it is the smallest possible digit (Theorem~\ref{smallest}).  Beyond that, the construction of $\overline{D}$ is similar and results in Equation~\ref{CD}. 
\end{proof}

The goal of this section is to show that the integers $a, b, c, d, e, f, g, h, i,$ and $z$ in Equation~\ref{CD} must have values of the form given in Theorem~\ref{familyA}. To that end, we define a pair of matrices $M_C$ and $M_D$ associated to the Kaprekar 2-cycle $\{C, D\}$.
\begin{definition}
     For a Kaprekar 2-cycle $\{C, D\}$, let $M_C$ be a $5\times 5$ matrix where the $(i,j)^{th}$ entry is the number of corresponding pairs of $\overline{C}$ of the form $\{i-1, 10-j\}$, where $1\leq i,j\leq 5$. The matrix $M_D$ is defined similarly. 
\end{definition}
The matrix entries of $M_C$ and $M_D$ satisfy several relationships. These relationships parallel those found by Dolan in Lemma 5 of~\cite{dolan} for Kaprekar constants. 

\begin{lemma}\label{matrix}
    Suppose that $\{C,D\}$ is a Kaprekar 2-cycle such that both $C$ and $D$ contain the same smallest digit. With the notation for $\overline{C}$ and $\overline{D}$ as in Equation~\ref{CD}, 
    the associated matrices $M_C$ and $M_D$ are as below. The matrix entries satisfy (1) through (4) below.
    \[ 
    M_C = 
    \begin{bmatrix} 
        a & 0 & 0 & 0 & 0\\ 
        f_1 & g_2 & 0 & 0 & 0\\ 
        g_1 & h_2 & i_3 & 0 & 0\\ 
        h_1 & i_2 & i_5 & h_4 & 0\\ 
        i_1 & i_4 & h_3 & g_3 & f_2
        \end{bmatrix}   
    \;\; 
    M_D = 
    \begin{bmatrix} 
        a & 0 & 0 & 0 & 0\\ 
        b_1 & c_2 & 0 & 0 & 0\\ 
        c_1 & d_2 & e_3 & 0 & 0\\ 
        d_1 & e_2 & e_5 & d_4 & 0\\ 
        e_1 & e_4 & d_3 & c_3 & b_2
    \end{bmatrix}
    \]
    \begin{enumerate}
        \item The row sums of $M_C$ are $a, b, c, d, e$, respectively. The row sums of $M_D$ are $a, f, g, h, i$, respectively. 
        \item If $e>0$, then the column sums of $M_C$ are $a+z, b, c, d,$ and $e-z$, respectively. Similarly, if $i>0$, then the column sums of $M_D$ are $a+z, f, g, h,$ and $i-z$, respectively. 
        \item $b = \sum b_j$. Similarly for $c, d, e, f, g, h, i$.
        \item For both matrices, if $m_{ij}>0,$ then for all $p$, $q$ such that $(p-i)(q-j)<0$, we have $m_{pq} = 0$.
    \end{enumerate}
\end{lemma}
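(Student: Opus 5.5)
The plan is to read off all four statements from the explicit shape of $\overline{C}$ and $\overline{D}$ given in Equation~\ref{CD}. The two tools are the pairing of digits $y_k$ with $y_{n+1-k}$ and the rainbow pairing guaranteed by Lemma~\ref{pairs} and Lemma~\ref{inpairs}(2). I will argue everything for $M_C$ and obtain the statements for $M_D$ by the symmetric argument.

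\textbf{Step 1: splitting $\overline{C}$ and the zero upper triangle.} Write $n=2\ell+z$. Since $\overline{C}=9_{a+z}8_b7_c6_d5_e4_e3_d2_c1_b0_a$, comparing lengths gives $\ell=a+b+c+d+e$. Hence the last $\ell$ digits of $\overline{C}$ are exactly its digits $\le 4$, and the first $\ell+z$ digits are exactly its digits $\ge 5$. So every corresponding pair $\{y_k,y_{n+1-k}\}$ with $k\le \ell$ has a small entry $s\in\{0,\dots,4\}$ and a large entry $L\in\{5,\dots,9\}$, and it is recorded in row $s+1$, column $10-L$ of $M_C$.

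Next I show that the entries above the diagonal vanish. The rainbow pair through $y_{n+1-k}$ is $\{y_{z+k},y_{n+1-k}\}$, and by Lemma~\ref{pairs} (via Lemma~\ref{inpairs}(2)) it sums to $9$. Since $\overline{C}$ is descending, $y_k\ge y_{z+k}$, so $L\ge 9-s$. This is exactly the condition $j\le i$ for the entry $(i,j)$. In particular every $0$ must pair with a $9$, which gives the entry $a$ in position $(1,1)$ and zeros elsewhere in the first row and column beyond it.

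\textbf{Step 2: statements (1), (2) and (4).} Each of the last $\ell$ digits lies in exactly one corresponding pair, so row $i$ sums to the number of digits equal to $i-1$ in $C$. These are $a,b,c,d,e$, which proves (1).

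For (2), the middle $z$ digits sit in positions $\ell+1,\dots,\ell+z$. These are the last $z$ digits of the block of digits $\ge5$, and they are all equal. If $e>0$, position $\ell+z$ holds a $5$, so all $z$ middle digits are $5$'s and in particular $e\ge z$. Therefore the first $\ell$ digits are $9_{a+z}8_b7_c6_d5_{e-z}$, which yields the stated column sums.

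For (4), I use monotonicity. As $k$ increases, $y_k$ weakly decreases while $y_{n+1-k}$ weakly increases. So two corresponding pairs cannot have one with both a strictly larger small entry and a strictly larger large entry than the other; the same holds with "smaller" in place of "larger". In matrix indices this says that $m_{ij}>0$ forces $m_{pq}=0$ whenever $(p-i)(q-j)<0$.

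\textbf{Step 3: statement (3), the labelling.} This is the step I expect to need the most care. By Equation~\ref{D}, the pair $k$ with difference $\deltaC{k}$ contributes to $D$ the digits $\deltaC{k}$ and $9-\deltaC{k}$ when $1<k<\ell$. The exceptional pairs behave differently: pair $1$ contributes $\deltaC{1}$ and $10-\deltaC{1}$, and pair $\ell$ contributes $\deltaC{\ell}-1$ and $9-\deltaC{\ell}$.

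Because $\deltaC{1}+\deltaC{\ell}=10$ (Lemma~\ref{pairs}), the multiset $\{\deltaC{1},10-\deltaC{1},\deltaC{\ell}-1,9-\deltaC{\ell}\}$ equals $\{\deltaC{1},9-\deltaC{1},\deltaC{\ell},9-\deltaC{\ell}\}$. So as multisets, the $2\ell$ non-middle digits of $D$ are $\bigcup_k\{\deltaC{k},9-\deltaC{k}\}$. The remaining $z$ digits of $D$ are its middle $9$'s.

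An entry at position $(i,j)$ has difference $\delta=(10-j)-(i-1)=11-i-j$, so it produces the digit pair $\{11-i-j,\,i+j-2\}$ in $D$. Comparing with $\overline{D}=9_{a+z}8_f7_g6_h5_i4_i3_h2_g1_f0_a$, the pairs $\{8,1\},\{7,2\},\{6,3\},\{5,4\}$ occur $f,g,h,i$ times respectively. I will check entry by entry that the subscripted labels in $M_C$ follow this rule. For example, $(2,1)$ has $\delta=8$ and is labelled $f$, $(3,2)$ has $\delta=6$ and is labelled $h$, and $(4,3)$ has $\delta=4$ and is labelled $i$. Summing over all entries with a given label then gives (3). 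The symmetric argument with $C$ and $D$ exchanged handles $M_D$.
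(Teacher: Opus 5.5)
Your proposal is correct and follows essentially the same route as the paper: the same digit-position counting for the triangular shape and for (1)--(2), the same nesting argument for (4), and for (3) your multiset re-pairing of the two end pairs is a tidier packaging of the paper's $\PD{i}$/$\pD{}(i)$ indicator cancellation, with both resting on $\deltaC{1}+\deltaC{\ell}=10$ from Lemma~\ref{pairs}. One wording slip: the fact that every $0$ pairs with a $9$ forces zeros only in the rest of the first row, not in the first column below $(1,1)$, whose entries $f_1,g_1,h_1,i_1$ can be nonzero.
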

\begin{proof} 
We prove the statements for $M_C$, as the arguments for $M_D$ are parallel. Because $\overline{C}=9_{a+z}8_b7_c6_d5_e4_e3_d2_c1_b0_a$, when forming the corresponding pairs in $\overline{C}$, the digit 0 pairs only with the digit 9. So, the number of corresponding pairs $\{0,9\}$ is exactly $a$, and the top row of $M_C$ has the entries $a, 0, 0, 0, 0$, respectively. The sum of the first row's entries is $a$, as claimed. 

Again, considering the form of $\overline{C}$ given above, the digit 1 will possibly pair with the digits 9 or 8 when forming the corresponding pairs, but no other digits. Let $f_1$ denote the number of corresponding pairs of the form $\{9,1\}$, and let $g_2$ denote the number of corresponding pairs of the form $\{8,1\}.$ 
It follows that the entries of the second row are $f_1, g_2, 0, 0, 0$, respectively, and the sum of the second row of $M_C$ is $b$, as claimed. Similarly, the row sums are $c, d,$ and $e$, respectively, for the 3rd, 4th, and 5th rows of $M_C$.

Now consider statement (2) of the lemma. Suppose that $e>0$. Recall that the central $z=n-2\ell$ digits of $\overline{C}$ must all be equal, where $\ell = a+b+c+d + e$. Counting inward from the right side of $\overline{C}$, we observe that this center digit must be 5 since $e>0$. Hence $e\geq z$. It must be the case that $e-z$ of the 5's form a corresponding pair with 4, and every digit larger than 5 in $\overline{C}$ forms a corresponding pair with a digit less than or equal to 4. The sum of the first column of $M_C$ counts the number of corresponding pairs of the form $\{9,0\}$, $\{9,1\}$, $\{9,2\}$, $\{9,3\}$, and $\{9,4\}$, which accounts for every appearance of a 9 in $\overline{C}$, so this sum is $a+z$. A similar argument applies to columns 2, 3, and 4 of $M_C$. The final column sum is the number of $\{4,5\}$ corresponding pairs in $M_C$, of which there must be $e-z$.

Next, consider statement (3). The value $b$ represents the number of 1's and the number of 8's in $C$. 
Consider the first $\ell$ digits of $C$ in Equation~\ref{C}. The number of digits in this region equal to 1 is $\PD{1} - \pD{\ell}(1) +\pD{\ell}(2)$ (recalling Definition~\ref{PQ}). In the last $\ell$ digits of $C$, the number of digits equal to 1 is $\PD{8} - \pD{1}(8)+\pD{1}(9)$. Thus, we have:
\[
b= \PD{1}+ \PD{8} - \pD{\ell}(1) +\pD{\ell}(2)  - \pD{1}(8)+\pD{1}(9).
\]
Lemma~\ref{pairs} tells us that $\deltaD{1} + \deltaD{\ell} = 10$, which implies $\pD{1}(j) - \pD{\ell}(10-j) = 0$ for all $j$. Therefore, $b= \PD{1}+ \PD{8} = b_1 + b_2,$ as desired. A similar argument holds for $c, d, e, f, g, h,$ and $i$.

Finally, we consider statement (4) of the lemma. Suppose that two entries $m_{ij}$ and $m_{pq}$ in the matrix $M_C$ are both nonzero. This means that there exist corresponding pairs in $\overline{C}$ of the form $\{i-1,10-j\}$ and $\{p-1,10-q\}$. Note that $1\leq j \leq i\leq 5$ and $1\leq q \leq p\leq 5$ and hence the digits within each pair can be ordered as follows: $i-1\leq 10-j$ and $p-1\leq 10-q$. Because the corresponding pairs of $\overline{C}$ are nested, it must be the case that either
\[
i-1 \leq p-1\leq 10-q \leq 10-j 
\;\;\;\text{ or }\;\;\;
p-1\leq i-1\leq 10-j \leq 10-q.
\]
Regardless of which option holds, the signs of $p-i$ and $q-j$ coincide, forcing $(p-i)(q-j)\geq 0$. Therefore if $(p-i)(q-j)< 0$, it must be the case that one of $m_{ij}$ or $m_{pq}$ equals 0.
\end{proof}
\begin{lemma}\label{formulas}
The entries of the matrices $M_C$ and $M_D$ defined in Lemma~\ref{matrix} satisfy the following two equations:
\begin{equation}\label{1}
    e_3+i_3 = h_1+h_3+d_1+d_3+f_2+b_2 
\end{equation}
\begin{equation}\label{2}
        2h_3+2d_3 = i_2+e_2+g_1+c_1 
\end{equation}
\end{lemma}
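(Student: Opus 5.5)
The plan is to treat both identities as consequences of pure linear bookkeeping. We collect every linear relation among the entries of $M_C$ and $M_D$ that Lemma~\ref{matrix} provides, and then eliminate the variables $a,b,\dots,i,z$ so that only matrix entries remain.

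\emph{Reading off the labels.} First I would make the labelling in Lemma~\ref{matrix} explicit. Each entry of $M_C$ counts corresponding pairs of $\overline{C}$ with a given difference, and its letter records which digit count of $\overline{D}$ it feeds.
\begin{itemize}
\item $f_1$ counts $\{9,1\}$ pairs (difference $8$) and $f_2$ counts $\{4,5\}$ pairs (difference $1$), so $f=\PC{1}+\PC{8}$.
\item $g_1,g_2$ count pairs of difference $7$ and $g_3$ counts pairs of difference $2$.
\item $h_1,h_2$ count pairs of difference $6$ and $h_3,h_4$ count pairs of difference $3$.
\item $i_1,i_2,i_3$ count pairs of difference $5$ and $i_4,i_5$ count pairs of difference $4$.
\end{itemize}
The entries of $M_D$ are labelled the same way with $b,c,d,e$. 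This is exactly the content of Lemma~\ref{matrix}(3), so I will use it freely.

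\emph{The linear system.} The equations available are the following.
\begin{itemize}
\item The five row sums of each matrix, from Lemma~\ref{matrix}(1). For example $c=g_1+h_2+i_3$ and $g=c_1+d_2+e_3$.
\item The five column sums of each matrix, from Lemma~\ref{matrix}(2). For example $c=i_3+i_5+h_3$ and $e-z=f_2$ in $M_C$, and $g=e_3+e_5+d_3$ and $i-z=b_2$ in $M_D$.
\item The decompositions $b=b_1+b_2$, $c=c_1+c_2+c_3$, and so on, from Lemma~\ref{matrix}(3).
\end{itemize}

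\emph{Degenerate case.} Lemma~\ref{matrix}(2) assumes $e>0$ (respectively $i>0$), so I will first check the case $e=0$. Then the middle block of $\overline{C}$ lies strictly between the $6$'s and the $3$'s. Those digits are all paired in corresponding pairs, so the centre block must be empty, and hence $z=0$. With $z=0$ the same column-sum formulas still hold, with last column sum $e-z=0=f_2$. The same argument applies to $i=0$, so all the column-sum equations can be used without case distinctions.

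\emph{Eliminating to reach \eqref{1}.} This is the step I expect to be the main obstacle, because it requires finding the right combination of equations. My first attempt is to compare the two expressions for $c$: the row-3 sum $g_1+h_2+i_3$ against the column-3 sum $i_3+i_5+h_3$, and also against $c_1+c_2+c_3$. I would do the same for $g$ in $M_D$, using the row-3 sum $c_1+d_2+e_3$ and the column-3 sum $e_3+e_5+d_3$. I would then do the same for the $d$/$h$ rows and columns, whose column-4 sums are $h_4+g_3=d$ and $d_4+c_3=h$. Finally I would use the last columns, $f_2=e-z$ and $b_2=i-z$, to bring in $f_2$ and $b_2$, since $z$ cancels in $e-i=f_2-b_2$. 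Adding the row-minus-column identities for rows and columns $2$ through $5$ of both matrices, and substituting the decompositions $d=d_1+d_2+d_3+d_4$, $h=h_1+h_2+h_3+h_4$, and so on, should isolate $e_3+i_3$ on one side against $h_1+h_3+d_1+d_3+f_2+b_2$.

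\emph{Reaching \eqref{2}.} I would use the same strategy with a different weighting, taking twice the $d$- and $h$-balances minus the $c$- and $g$-balances, and aim for $2h_3+2d_3=i_2+e_2+g_1+c_1$. If the pure linear relations turn out not to suffice, the fallback is Lemma~\ref{matrix}(4). The nesting condition forces many products of entries to vanish, for instance $h_3$ and $i_5$ cannot both be positive together with certain entries of row~4. A short case analysis on which of these entries is zero should then close the gap.
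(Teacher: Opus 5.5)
The overall strategy is the same as the paper's: pure linear bookkeeping from Lemma~\ref{matrix}(1)--(3). However, you never carry out the elimination, and the combinations you sketch cannot produce \eqref{1} or \eqref{2}.

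\textbf{The combinations you propose stay inside one matrix.} Once the common letter cancels, a row-minus-column identity involves only one matrix. For $M_C$ these are $f_1=h_2+i_2+i_4$, $g_1+h_2=i_5+h_3$, $h_1+i_2+i_5=g_3$ and $i_1+i_4+h_3+g_3=z$. Their sum over indices $2$ to $5$ just returns the column-1 identity $f_1+g_1+h_1+i_1=z$. Your weighted version (twice the $d$- and $h$-balances minus the $c$- and $g$-balances) is likewise internal. Such identities can never give \eqref{1}, because $i_3$ and $f_2$ cancel out of every internal balance of $M_C$.

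\textbf{The missing idea.} Use Lemma~\ref{matrix}(3) as a bridge between the matrices: equate a \emph{row} of one matrix with the letter decomposition coming from the other.
\begin{itemize}
\item For \eqref{1}, equate row~5 of $M_D$ with $i=\sum_k i_k$ and row~5 of $M_C$ with $e=\sum_k e_k$, and add. After cancelling $e_1,e_4,i_1,i_4$ this gives
\[
d_3+c_3+b_2+h_3+g_3+f_2=i_2+i_3+i_5+e_2+e_3+e_5 .
\]
The index-4 balances $c_3=d_1+e_2+e_5$ and $g_3=h_1+i_2+i_5$ then reduce it to \eqref{1}.
\item For \eqref{2}, equate row~4 of $M_D$ with $h=\sum_k h_k$ and row~4 of $M_C$ with $d=\sum_k d_k$, and add, to get $e_2+e_5+i_2+i_5=h_2+h_3+d_2+d_3$. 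The index-3 balances $e_5=c_1+d_2-d_3$ and $i_5=g_1+h_2-h_3$ then give \eqref{2}.
\end{itemize}
This is, in a different order, exactly the paper's chain of substitutions. Note that $f_2$ and $b_2$ enter through the row-5 sums, not through column~5. Your relation $e-i=f_2-b_2$ produces a difference, whereas \eqref{1} contains $f_2+b_2$. Lemma~\ref{matrix}(4) is not needed at all.

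\textbf{The case $e=0$.} Your disposal of this case is incorrect. The centre block occupies positions $\ell+1,\dots,\ell+z$. Since the right-hand part $3_d2_c1_b0_a$ has only $\ell$ digits, these positions are the last $z$ digits of $9_{a+z}8_b7_c6_d$. Those centre digits are unpaired, so nothing forces $z=0$.

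For example, take $\overline{C}=9_z7_c6_d3_d2_c$ with $d>z>0$, which is precisely the configuration in Case~2 of the proof of Lemma~\ref{45}. Its centre consists of $6$'s, so column~4 of $M_C$ sums to $d-z$ rather than $d$. That breaks the index-4 balance needed for \eqref{1}.

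The correct way out is Lemma~\ref{45}: $e>0$ and $i>0$ for every such 2-cycle, and its proof does not use the present lemma. The paper's computation implicitly relies on this fact when it uses the column sums of Lemma~\ref{matrix}(2).
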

\begin{proof}
These identities follow from considering the different ways to express the variables $a$ through $i$ using the relationships among the matrix entries discussed in Lemma~\ref{matrix}. For Equation~\ref{1}, we start by expressing $h$ and $i$ in different ways:
\begin{align*}
    h + i &= (d_1 + e_2 + e_5 + d_4) + (e_1 + e_4 + d_3 + c_3 + b_2)\\
    (d_4+c_3) + i &= (d_1 + e_2 + e_5 + d_4) + (e_1 + e_4 + d_3 + c_3 + b_2)\\
    i_1 + i_2+ i_3 + i_4+ i_5 &= (e_1 + e_2 + e_4 + e_5) + (d_1 + d_3) + b_2\\
    i_1 + i_2+ i_3 + i_4+ i_5 &= (i_1+i_4+h_3+g_3+f_2-e_3) + (d_1 + d_3) + b_2\\
    d - h_1 - h_4 + i_3  &= (h_3+g_3+f_2-e_3) + (d_1 + d_3) + b_2\\
    d_2+d_4+ i_3  &= h_1 +h_3 +d+ f_2-e_3 + b_2\\    
    e_3 + i_3  &= h_1 +h_3 +d_1+d_3+ f_2 + b_2 
\end{align*}
For Equation~\ref{2}, we begin by expressing the integers $h$ and $g$ in different ways.
\begin{align*}
h + g &= (d_1 + e_2 + e_5 + d_4) + (c_1 + d_2 + e_3)\\  
h + (e_3+e_5+d_3) &= (d_1 + e_2 + e_5 + d_4) + (c_1 + d_2 + e_3)\\  
h + d_3 &= (d_1 + d_2 + d_4) + e_2  +c_1\\  
h + d_3 &= (h_1+i_2+i_5+h_4-d_3) + e_2  +c_1\\  
h_2+h_3 + 2d_3 &= i_2+i_5 + e_2  +c_1\\  
h_3 + 2d_3 &= i_2+(i_5-h_2) + e_2  +c_1\\   
h_3 + 2d_3 &= i_2+(g_1 - h_3) + e_2  +c_1\\   
2h_3 + 2d_3 &= i_2+ e_2  +g_1 +c_1
\end{align*}
The final steps of the above calculation are justified by setting the two possible expressions for $c$ coming from the matrix $M_C$ equal to each other. We have $g_1 + h_2 + i_3 = i_3 + i_5 + h_3$, which simplifies to $i_5-h_2 = g_1 - h_3.$
\end{proof}

\begin{lemma}\label{45}
    If $\{C,D\}$ is a Kaprekar 2-cycle where $C$ and $D$ both contain the same smallest digit, then $C$ and $D$ both contain the digits 4 and 5. That is, $e>0$ and $i>0$. 
\end{lemma}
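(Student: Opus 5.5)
We argue by contradiction, treating $C$ (so $e$) and obtaining $i>0$ by the symmetric argument for $D$. Suppose $e=0$, so that
$$\overline{C}=9_{a+z}8_b7_c6_d3_d2_c1_b0_a .$$
Let $s$ be the common smallest digit. Lemma~\ref{pairs} and the proof of Lemma~\ref{pairs} give $\deltaC{1}=\deltaD{1}=9-s$ and $\deltaC{\ell}=\deltaD{\ell}=s+1$. The plan is to use the innermost corresponding pair $\{y_\ell,y_{n+1-\ell}\}$ of $\overline{C}$, together with the digit list of $C$ from Equation~\ref{C}, to pin down the possible digits. Then we rule out the surviving configurations by counting.

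\textbf{Step 1: the innermost pair.} Since $e=0$ and $n=2\ell+z$, the left block of $\overline{C}$ (digits $\geq 6$) has length $\ell+z$ and the right block (digits $\leq 3$) has length $\ell$. Hence $y_{n+1-\ell}$ is the first digit of the right block. Call it $t$; it is the largest small digit present, and its partner $9-t$ occurs in the left block. Because $y_\ell$ sits at least $z$ places before the end of the left block, we get $y_\ell\geq 9-t$. Therefore
$$s+1=\deltaC{\ell}=y_\ell-t\geq 9-2t .$$
Combining this with $s\leq t\leq 3$ forces $t=3$ and $s\geq 2$. So $a=b=0$, $s\in\{2,3\}$, and the digits of $C$ lie in $\{9,7,6,3,2\}$.

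\textbf{Step 2: eliminate $s=3$.} By Equation~\ref{C}, $C$ contains the digit $10-\deltaD{1}=s+1$. If $s=3$ this digit is $4$, which is impossible since $e=0$. Hence $s=2$. Then $\deltaD{1}=7$ and $\deltaD{\ell}=3$, and every middle difference $\deltaD{j}$ with $1<j<\ell$ must satisfy $\deltaD{j},\,9-\deltaD{j}\in\{9,7,6,3,2\}$. This forces $\deltaD{j}\in\{3,6,7\}$.

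\textbf{Step 3: count in $D$.} We translate the restriction on the $\deltaD{j}$ back into the shape of $\overline{D}=9_{z}8_f7_g6_h5_i4_i3_h2_g1_f$ (recall $a=0$). Middle differences in $\{3,6,7\}$ rule out pairs such as $\{5,4\}$ (difference $1$) or $\{8,1\}$ (difference $7$ would be allowed, but then symmetric-pair sums must match). We then compare the multiplicities $c,d$ with the differences produced by $\overline{D}$: the number of $7$'s and $2$'s in $C$ equals the number of middle differences equal to $7$ or $2$, and similarly for $6$'s and $3$'s. The plan is to use the row and column sum relations of Lemma~\ref{matrix} for $M_D$, which hold once $i>0$, and Lemma~\ref{matrix}(3) for $M_C$ in the degenerate form with $e=0$. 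If $i=0$ as well, the same Step 1--2 reasoning applies to $D$, so both numbers lie in $\{9,7,6,3,2\}$ with $s=2$. In that case $\overline{C}$ and $\overline{D}$ consist only of $\{7,2\}$ and $\{6,3\}$ pairs plus $9$'s. I expect that a direct subtraction $\overline{C}-\underline{C}$ then produces a $4$ or $5$ (the innermost pair $y_\ell-y_{\ell+z+1}=3$ with $9-3=6$, but the outer $\{9,2\}$ or $\{7,2\}$ pair yields digits like $5$), giving the contradiction.

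\textbf{Main obstacle.} I expect Step 3 to be the hardest part: excluding every mixed configuration of $\{6,3\}$ and $\{7,2\}$ pairs with $9$'s once $s=2$. My first attempt would be to compute $D=\overline{C}-\underline{C}$ explicitly for $\overline{C}=9_z7_c6_d3_d2_c$ with $\deltaC{1}=7$ and $\deltaC{\ell}=3$. This subtraction contains the digit $\deltaC{1}=7$ and the digit $\deltaC{\ell}-1=2$, and it exhibits a digit in $\{4,5,8\}$ coming from the differences $9-2=7\Rightarrow 9-7=2$ and $6-2=4$. Such a digit would contradict $D$ having the same restricted digit set, or would force $i>0$ in a way incompatible with Step 1 applied to $D$.
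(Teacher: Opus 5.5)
\paragraph{Verdict.} There is a genuine gap: Steps 1 and 2 are sound, but Step 3, which has to produce the contradiction, is not carried out, and the mechanism you sketch for it does not work.

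\paragraph{Steps 1 and 2.} These are correct, and they are a neat alternative to the paper's case split $(\deltaC{1},\deltaC{\ell})\in\{(6,4),(7,3)\}$. The bound $s+1=\deltaC{\ell}\ge 9-2t$, together with the digit $10-\deltaD{1}=s+1$ of $C$, gives $s=2$, $a=b=0$ and $\overline{C}=9_z7_c6_d3_d2_c$. One small point: to get $\deltaD{j}\in\{3,6,7\}$ rather than $\{2,3,6,7\}$ you need the monotonicity $\deltaD{j}\ge\deltaD{\ell}=3$, so state it.

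\paragraph{Why the Step 3 mechanism fails.} You assumed only $e=0$, so $D$ is free to contain 4's and 5's. In fact it always does here. By $\deltaC{\ell}=3$ the innermost pair of $\overline{C}$ is $\{6,3\}$, so $z,c\ge 1$ and $d>z$. The corresponding pairs of $\overline{C}$ then include $\{7,3\}$ (and also $\{7,2\}$ when $c>z$), so $D=k(C)$ contains 4 and 5. The case you must refute is therefore $e=0$, $i>0$. In that case ``contradicting $D$ having the same restricted digit set'' is unavailable, and ``Step 1 applied to $D$'' does not apply, since Step 1 requires $i=0$. Your digit bookkeeping is also off: $\{9,2\}$ produces the digits 7 and 2, not 5, and a pair $\{6,2\}$ never occurs in $\overline{C}$.

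\paragraph{What is needed.} First record that the innermost pair of $\overline{C}$ is $\{6,3\}$: it is the only element of $\{9,7,6\}\times\{3,2\}$ with difference 3. This gives $z,c\ge1$ and $d>z$. Then finish by either of two routes.
\begin{enumerate}
\item Follow the paper. Write $\overline{D}=9_z7_c6_h5_i4_i3_h2_c$: there are no 0's, 1's or 8's since 2 is the smallest digit of $D$, and the 2's occur equally often by Theorem~\ref{smallest}. Use your own restriction $\deltaD{j}\notin\{4,5\}$ to exclude the pairs $\{9,4\}$, $\{7,2\}$, $\{7,3\}$, $\{6,2\}$ in $\overline{D}$. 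Deduce $z\ge c+h$ and that the innermost pair is $\{7,4\}$, hence $h=0$ and $i=z$. Then $z=c$, and finally $d=z$ by counting digits, contradicting $d>z$.
\item Compute $D=k(C)$ outright.
\begin{enumerate}
\item If $z\ge c$, you get $\overline{D}=9_z7_c6_{d-c}5_c4_c3_{d-c}2_c$. Its $(z+1)$-th corresponding pair (with $1<z+1<\ell$) is $\{7,3\}$, so $\deltaD{z+1}=4$ puts a 4 into $C$.
\item If $z<c$, then $D$ has only $z$ digits equal to 2 against $c$ in $C$, contradicting Theorem~\ref{smallest}.
\end{enumerate}
\end{enumerate}
Without one of these, the proposal establishes only the restricted shape of $\overline{C}$, not the lemma.
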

\begin{proof}
Without loss of generality, suppose for contradiction that $C$ has no 4's or 5's. By Lemma~\ref{pairs}, $\overline{C}=9_{a+z}8_b7_c6_d3_d2_c1_b0_a$ for some nonnegative integers $a, b, c, d,$ and $z$. Recall that the center $z$ digits of $C$ are the same digit. The $a+b+c+d$ digits on either side of this center block of size $z$ form corresponding pairs. 

Thus, the digits in a corresponding pair $\{y_i, y_{n+1-i}\}$ of $\overline{C}$ satisfy $y_i \in \{6, 7, 8, 9\}$ and $y_{n+1-i} \in \{0, 1,2, 3\}$. The values  $\deltaC{i}=y_i-y_{n+1-i}$ are thus restricted: $\deltaC{i} \in \{3,4,5,6,7,8,9\}$.
Recall from Lemma~\ref{pairs} that
$\deltaC{1}+\deltaC{\ell} = 10$. By Lemma~\ref{notallsame}, it is impossible for $\deltaC{1}=\deltaC{\ell}=5$. Thus, there are two possibilities: (1) $\deltaC{1}=6$ and $\deltaC{\ell}=4$ or (2) $\deltaC{1}=7$ and $\deltaC{\ell}=3$.

\textbf{Case 1: } Suppose $\deltaC{1}=6$ and $\deltaC{\ell}=4$. Because $y_1=9$ and $\deltaC{1}=y_1-y_n=6$, it follows that $y_n=3$. Thus $C$ has no digits equal to 0, 1, or 2, which implies $a=b=c=0$ and $\overline{C}=9_{z}6_d3_d$ (Equation~\ref{CD}). Because $\deltaC{\ell}=4$, there must be a corresponding pair in $\overline{C}$ with a difference of 4. There is no such pair, a contradiction.

\textbf{Case 2: } Suppose $\deltaC{1} =7$ and $\deltaC{\ell} = 3$. We have $\deltaC{1}=7 = y_1-y_n = 9-y_n$. So, $y_n=2$. Thus the smallest digit in both $C$ and $D$ is 2. The digit 2 (and, hence, also the digit 7) appears the same number of times in both $C$ and $D$ by Theorem~\ref{smallest}. Moreover, neither $C$ nor $D$ contains the digit 8, since the number of 8's equals the number of 1's, and so $\overline{C}=9_{z}7_c6_d3_d2_c$.

Since $\deltaC{\ell}=3$, there must be a corresponding pair in $\overline{C}$ with difference 3. Because $y_i \in \{6, 7, 9\}$ and $y_{n+1-i} \in \{2, 3\}$, this corresponding pair must be $\{3,6\}$. Collecting these observations, we have:
$\overline{C}=9_z7_c6_d3_d2_c$, where $z$, $c$, and $d$ are strictly positive. Moreover, $d>z$ for otherwise 6 and 3 will not form the innermost corresponding pair.  Also, $\overline{D}=9_z7_c6_h5_i4_i3_h2_c$.

By Lemma~\ref{pairs}, $\deltaD{1}=7$ and $\deltaD{\ell} = 3$. None of the remaining $\deltaD{i}$ are 4 or 5, since that would produce a digit of 4 or 5 in $C$. Thus, there are no corresponding pairs in $\overline{D}$ of the form $\{9,4\}$, $\{7,2\}$, or $\{7,3\}$. Considering that $\overline{D}=9_z7_c6_h5_i4_i3_h2_c$, and the fact that 3 and 2 cannot form corresponding pairs with 7, we are forced to conclude $z\geq c+h$.
Recall $\deltaD{\ell}=3$. Since $z\geq c+h$, only possible corresponding pair in $\overline{D}$ with difference 3 is $\{4,7\}$. Thus $i>0$. All digits between the $\ell^{th}$ corresponding pair $\{4,7\}$ must be equal, and there must be $z$ of them. Since $i>0$, this forces $h=0$ and $i=z$, and 
$\overline{D}=9_z7_c5_z4_z2_c$.
Because there are no $\{9,4\}$ corresponding pairs in $\overline{D}$, $z = c$, and
$\overline{D}=9_z7_z5_z4_z2_z$ and $\overline{C}=9_z7_z6_d3_d2_z$.
Since $C$ and $D$ have the same number of digits, 
$d=z$. This contradicts the fact that $d>z$. 

Therefore, $C$ must have a nonzero number of 4's and 5's.
\end{proof}

We are now poised to prove Theorem~\ref{familyA}, which we now restate.

\begin{manualtheorem}{1.5}    
If $\{C,D\}$ is a Kaprekar 2-cycle where $C$ and $D$ both have the same smallest digit, then $\{\overline{C},\overline{D}\}$ is
    $$ \{9_{z}8_{z-v-u}7_{z+v}6_{z-v}5_z4_z3_{z-v}2_{z+v}1_{z-v-u}, 9_{z}8_{z-v-u}7_{z+u}6_{z-u}5_z4_z3_{z-u}2_{z+u}1_{z-v-u} \} $$
    where $u$, $v$, and $z$ are nonnegative integers such that $z>u+v>0$ and $u\neq v.$
\end{manualtheorem}
\begin{proof}
Any Kaprekar 2-cycle $\{C, D\}$ fits into one of the following four cases (where we swap $C$ and $D$, if necessary) with regard to the entries of the matrices $M_C$ and $M_D$. We show that Case 1(a) produces the Kaprekar 2-cycles in the statement of this theorem, while all other cases are dead ends.
\begin{multicols}{2}
\noindent{ Case 1.} $e_3>0$
\begin{enumerate}
    \item[(a)] $g_2 = 0$ and $c_2=0$
    \item[(b)] $g_2 = 0$ and $c_2 > 0$ 
    \item[(c)] $g_2>0$ 
    \end{enumerate}
\columnbreak
\noindent{ Case 2.} $e_3=0$ and $i_3=0$
\end{multicols}

\noindent {\bf Case 1(a):} $e_3>0$ and $c_2 = g_2 = 0$.

Since $e_3>0$, Lemma~\ref{matrix}(4) implies that $d_1, e_1, e_2, e_4$ are all zero. So the matrices $M_C$ and $M_D$ are:
\[ 
M_C = \begin{bmatrix} 
a &  &  &  & \\ 
f_1 & 0 &  &  & \\ 
g_1 & h_2 & i_3 &  & \\ 
h_1 & i_2 & i_5 & h_4 & \\ 
i_1 & i_4 & h_3 & g_3 & f_2
\end{bmatrix}   ~~~~M_D = \begin{bmatrix} 
a &  &  &  & \\ 
b_1 & 0 &  &  & \\ 
c_1 & d_2 & e_3 &  & \\ 
0 & 0 & e_5 & d_4 & \\ 
0 & 0 & d_3 & c_3 & b_2
\end{bmatrix}
\]

Both here and throughout, we leave the entries above the main diagonal blank, as they are always zero (Lemma~\ref{matrix}).

Since row 2 of $M_D$ sums to $f$, we have $b_1 = f_1 + f_2$. On the other hand, row 2 of $M_C$ sums to $b$, so $f_1 = b_1+b_2$. Therefore, $0 = b_2 + f_2$, from which it follows that $b_2 = f_2 = 0$ and $b_1 = f_1.$ Column 2 of $M_D$ sums to $f$, so $f_1 = d_2$. 

Because $f_2=b_2=0$ and because these are the only entries of the matrices where the associated corresponding pairs have difference 1, we know that $\deltaC{\ell} \neq 1$ and $\deltaD{\ell} \neq 1$. Because $\deltaC{1} + \deltaC{\ell} = 10$ and $\deltaD{1} + \deltaD{\ell} = 10$ (Lemma~\ref{pairs}), it follows that $\deltaC{1} \neq 9$ and $\deltaD{1} \neq 9$. Therefore $a = 0$.

Let $w=b_1=f_1=d_2$. The matrices $M_C$ and $M_D$ are now as follows:
\[
M_C = \begin{bmatrix} 
0 &  &  &  & \\ 
w & 0 &  &  & \\ 
g_1 & h_2 & i_3 &  & \\ 
h_1 & i_2 & i_5 & h_4 & \\ 
i_1 & i_4 & h_3 & g_3 & 0
\end{bmatrix}   
~~~~M_D = \begin{bmatrix} 
0 &  &  &  & \\ 
w & 0 &  &  & \\ 
c_1 & w & e_3 &  & \\ 
0 & 0 & e_5 & d_4 & \\ 
0 & 0 & d_3 & c_3 & 0
\end{bmatrix}
\]
Because column 5 of $M_D$ sums to $i-z$, we have $i=z$. Similarly, $e=z$. From this, we conclude $e_5 = z-e_3$.  Also, because column 1 of $M_D$ sums to $z$, $c_1 = z-w$.
\[ 
M_C = \begin{bmatrix} 
0 &  &  &  & \\ 
w & 0 &  &  & \\ 
g_1 & h_2 & i_3 &  & \\ 
h_1 & i_2 & i_5 & h_4 & \\ 
i_1 & i_4 & h_3 & g_3 & 0
\end{bmatrix}   
~~~~M_D = \begin{bmatrix} 
0 &  &  &  & \\ 
w & 0 &  &  & \\ 
\mathsmaller{z-w} & w & e_3 &  & \\ 
0 & 0 & \mathsmaller{z-e_3} & d_4 & \\ 
0 & 0 & d_3 & c_3 & 0
\end{bmatrix}
\]
Because row 3 and column 3 of $M_D$ have the same entry sum, it follows that $d_3 = e_3$. Since $d_3 = e_3>0$, we conclude that $d_4=0$ by Lemma~\ref{matrix}(4). Moreover, since row 5 of $M_D$ has a sum of $z$, we conclude that $c_3 = z-e_3$. Setting $e_3 = u$, we have: 
\[ 
M_C = \begin{bmatrix} 
0 &  &  &  & \\ 
w & 0 &  &  & \\ 
g_1 & h_2 & i_3 &  & \\ 
h_1 & i_2 & i_5 & h_4 & \\ 
i_1 & i_4 & h_3 & g_3 & 0
\end{bmatrix}   
~~~~M_D = \begin{bmatrix} 
0 &  &  &  & \\ 
w & 0 &  &  & \\ 
\mathsmaller{z-w} & w & u &  & \\ 
0 & 0 & \mathsmaller{z-u} & 0 & \\ 
0 & 0 & u & \mathsmaller{z-u} & 0
\end{bmatrix}
\]
Next, we claim that $h_1=0$ and $h_3 = i_3$. For, if $i_3=0$, Equation~\ref{1} simplifies to show that $h_1 = h_3 = 0.$ On the other hand, if $i_3>0$, then Lemma~\ref{matrix}(4) implies $h_1=0$, and Equation~\ref{1} simplifies to show that $h_3=i_3$. 

Column 3 of $M_C$ also sums to $c$, which implies that $i_5 = 2z-w-u-2i_3.$ Denoting $i_3$ by $v$, the matrices are:
\[ 
M_C = \begin{bmatrix} 
0 &  &  &  & \\ 
w & 0 &  &  & \\ 
g_1 & h_2 & v &  & \\ 
0 & i_2 & \mathsmaller{2z-w-u-2v} & h_4 & \\ 
i_1 & i_4 & v & g_3 & 0
\end{bmatrix}   
~~~~M_D = \begin{bmatrix} 
0 &  &  &  & \\ 
w & 0 &  &  & \\ 
\mathsmaller{z-w} & w & u &  & \\ 
0 & 0 & \mathsmaller{z-u} & 0 & \\ 
0 & 0 & u & \mathsmaller{z-u} & 0
\end{bmatrix}
\]

Next we claim that $i_1=i_2 = i_4 = 0$. We will prove this one entry at a time. Note that if any of these three entries are non-zero, then $v=0$. Suppose $i_2>0$. By Lemma~\ref{matrix}(4), $i_1 = 0$, which in turn forces $g_1 = z-w$ and $h_2 = z-u$. Equation~\ref{2} simplifies to $i_2 = 2(u+w-z)$, while the fact that column 2 of $M_C$ sums to $w$ implies that $i_4 = z-u-w$. Observe that $i_2 = -2i_4$, which contradicts the assumptions that $i_2>0$ and $i_4\geq0$. Therefore, $i_2 = 0$, and we also obtain from Equation~\ref{2} that $g_1 = 2u+2v+w-z$. Next, suppose that $i_1>0$. Then since column 1 of $M_C$ sums to $z$, we conclude that $i_1 = 2(z-u-w).$ On the other hand, adding $i_k$ for all $k$ should result in $z$, which forces $i_4 = 3(w+u-z)$. Observe that $i_1 = -\frac{2}{3}i_4$. This contradicts the assumption that $i_1>0$ and  $i_4\geq0$.  Therefore, it must be that $i_1=0$. Finally, suppose that $i_4>0$. Then by Lemma~\ref{matrix}(4), we have $i_5 = 2z-w-u=0$ and $h_4 = 0$. Thus, every entry in Row 4 of $M_C$ is 0, but the row should sum to $d = w+u>0$, a contradiction. Therefore, $i_4=0$, and we have:
\[
M_C = \begin{bmatrix}
0 &  &  &  & \\ 
w & 0 &  &  & \\ 
\mathsmaller{2u+2v+w-z} & h_2 & v &  & \\ 
0 & 0 & \mathsmaller{2z-w-u-2v} & h_4 & \\ 
0 & 0 & v & g_3 & 0
\end{bmatrix}  
\; 
M_D = \begin{bmatrix} 
0 &  &  &  & \\ 
w & 0 &  &  & \\ 
\mathsmaller{z-w} & w & u &  & \\ 
0 & 0 & \mathsmaller{z-u} & 0 & \\ 
0 & 0 & u & \mathsmaller{z-u} & 0
\end{bmatrix}
\]
Only a few steps remain. Row 5 of $M_C$ sums to $z$, therefore $g_3 = z-v$. Column 4 and Row 4 of $M_C$ have the same sum, so $w = z-v-u$. Column 4 of $M_C$ sums to $d=w+u$, thus $h_4 = w+u+v-z=0$. Column 2 of $M_C$ sums to $b=w$, so $h_2=w$. Replacing $w$ with $z-v-u$, we have:
\setlength\arraycolsep{4pt}
\[
M_C = \begin{bmatrix}
0 &  &  &  & \\ 
\mathsmaller{z-v-u} & 0 &  &  & \\ 
\mathsmaller{u+v} & \mathsmaller{z-v-u} & v &  & \\ 
0 & 0 & \mathsmaller{z-v} & 0 & \\ 
0 & 0 & v & \mathsmaller{z-v} & 0
\end{bmatrix}   
\;
M_D = \begin{bmatrix}
0 &  &  &  & \\ 
\mathsmaller{z-v-u} & 0 &  &  & \\ 
\mathsmaller{u+v} & \mathsmaller{z-v-u} & u &  & \\ 
0 & 0 & \mathsmaller{z-u} & 0 & \\ 
0 & 0 & u & \mathsmaller{z-u} & 0
\end{bmatrix}
\]
Therefore,
$$\overline{C} = 9_{z}8_{z-v-u}7_{z+v}6_{z-v}5_z4_z3_{z-v}2_{z+v}1_{z-v-u}$$
$$\overline{D} = 9_{z}8_{z-v-u}7_{z+u}6_{z-u}5_z4_z3_{z-u}2_{z+u}1_{z-v-u} $$
where $u>0$, $v\geq0$. We must also require $u\neq v$, for otherwise $C=D$, and we have a Kaprekar constant rather than a 2-cycle. Moreover, notice that if $z = u+v,$ then it would follow that $C$ and $D$ do not contain the digit 1, in which case the smallest digit in both $C$ and $D$ is 2. By Theorem~\ref{smallest}, it follows that $C$ and $D$ contain the same number of digits equal to 2. Hence $u=v$, and once again we have a Kaprekar constant rather than a 2-cycle. Therefore $z>u+v$.\\
\newpage
{\bf Case 1(b):} $e_3>0$, $c_2>0$, and $g_2 = 0$.

Notice that if $i_3>0,$ then the matrices fit into Case 1(c), except with the roles of $C$ and $D$ reversed. Therefore, we may assume that $i_3 = 0$. Because $e_3$ and $c_2$ are both positive, Lemma~\ref{matrix}(4) implies that $c_1$, $d_1$, $e_1$, $e_2$, and $e_4$ are all 0. So, the matrices are as follows:
\[
M_C = \begin{bmatrix}
a & \x & \x & \x & \x\\ 
f_1 & 0 & \x & \x & \x\\ 
g_1 & h_2 & 0 & \x & \x\\ 
h_1 & i_2 & i_5 & h_4 & \x\\ 
i_1 & i_4 & h_3 & g_3 & f_2
\end{bmatrix}  
\;\; 
M_D = \begin{bmatrix}
a & \x & \x & \x & \x\\ 
b_1 & c_2 & \x & \x & \x\\ 
0 & d_2 & e_3 & \x & \x\\ 
0 & 0 & e_5 & d_4 & \x\\ 
0 & 0 & d_3 & c_3 & b_2
\end{bmatrix}
\] 
Column 1 of $M_D$ sums to $a+z$, so then $b_1 = z$. The sum of the entries in Column $i$  of $M_D$ and that of Row $i$ of $M_D$ are the same sum for $i=2,3,$ and 4, respectively. For each respective value of $i$, we conclude the following equations. For $i=2$, $d_2 = z$, and for $i=3$,  $e_5+d_3 = z$, and for $i=4$, $c_3 = e_5$.

Starting with Equation~\ref{2}, we have
\begin{align*}
    e_3 + i_3 &= h_1 + h_3 + d_1 + d_3 + f_2 + b_2\\
    e_3+0 &= h_1 + h_3 + 0 + (z-e_5) + (e-z) + b_2\\
    e_3+e_5 &= h_1 + h_3 + z + (e-z) + b_2\\
    e &= h_1 + h_3 + z + (e-z) + b_2\\
    0 &= h_1 + h_3 + b_2
\end{align*}
Therefore, $h_1=h_3=b_2=0$.
\[
M_C = \begin{bmatrix}
a & \x & \x & \x & \x\\ 
f_1 & 0 & \x & \x & \x\\ 
g_1 & h_2 & 0 & \x & \x\\ 
0 & i_2 & i_5 & h_4 & \x\\ 
i_1 & i_4 & 0 & g_3 & f_2
\end{bmatrix}  
\;\; 
M_D = \begin{bmatrix}
a & \x & \x & \x & \x\\ 
z & c_2 & \x & \x & \x\\ 
0 & z & e_3 & \x & \x\\ 
0 & 0 & e_5 & d_4 & \x\\ 
0 & 0 & \mathsmaller{z-e_5} & e_5 & 0
\end{bmatrix}
\] 
Because row 2 of $M_C$ sums to $b = b_1 + b_2 =  z$, we conclude that $f_1 = z$. Now using the fact that Column 1 of $M_C$ sums to $a+z$, it follows that $g_1=i_1=0$. 
\[
M_C = \begin{bmatrix}
a & \x & \x & \x & \x\\ 
z & 0 & \x & \x & \x\\ 
0 & h_2 & 0 & \x & \x\\ 
0 & i_2 & i_5 & h_4 & \x\\ 
0 & i_4 & 0 & g_3 & f_2
\end{bmatrix}  
\;\; 
M_D = \begin{bmatrix}
a & \x & \x & \x & \x\\ 
z & c_2 & \x & \x & \x\\ 
0 & z & e_3 & \x & \x\\ 
0 & 0 & e_5 & d_4 & \x\\ 
0 & 0 & \mathsmaller{z-e_5} & e_5 & 0
\end{bmatrix}
\] 
Row 2 of $M_D$ should sum to $f = f_1+f_2 = z + f_2$. This forces $f_2 = c_2$.
Next, the sum of Row 3 of $M_D$ should be $g = g_1 + g_2 + g_3 = 0+0+g_3$. Therefore $z+e_3 = g_3$. 

The sum of the bottom row of $M_C$ is then $i_4 + z+e_3 + c_2$, but on the other hand, the sum should be $e_1 + e_2 + e_3 + e_4 + e_5 = e_3+e_5$. Therefore $e_5 = i_4 + z + c_2$. We conclude $e_5 - z = i_4 + c_2\geq 0$, but we also have a matrix entry of $z - e_5\geq 0$. Therefore, it follows that $i_4$ and $c_2$ are both 0. This is a contradiction, because we assumed $c_2 >0$.\\

\newpage
{\bf Case 1(c):} $e_3>0$ and $ g_2 > 0$.

Since $e_3$ and $g_2$ are positive, Lemma~\ref{matrix}(4) implies that $d_1$, $e_1$, $e_2$, $e_4$, $g_1$, $h_1$, $i_1$ are all 0, and our matrices are as follows.
\[ 
M_C = 
\begin{bmatrix} 
a & \\ 
f_1 & g_2 & \\ 
0 & h_2 & i_3 & \\ 
0 & i_2 & i_5 & h_4 & \\ 
0 & i_4 & h_3 & g_3 & f_2
\end{bmatrix} 
\;
M_D = 
\begin{bmatrix} 
a & \\ 
b_1 & c_2 & \\ 
c_1 & d_2 & e_3 &\\ 
0 & 0 & e_5 & d_4 & \\ 
0 & 0 & d_3 & c_3 & b_2
\end{bmatrix}
\]
Observing the first column of $M_C$, we conclude $f_1=z$. Since $f=b_1+c_2$, we have
\begin{align*}
    b_1 + c_2 &= z + f_2 \\
    b_1 + c_1 + c_2 &= z + f_2 + c_1 \\
    z + c_2 &= z + f_2 + c_1 \\
    c_2 &= f_2 + c_1 
\end{align*}
From this relationship, we see that if $c_1>0$, then $c_2>0$, contradicting Lemma~\ref{matrix}(4). Thus $c_1=0$ and, observing column 1 of $M_D$, $b_1=z$. The entries of row 2 and column 2 of $M_D$, respectively, have the same sum. Thus $d_2=z$. This leaves the matrices as follows. 
\[
M_C = 
\begin{bmatrix} 
a & \\ 
z & g_2 & \\ 
0 & h_2 & i_3 & \\ 
0 & i_2 & i_5 & h_4 & \\ 
0 & i_4 & h_3 & g_3 & f_2
\end{bmatrix}   
\;
M_D = 
\begin{bmatrix} 
a & \\ 
z & c_2 &\\ 
0 & z & e_3 & \\ 
0 & 0 & e_5 & d_4 & \\ 
0 & 0 & d_3 & c_3 & b_2
\end{bmatrix}
\]
Row 2 of $M_C$ sums to $b$. Therefore we have $g_2 = b_2$. Similarly, considering row 2 of $M_D$, we conclude $c_2 = f_2.$ The fact that row 3 and column 3 of $M_D$ have the same sum implies that $e_5+d_3 = z$. The fact that row 4 and column 4 of $M_D$ have the same sum implies that $e_5=c_3.$ Therefore $d_3 = z-c_3$.

Suppose $i_2>0$ (we will show that this causes a contradiction). This implies $i_3=0$ by Lemma~\ref{matrix}(4), and Equation~\ref{1} yields: 
     $e_3 = h_3+z-c_3+c_2+g_2.$
Beginning with $c_2=f_2=e-z$ (from the (5,5) entry of $M_C$), we use this expression for $e_3$ in the following:
\begin{align*}
    c_2+z &= e \\
    c_2+z&= e_1+e_2+e_3+e_4+e_5 \\
    c_2+z&= 0+0+(h_3+z-c_3+c_2+g_2)+0 +c_3 \\
    0&= h_3+g_2
\end{align*}
But one of our original assumptions was that $g_2>0$, so this is a contradiction. Therefore, our supposition that $i_2>0$ was incorrect. It must be that $i_2=0$. Then Equation~\ref{2} becomes $2h_3+2d_3=0$. We conclude $h_3=d_3=0$. Since $d_3=z-c_3$, $c_3=z$. Our matrices are as follows:
\[ 
M_C = \begin{bmatrix} 
a & \\ 
z & g_2 & \\ 
0 & h_2 & i_3 &  \\ 
0 & 0 & i_5 & h_4 & \\ 
0 & i_4 & 0 & g_3 & c_2
\end{bmatrix}   
\; M_D = 
\begin{bmatrix} 
a &  \\ 
z & c_2 &  \\ 
0 & z & e_3 &  \\ 
0 & 0 & z & d_4 & \\ 
0 & 0 & 0 & z & g_2
\end{bmatrix}
\]
Since column 3 and row 3 of $M_C$ have the same sum, $h_2 = i_5$. 
Because column 4 and row 4 of $M_C$ have the same sum, $h_2=i_5=g_3$. 
Finally, since column 2 and row 2 of $M_C$ have the same sum, $z= h_2 + i_4$

Combining $i_5=h_2$ and $z=h_2+i_4$, we conclude $i=i_3+z$. On the other hand,  row 5 of $M_D$ should also have a sum of $i$. Therefore, $i_3 = g_2$. A parallel argument establishes $c_2=e_3$. 

Since $g_2>0$, we know $i_3>0$. Lemma~\ref{matrix}(4) implies $i_4=0$, so $h_2=z$. The matrices are then: 
\[ 
M_C = 
\begin{bmatrix} 
a &  \\ 
z & g_2 &  \\ 
0 & z & g_2 &  \\ 
0 & 0 & z & h_4 & \\ 
0 & 0 & 0 & z & c_2
\end{bmatrix}   
\;
M_D = 
\begin{bmatrix} 
a &  \\ 
z & c_2 &  \\ 
0 & z & c_2 & \\ 
0 & 0 & z & d_4 & \\ 
0 & 0 & 0 & z & g_2
\end{bmatrix}
\]
Observe that the entries of the matrices coincide on all entries off of their main diagonals. We will show that the main diagonal entries coincide as well. On one hand, if we add all the entries $d_j$ in $M_D$, we find $d=z+d_4$. On the other hand, row 4 of $M_C$ also has a sum of $d$, so $d_4=h_4$. Similarly, if we add all the entries $c_j$ in $M_D$, we have $c=z+c_2$. On the other hand, the entries of row 3 of $M_C$ sum to $c$. Therefore, $g_2=c_2$. This proves that $M_C=M_D$, and thus $C=D$, so we have a Kaprekar constant, rather than a 2-cycle. \\

\noindent {\bf Case 2:} $e_3=0$ and $i_3= 0$.

Setting $e_3=0$ and $i_3= 0$ in Equation~\ref{1}, we conclude $h_1, h_3, d_1, d_3, f_2,$ and $b_2$ are all 0. Next, setting $h_3=0$ and $d_3=0$ in Equation~\ref{2}, we conclude $i_2, e_2, g_1,$ and $c_1$ are all 0. The matrices are then given below. 
\[
M_C = 
\begin{bmatrix}
a & \x & \x & \x & \x\\ 
f_1 & g_2 & \x & \x & \x\\ 
0 & h_2 & 0 & \x & \x\\ 
0 & 0 & i_5 & h_4 & \x\\ 
i_1 & i_4 & 0 & g_3 & 0
\end{bmatrix}  
\;\; 
M_D = 
\begin{bmatrix}
a & \x & \x & \x & \x\\ 
b_1 & c_2 & \x & \x & \x\\ 
0 & d_2 & 0 & \x & \x\\ 
0 & 0 & e_5 & d_4 & \x\\ 
e_1 & e_4 & 0 & c_3 & 0
\end{bmatrix}
\] 
The entries of row 3 and the entries column 3 of $M_C$ each sum to $c$, so $h_2=i_5=c$. Similarly, the entries of row 3 and column 3 of $M_D$ each sum to $g$, so $d_2=e_5=g$.  

Next, row 4 of $M_C$ sums up to $d$ so then $h_4=d-c$, and row 4 of $M_D$ sums up to $h$, so then $d_4=h-g$. Then, since $f_2=0$, and $f=f_1+f_2$, we know $f=f_1$, and by the same logic, $b_1=b$. 

Because row 2 of $M_C$ sums up to $b$, we know $g_2=b-f\geq 0$. As row 2 of $M_D$ sums to $f$, $c_2=f-b \geq 0$. Therefore $f=b$ and $g_2=c_2=0$. This forces $g_3 = g$ and $c_3 = c$. The updated matrices are as follows:
\[
M_C = 
\begin{bmatrix}
a & \x & \x & \x & \x\\ 
f & 0 & \x & \x & \x\\ 
0 & c & 0 & \x & \x\\ 
0 & 0 & c & \mathsmaller{d-c} & \x\\ 
i_1 & i_4 & 0 & g & 0
\end{bmatrix}  
\;\; 
M_D = 
\begin{bmatrix}
a & \x & \x & \x & \x\\ 
f & 0 & \x & \x & \x\\ 
0 & g & 0 & \x & \x\\ 
0 & 0 & g & \mathsmaller{h-g} & \x\\ 
e_1 & e_4 & 0 & c & 0
\end{bmatrix}
\] 
Column 2 of $M_C$ sums to $b$, so $i_4=b-c = f-c$. Column 2 of $M_D$ sums to $f$ so $e_4=f-g$.  Next, column 1 of $M_C$ sums to $a+z$, 
so $i_1=z-f$. Similarly, $e_1=z-f$. 
The entries in the fourth column of $M_D$ sum to $h$, so $g=c.$ The updated matrices are as follows:
\[
M_C = \begin{bmatrix}
a & \x & \x & \x & \x\\ 
f & 0 & \x & \x & \x\\ 
0 & c & 0 & \x & \x\\ 
0 & 0 & c & \mathsmaller{d-c} & \x\\ 
\mathsmaller{z-f} & \mathsmaller{f-c} & 0 & c & 0
\end{bmatrix}  
\;\; 
M_D = \begin{bmatrix}
a & \x & \x & \x & \x\\ 
f & 0 & \x & \x & \x\\ 
0 & c & 0 & \x & \x\\ 
0 & 0 & c & \mathsmaller{h-c} & \x\\ 
\mathsmaller{z-f} & \mathsmaller{f-c} & 0 & c & 0
\end{bmatrix}
\] 
Finally, $d = d_1+d_2+d_3+d_4 = 0 + c + 0 + h-c= h$. We conclude $M_C=M_D$, which implies $C=D$, and we have a Kaprekar constant rather than a 2-cycle. 
\end{proof}

\section{Conclusion}
As mentioned in the introduction, Dolan~\cite{dolan0} has since found the full classification of Kaprekar 2-cycles. He proved that all Kaprekar 2-cycles are exactly those listed in Theorem~\ref{MainResult}. We mention some other interesting (but as of yet unfinished) storylines here. For a broad overview of related results and some open questions, we recommend~\cite{devlinzeng} and~\cite{Yamagami}. 

In our discussion, we focused on the Kaprekar process with numbers written in base 10. The Kaprekar process has also been studied with respect to numbers written in any base $b$. For example, Eldridge-Sagong~\cite{eldridge} proved that for each {\em odd} base $b$, there exists a Kaprekar 2-cycle, namely the following pair of 3-digit numbers: 
$$C=\left(\frac{b-3}{2}\right)(b-1)\left(\frac{b+1}{2}\right)~~ \text{ and } ~~D=\left(\frac{b-1}{2}\right)(b-1)\left(\frac{b+1}{2}\right).$$
Eldridge-Sagong proved that every 3-digit number written in an odd base $b>3$ will reach the above 2-cycle in at most $\frac{b+1}{2}$ steps of the Kaprekar process (1 step for $b=3$). 

Efforts to classify Kaprekar $n$-cycles for $n>2$ both in  base 10 and other bases include~\cite{dolan4, dolan3, eldridge, trigg}. Computer testing of numbers up to 140 digits long in base 10 reveals Kaprekar cycles of length 1, 2, 3, 4, 5, 7, 8 and 14~\cite{oeis}. This is an indication that a complete classification of all Kaprekar cycles in base 10 is within reach. 

\bibliographystyle{plain}
\bibliography{biblio}

\end{document}